\documentclass[a4paper, 10pt]{amsart}
\textwidth16.1cm \textheight21cm \oddsidemargin-0.1cm
\evensidemargin-0.1cm
\usepackage{amsmath}
\usepackage{amssymb, color, hyperref}
\usepackage{amsfonts}

\theoremstyle{plain}
\newtheorem{theorem}{\bf Theorem}[section]
\newtheorem{proposition}[theorem]{\bf Proposition}
\newtheorem{lemma}[theorem]{\bf Lemma}

\theoremstyle{definition}

\newcommand{\N}{\mathbb N}
\newcommand{\Z}{\mathbb Z}

\renewcommand{\t}{\, | \,}
\newcommand{\und}{\;\mbox{ and }\;}

\newcommand{\la}{\langle}
\newcommand{\ra}{\rangle}

 \DeclareMathOperator{\ord}{ord}

 \DeclareMathOperator{\supp}{supp}

\newcommand{\bdot}{\boldsymbol{\cdot}}

\newcommand{\blfloor}{\big\lfloor}
\newcommand{\brfloor}{\big\rfloor}

\numberwithin{equation}{section}

\begin{document}

\title[minimal product-one sequences of maximal length]{On minimal product-one sequences of maximal length \\ over Dihedral and Dicyclic groups}

\author{Jun Seok Oh and Qinghai Zhong}
\address{Institute for Mathematics and Scientific Computing \\ University of Graz, NAWI Graz \\ Heinrichstra{\ss}e 36 \\ 8010 Graz, Austria}
\email{junseok.oh@uni-graz.at, qinghai.zhong@uni-graz.at}
\urladdr{https://imsc.uni-graz.at/zhong/}

\subjclass[2010]{20D60, 20M13, 11B75, 11P70}
\keywords{product-one sequences, Davenport constant, Dihedral groups, Dicyclic groups, sets of lengths, unions of sets of lengths}

\thanks{This work was supported by the Austrian Science Fund FWF, W1230 Doctoral Program Discrete Mathematics and Project No. P28864--N35.}

\begin{abstract}
Let $G$ be a finite group. By a sequence over $G$, we mean a finite unordered sequence of terms from $G$, where repetition is allowed, and we say that it is a product-one sequence if its terms can be ordered such that their product equals the identity element of $G$. The large Davenport constant $\mathsf D (G)$ is the maximal length of a minimal product-one sequence, that is, a product-one sequence which cannot be factored into two non-trivial product-one subsequences. We provide explicit characterizations of all minimal product-one sequences of length $\mathsf D (G)$ over Dihedral and Dicyclic groups. Based on these characterizations we study the unions of sets of lengths of the monoid of product-one sequences over these groups.
\end{abstract}

\maketitle


\bigskip
\section{Introduction} \label{1}
\bigskip

Let $G$ be a finite group. A sequence $S$ over $G$ means a finite sequence of terms from $G$ which is unordered, repetition of terms allowed. We say that $S$ is a product-one sequence if its terms can be ordered so that their product equals the identity element of the group. The {\it small Davenport constant $\mathsf d (G)$} is the maximal integer $\ell$ such that there is a sequence of length $\ell$ which has no non-trivial product-one subsequence. The {\it large Davenport constant $\mathsf D (G)$} is the maximal length of a minimal product-one sequence (this is a product-one sequence which cannot be factored into two non-trivial product-one subsequences). We have $1 + \mathsf d (G) \le \mathsf D (G)$ and equality holds if $G$ is abelian. The study of the Davenport constant of finite abelian groups has been a central topic in zero-sum theory since the 1960s (see \cite{Ga-Ge06b} for a survey). Both the direct problem, asking for the precise value of the Davenport constant in terms of the group invariants, as well as the associated inverse problem, asking for the structure of extremal sequences, have received wide attention in the literature. We refer to \cite{Ga-Ge-Sc07a,Sc10b,Ga-Ge-Gr10a, Sc11b, Sa-Ch14a, Gi18a, Gi-Sc19a, Gi-Sc19b} for progress with respect to the direct and to the inverse problem. Much of this research was stimulated by and applied to factorization theory and we refer to \cite{Ge-HK06a,Ge09a} for more information on this interplay.

Applications to invariant theory (in particular, the relationship of the small and large Davenport constants with the Noether number, see \cite{Cz-Do14a, Cz-Do13c, Cz-Do-Ge16a, Cz-Do-Sz18, Cz19a, Ha-Zh19a}) pushed forward the study of the Davenport constants for finite non-abelian groups. Geroldinger and Grynkiewicz (\cite{Ge-Gr13a, Gr13b}) studied the small and the large Davenport constant of non-abelian groups and derived their precise values for groups having a cyclic index $2$ subgroup. Brochero Mart\'\i nez and Ribas (\cite{Br-Ri18a,Br-Ri19a}) determined, among others, the structure of product-one free sequences of length $\mathsf d (G)$ over Dihedral and Dicyclic groups.

In this paper we establish a characterization of the structure of minimal product-one sequences of length $\mathsf D (G)$ over Dihedral and Dicyclic groups (Theorems \ref{4.1}, \ref{4.2}, and \ref{4.3}). It turns out that this  problem is quite different from the study of product-one free sequence done by Brochero Mart\'\i nez and Ribas. The minimal product-one sequences over $G$ are the atoms (irreducible elements) of the monoid $\mathcal B (G)$ of all product-one sequences over $G$. Algebraic and arithmetic properties of $\mathcal B (G)$ were recently studied in \cite{Oh18a, Oh19a}. Based on our characterization results of minimal product-one sequences of length $\mathsf D (G)$ we give a description of all unions of sets of lengths of $\mathcal B (G)$ (Theorems \ref{5.4} and \ref{5.5}).

We proceed as follows. In Section \ref{2}, we fix our notation and gather the required tools. In Section \ref{3}, we study  the structure of minimal product-one sequences fulfilling certain requirements on their length and their support (Propositions \ref{3.2} and \ref{3.3}). Based on these preparatory results, we establish an explicit characterization  of all minimal product-one sequences having length $\mathsf D (G)$ for Dihedral groups (Theorems \ref{4.1} and \ref{4.2}) and for Dicyclic groups (Theorem \ref{4.3}). Our results on unions of sets of lengths are given in Section \ref{5}.


\bigskip
\section{Preliminaries} \label{2}
\bigskip

We denote by $\N$ the set of positive integers and we set $\N_0 = \N \cup \{ 0 \}$. For each $k \in \N$, we also denote by $\N_{\ge k}$ the set of positive integers greater than or equal to $k$. For integers $a, b \in \Z$, $[a,b] = \{x \in \Z \mid a \le x \le b\}$ is the discrete interval.

\smallskip
\noindent
{\bf Groups.} Let $G$ be a multiplicatively written finite group with identity element $1_G$. For an element $g \in G$, we denote by $\ord (g) \in \N$ the order of $g$, and for subsets $A, B \subset G$, we set
\[
  AB \, = \, \{ ab \t a \in A \und b \in B \} \quad \und \quad gA \, = \, \{ ga \t a \in A \} \,.
\]
If $G_0 \subset G$ is a non-empty subset, then we denote by $\la G_0 \ra \subset G$ the subgroup generated by $G_0$, and by $\mathsf H (G_0) = \{ g \in G \t gG_0 = G_0 \}$ the left {\it stabilizer} of $G_0$. Then $\mathsf H (G_0) \subset G$ is a subgroup, and $G_0$ is a union of right $\mathsf H (G_0)$-cosets. Of course, if $G$ is abelian, then we do not need to differentiate between left and right stabilizers and simply speak of the stabilizer of $G_0$, and when $G$ is written additively, we have that $\mathsf H (G_0) = \{ g \in G \t g + G_0 = G_0 \}$. Furthermore, for every $n \in \N$ and for a subgroup $H \subset G$, we denote by
\begin{itemize}
\item $[G : H]$ the {\it index} of $H$ in $G$,

\smallskip
\item $\phi_H : G \to G/H$ the canonical epimorphism if $H \subset G$ is normal,

\smallskip
\item $C_n$ an (additively written) {\it cyclic group} of order $n$,

\smallskip
\item $D_{2n}$ a {\it dihedral group} of order $2n$, and by

\smallskip
\item $Q_{4n}$ a {\it dicyclic group} of order $4n$.
\end{itemize}

\smallskip
\noindent
{\bf Sequences over groups.} Let $G$ be a finite group with identity element $1_G$ and $G_0 \subset G$ a subset. The elements of the free abelian monoid $\mathcal F (G_0)$ will be called  {\it sequences} over $G_0$.  This terminology goes back to Combinatorial Number Theory. Indeed, a sequence over $G_0$ can be viewed as a finite unordered sequence of terms from $G_0$, where the repetition of elements is allowed. We briefly discuss our notation which follows  the monograph \cite[Chapter 10.1]{Gr13a}. In order to avoid confusion between multiplication in $G$ and multiplication in $\mathcal F (G_0)$, we denote multiplication in $\mathcal F (G_0)$ by the boldsymbol $\bdot$ and we use brackets for all exponentiation in $\mathcal F (G_0)$. In particular, a sequence $S \in \mathcal F (G_0)$ has the form
\begin{equation} \label{basic}
S \, = \, g_1 \bdot \ldots \bdot g_{\ell} \, = \, {\small \prod}^{\bullet}_{i \in [1,\ell]} \, g_i \, \in \, \mathcal F (G_0),
\end{equation}
where $g_1, \ldots, g_{\ell} \in G_0$ are the terms of $S$. For $g \in G_0$,
\begin{itemize}
\item $\mathsf v_g (S) = | \{ i \in [1,\ell] \t g_i = g \} |$ denotes the {\it multiplicity} of $g$ in $S$,

\smallskip
\item $\supp(S) = \{ g \in G_0 \t \mathsf v_{g} (S) > 0 \}$ denotes the {\it support} of $S$, and

\smallskip
\item $\mathsf h (S) = \max \{ \mathsf v_g (S) \t g \in G_0 \}$ denotes the {\it maximal multiplicity} of $S$.
\end{itemize}
A {\it subsequence} $T$ of $S$ is a divisor of $S$ in $\mathcal F (G_0)$ and we write $T \t S$. For a subset $H \subset G_0$, we denote by $S_H$ the subsequence of $S$ consisting of all terms from $H$. Furthermore, $T \t S$ if and only if $\mathsf v_g (T) \le \mathsf v_g (S)$ for all $g \in G_0$, and in such case, $S \bdot T^{[-1]}$ denotes the subsequence of $S$ obtained by removing the terms of $T$ from $S$ so that $\mathsf v_g \big( S \bdot T^{[-1]} \big) = \mathsf v_g (S) - \mathsf v_g (T)$ for all $g \in G_0$. On the other hand, we set $S^{-1} = g^{-1}_1 \bdot \ldots \bdot g^{-1}_{\ell}$ to be the sequence obtained by taking elementwise inverse from $S$.

Moreover, if $S_1, S_2 \in \mathcal F (G_0)$ and $g_1, g_2 \in G_0$, then $S_1 \bdot S_2 \in \mathcal F (G_0)$ has length $|S_1|+|S_2|$, \ $S_1 \bdot g_1 \in \mathcal F (G_0)$ has length $|S_1|+1$, \ $g_1g_2 \in G$ is an element of $G$, but $g_1 \bdot g_2 \in \mathcal F (G_0)$ is a sequence of length $2$. If $g \in G_0$, $T \in \mathcal F (G_0)$, and $k \in \N_0$, then
\[
g^{[k]}=\underset{k}{\underbrace{g\bdot\ldots\bdot g}}\in \mathcal F (G_0) \quad \text{and} \quad T^{[k]}=\underset{k}{\underbrace{T\bdot\ldots\bdot T}}\in \mathcal F (G_0) \,.
\]
Let $S \in \mathcal F (G_0)$ be a sequence as in \eqref{basic}.
When $G$ is written multiplicatively, we denote by
\[
  \pi(S) \, = \, \{ g_{\tau (1)} \ldots  g_{\tau (\ell)} \in G \mid \tau \mbox{ a permutation of $[1, \ell]$} \} \, \subset \, G
\]
the {\it set of products} of $S$, and it can easily be seen that $\pi(S)$ is contained in a $G'$-coset, where $G'$ is the commutator subgroup of $G$. Note that $|S|=0$ if and only if $S = 1_{\mathcal F (G)}$, and in that case we use the convention that $\pi (S) = \{ 1_G \}$.
When $G$ is written additively with commutative operation, we likewise define
\[
  \sigma(S) \, = \, g_1 + \ldots + g_{\ell} \, \in \, G
\]
to be the {\it sum} of $S$. More generally, for any $n \in \N_0$, the {\it $n$-sums} and {\it $n$-products} of $S$ are respectively denoted by
\[
  \Sigma_n (S) \, = \, \{ \sigma(T) \mid T \text{ divides } S \und |T| = n \} \, \subset \, G \quad \und \quad \Pi_n (S) \, = \, \underset{|T|=n}{\bigcup_{T \t S}} \pi (T) \, \subset \, G \,,
\]
and the {\it subsequence sums} and {\it subsequence products} of $S$ are respectively denoted by
\[
  \Sigma (S) \, = \, \bigcup_{n \ge 1} \Sigma_n (S) \, \subset \, G \quad \und \quad \Pi (S) \, = \, \bigcup_{n \ge 1} \Pi_n (S) \, \subset \, G \,.
\]
The sequence $S$ is called
\begin{itemize}
\item a {\it product-one sequence} if $1_G \in \pi (S)$,

\smallskip
\item {\it product-one free} if $1_G \notin \Pi (S)$.

\smallskip
\item {\it square-free} if $\mathsf h (S) \le 1$.
\end{itemize}

\smallskip
\noindent
If $S = g_1 \bdot \ldots \bdot g_{\ell} \in \mathcal B (G)$ is a product-one sequence with $1_G = g_1 \ldots g_{\ell}$, then $1_G = g_i \ldots g_{\ell}g_1 \ldots g_{i-1}$ for every $i \in [1, \ell]$.
Every map of groups $\theta : G \rightarrow H$ extends to a monoid homomorphism $\theta : \mathcal F (G) \rightarrow \mathcal F (H)$, where $\theta (S) = \theta (g_1)\bdot \ldots \bdot \theta (g_{\ell})$.
If $\theta$ is a group homomorphism, then $\theta (S)$ is a product-one sequence if and only if $\pi(S) \cap \ker(\theta) \neq \emptyset$.
We denote by
\[
  \mathcal B (G_0) \, = \, \big\{ S \in \mathcal F (G_0) \t 1_G \in \pi(S) \big\}
\]
the set of all product-one sequences over $G_0$, and clearly $\mathcal B (G_0) \subset \mathcal F (G_0)$ is a submonoid. We denote by $\mathcal A (G_0)$ the set of irreducible elements of $\mathcal B (G_0)$ which, in other words, is the set of minimal product-one sequences over $G_0$.
Moreover,
\[
  \mathsf D (G_0) \, = \, \sup \big\{ |S| \t S \in \mathcal A (G_0) \big\} \, \in \, \N \cup \{ \infty \}
\]
is the {\it large Davenport constant} of $G_0$, and
\[
  \mathsf d (G_0) \, = \, \sup \big\{ |S| \t S \in \mathcal F (G_0) \mbox{ is product-one free } \big\} \, \in \, \N_0 \cup \{ \infty \}
\]
is the {\it small Davenport constant} of $G_0$.
It is well known that $\mathsf d (G) + 1 \le \mathsf D (G) \le |G|$, with equality in the first bound when $G$ is abelian, and equality in the second bound when $G$ is cyclic (\cite[Lemma 2.4]{Ge-Gr13a}). Moreover,  Geroldinger and  Grynkiewicz provide the precise value of the Davenport constants for non-cyclic groups having a cyclic index $2$ subgroups (see \cite{Ge-Gr13a, Gr13b}), whence we have that, for every $n \in \N_{\ge 2}$,
\[
  \mathsf D (Q_{4n}) \, = \, 3n \, \quad \und \quad \mathsf D (D_{2n}) \, = \, \left\{ \begin{array}{ll}
                                                                                         2n & \hbox{if $n \ge 3$ is odd} \\
                                                                               \frac{3n}{2} & \hbox{if $n \ge 4$ is even}.
                                                                                       \end{array} \right.
\]

\smallskip
\noindent
{\bf Ordered sequences over groups.} These are an important tool used to study (unordered) sequences over non-abelian groups. Indeed, it is quite useful to have related notation for sequences in which the order of terms matters. Thus, for a subset $G_0 \subset G$, we denote by $\mathcal F^{*} (G_0) = \big( \mathcal F^{*} (G_0), \bdot \big)$ the free (non-abelian) monoid with basis $G_0$, whose elements will be called the {\it ordered sequences} over $G_0$.

Taking an ordered sequence in $\mathcal F^{*} (G_0)$ and considering all possible permutations of its terms gives rise to a natural equivalence class in $\mathcal F^{*} (G_0)$, yielding a natural map
\[
  [ \bdot ] \, : \, \mathcal F^{*} (G_0) \quad \to \quad \mathcal F (G_0)
\]
given by abelianizing the sequence product in $\mathcal F^{*} (G_0)$. For any sequence $S \in \mathcal F (G_0)$, we say that an ordered sequence $S^{*} \in \mathcal F^{*} (G_0)$ with $[S^{*}] = S$ is an {\it ordering} of the sequence $S \in \mathcal F (G_0)$.

All notation and conventions for sequences extend naturally to ordered sequences. We sometimes associate an (unordered) sequence $S$ with a fixed (ordered) sequence having the same terms, also denoted by $S$. While somewhat informal, this does not give rise to confusion, and will improve the readability of some of the arguments.

For an ordered sequence $S = g_1 \bdot \ldots \bdot g_{\ell} \in \mathcal F^{*} (G)$, we denote by $\pi^{*} : \mathcal F^{*} (G) \to G$ the unique homomorphism that maps an ordered sequence onto its product in $G$, so
\[
  \pi^{*} (S) \, = \, g_1 \ldots g_{\ell} \, \in \, G \,.
\]
If $G$ is a multiplicatively written abelian group, then for every sequence $S \in \mathcal F (G)$, we always use $\pi^{*} (S) \in G$ to be the unique product, and $\Pi (S) = \bigcup \big\{ \pi^{*} (T) \t  T \text{ divides } S \und |T| \ge 1 \big\} \subset G$.

\smallskip
For the proof of our main results, the structure of product-one free sequences over  cyclic groups plays a crucial role. Thus we  gather some necessary lemmas regarding sequences over cyclic groups. Let $G$ be an additively written finite cyclic group.
A sequence $S \in \mathcal F (G)$ is called {\it smooth} (more precisely, {\it $g$-smooth}) if $S = (n_1 g) \bdot \ldots \bdot (n_{\ell} g)$, where $|S| = \ell \in \N$, $g \in G$, $1 = n_1 \le \ldots \le n_{\ell}$, $m = n_1 + \ldots + n_{\ell} < \ord (g)$, and $\Sigma (S) = \{ g, 2g, \ldots, mg \}$.

\smallskip
\begin{lemma} \cite[Lemma 5.1.4]{Ge09a} \label{2.1}~
Let $G$ be an additively written cyclic group of order $|G|=n \ge 3$, $g \in G$, and $k, l, n_1, \ldots, n_l \in \N$ such that $l \ge \frac{k}{2}$ and $m = n_1 + \ldots + n_l < k \le \ord (g)$. If $1 \le n_1 \le \ldots \le n_l$ and $S = (n_1 g) \bdot \ldots \bdot (n_l g)$, then $\sum (S) = \big\{ g, 2g, \ldots, mg \big\}$, and $S$ is $g$-smooth.
\end{lemma}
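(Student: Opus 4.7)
The plan is to prove the claim by induction on $j$, showing that $\Sigma(S_j) = \{g, 2g, \ldots, m_j g\}$ for every $j \in [0, l]$, where $S_j := (n_1 g) \bdot \ldots \bdot (n_j g)$ and $m_j := n_1 + \ldots + n_j$ (with $S_0$ the empty sequence and $m_0 = 0$). Setting $j = l$ yields the desired equality $\Sigma(S) = \{g, 2g, \ldots, mg\}$; since $m < \ord(g)$ the multiples $g, 2g, \ldots, mg$ are pairwise distinct in $G$, so this set really has $m$ elements. Combined with the monotonicity hypothesis and the fact that $n_1 = 1$ (which the induction will force), $S$ is then $g$-smooth by definition.

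For the inductive step from $j$ to $j+1$, I would invoke the elementary identity
\[
  \Sigma(S_{j+1}) \ = \ \Sigma(S_j) \ \cup \ \{n_{j+1} g\} \ \cup \ \big(n_{j+1} g + \Sigma(S_j)\big) \,,
\]
which under the inductive hypothesis rewrites $\Sigma(S_{j+1})$ as the union $\{g, 2g, \ldots, m_j g\} \cup \{n_{j+1} g, (n_{j+1}+1) g, \ldots, (n_{j+1} + m_j) g\}$. These two "intervals" merge into $\{g, 2g, \ldots, m_{j+1} g\}$ if and only if $n_{j+1} \le m_j + 1$, so the entire proof reduces to verifying this single inequality for every index $j \in [0, l-1]$.

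The main obstacle is this numerical check, and I would dispatch it by contradiction. If $n_{j+1} \ge m_j + 2$, then $n_{j+1} \ge j + 2$ since $m_j \ge j$ (each $n_i \ge 1$), and the monotonicity $n_{j+1} \le n_{j+2} \le \ldots \le n_l$ gives
\[
  m \ = \ m_j + \sum_{i = j+1}^{l} n_i \ \ge \ j + (l - j)(j + 2) \ = \ 2l + j(l - j - 1) \ \ge \ 2l \,,
\]
contradicting $m < k \le 2l$. The hypothesis $l \ge k/2$ enters precisely here through the form $2l \ge k$, and without it the greedy filling can be blocked by an oversized jump $n_{j+1}$. Applied with $j = 0$ this argument also forces $n_1 = 1$, which initializes the induction at $\Sigma(S_1) = \{g\} = \{m_1 g\}$; the inductive step then closes, delivering both the set equality and the smoothness of $S$.
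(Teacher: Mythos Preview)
The paper does not supply its own proof of this lemma; it is simply quoted from \cite[Lemma 5.1.4]{Ge09a}. So there is nothing to compare your argument against in the present paper.

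That said, your proof is correct and is essentially the standard one. The induction on the prefix $S_j$, the recursion $\Sigma(S_{j+1}) = \Sigma(S_j) \cup \{n_{j+1}g\} \cup \big(n_{j+1}g + \Sigma(S_j)\big)$, and the reduction to the gap condition $n_{j+1} \le m_j + 1$ are exactly the right ingredients. Your contradiction argument for the gap condition is clean: from $n_{j+1} \ge m_j + 2 \ge j+2$ and monotonicity you get $m \ge j + (l-j)(j+2) = 2l + j(l-j-1) \ge 2l \ge k$, contradicting $m < k$. The case $j=0$ indeed forces $n_1 = 1$, and the observation that $m < \ord(g)$ keeps the multiples $g, 2g, \ldots, mg$ distinct, so the interval description is genuine and $g$-smoothness follows. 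Nothing is missing.
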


\smallskip
\begin{lemma} \label{2.2}~
Let $G$ be an additively written cyclic group of order $|G|=n \ge 3$ and $S \in \mathcal F (G)$ a product-one free sequence of length $|S| \ge \frac{n+1}{2}$. Then $S$ is $g$-smooth for some $g \in G$ with $\ord (g) = n$, and for every $h \in \sum (S)$, there exists a subsequence $T \t S$ such that $\sigma(T) = h$ and $g \t T$. In particular,
\begin{enumerate}
\item if $|S| = n - 1$, then $S = g^{[n-1]}$.

\smallskip
\item if $|S| = n - 2$, then $S = (2g) \bdot g^{[n-3]}$ or $S = g^{[n-2]}$.

\smallskip
\item if $n \ge 4$, then, for every subsequence $W \t S$ with $|W| \ge \frac{n}{2} - 1$, we obtain that $g \t W$.
\end{enumerate}
\end{lemma}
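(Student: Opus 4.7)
The plan is to first establish the $g$-smoothness of $S$, then deduce the attainability statement about subsequences $T$ containing $g$, and finally read off the three structural consequences. The single nontrivial ingredient is smoothness; everything else reduces to counting.

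For smoothness, I invoke the Savchev--Chen structure theorem for zero-sum free sequences over cyclic groups. Since $S$ is product-one free of length $|S| \ge (n+1)/2 > n/2$, that theorem produces a generator $g$ of $G$ and integers $1 = n_1 \le n_2 \le \ldots \le n_\ell$ (with $\ell = |S|$) such that $S = (n_1 g) \bdot \ldots \bdot (n_\ell g)$ and $m := n_1 + \ldots + n_\ell < n$. Because $\ell \ge (n+1)/2 \ge n/2 \ge k/2$ with $k := n > m$, Lemma~\ref{2.1} applied to $S$ now yields $\Sigma(S) = \{g, 2g, \ldots, mg\}$, and hence $S$ is $g$-smooth. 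This is the step I would cite rather than reprove, and it is the real difficulty of the lemma.

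For the attainability statement, fix $h = jg \in \Sigma(S)$ with $1 \le j \le m$. A short count, using $m \ge r + 2(\ell - r)$ where $r := \mathsf v_g(S)$, together with $m \le n-1$ and $\ell \ge (n+1)/2$, shows $r \ge 2\ell - (n-1) \ge 2$. If $j = 1$ take $T = g$. Otherwise consider $S' := S \bdot g^{[-1]}$, which still has first term $g$, length $\ell - 1 \ge (n-1)/2 \ge m/2$, and sum $m - 1 < m \le n$; applying Lemma~\ref{2.1} to $S'$ with $k = m$ gives $\Sigma(S') = \{g, 2g, \ldots, (m-1)g\}$. Pick $W \mid S'$ with $\sigma(W) = (j-1)g$ and set $T := g \bdot W$, so that $\sigma(T) = jg = h$ and $g \mid T$.

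The three consequences follow from the constraints $\ell \le m \le n-1$ and $n_1 \le \ldots \le n_\ell$. For (1), $\ell = n-1$ forces $m = n-1$ and all $n_i = 1$, giving $S = g^{[n-1]}$. For (2), $\ell = n-2$ forces $m \in \{n-2, n-1\}$, giving the two listed shapes. For (3), if $W \mid S$ does not contain $g$, then $W$ consists only of terms $n_i g$ with $n_i \ge 2$, so $|W| \le \ell - r \le n - \ell - 1 \le (n-3)/2$; since $|W|$ is an integer and the hypothesis $|W| \ge n/2 - 1$ forces $|W| > (n-3)/2$, this is a contradiction.
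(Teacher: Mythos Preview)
Your proof is correct and follows essentially the same approach as the paper: cite Savchev--Chen for the $g$-smoothness, apply Lemma~\ref{2.1} to $S\bdot g^{[-1]}$ to obtain the attainability statement, and then read off (1)--(3) by counting. The only cosmetic differences are that you explicitly verify $\mathsf v_g(S)\ge 2$ (the paper leaves this implicit) and that you invoke Lemma~\ref{2.1} with $k=m$ rather than $k=n-1$; both choices work.
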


\begin{proof}
The first statement, that $S$ is $g$-smooth for some $g \in G$ with $\ord (g) = n$, was found independently by Savchev--Chen and by Yuan, and we cite it in the formulation of \cite[Theorem 5.1.8.1]{Ge09a}.

Suppose now that $S = (n_1 g) \bdot \ldots \bdot (n_{\ell} g)$ with $1 = n_1 \le \ldots \le n_{\ell}$. Then $n_2 + \ldots + n_{\ell} < n-1$ and $\ell - 1 \ge \frac{n-1}{2}$. Applying Lemma \ref{2.1} (with $k = n-1$), we obtain that $S \bdot g^{[-1]}$ is still $g$-smooth.
Let $h \in \sum (S) = \big\{ g, \, 2g,  \ldots, \, (n_1 + \ldots + n_{\ell})g \big\}$.
If $h = g$, then we take $T = g$. If $h \neq g$, then since $S \bdot g^{[-1]}$ is $g$-smooth, it follows that $h + (-g) \in \sum \big( S \bdot g^{[-1]} \big)$, and hence there exists $W \t S \bdot g^{[-1]}$ such that $\sigma(W) = h + (-g)$. Thus $W \bdot g$ is a subsequence of $S$ with $\sigma(W \bdot g) = h$.

\smallskip
1. and 2. This follows immediately from the main statement.

\smallskip
3. Let $n \ge 4$, and $W \t S$ be a subsequence with $|W| \ge \frac{n}{2} - 1$. Then there exists a subset $I \subset [1,\ell]$ with $|I| \ge \frac{n}{2} - 1$ such that $W = \prod^{\bullet}_{i \in I} (n_i g)$. Assume to the contrary that $n_i \ge 2$ for all $i \in I$.  Then
\[
  n - 1 \, \ge \, \sum_{j=1}^{\ell} n_j \, = \, \sum_{i \in I} n_i + \sum_{j \in [1,\ell] \setminus I} n_j \, \ge \, 2|W| + \big( |S| - |W| \big) \, = \, |S| + |W| \, \ge \, n - \frac{1}{2} \,,
\]
a contradiction.
\end{proof}


\bigskip
\section{On special sequences} \label{3}
\bigskip

\smallskip
In this section, we study the structure of minimal product-one sequences under certain additional conditions (Propositions \ref{3.2} and \ref{3.3}). These  results will be used substantially in the proofs of our main results in next section. We need the Theorem of DeVos--Goddyn--Mohar (see Theorem 13.1 of \cite{Gr13a} and the proceeding special cases).

\smallskip
\begin{lemma} \label{3.1}~
Let $G$ be a finite abelian group, $S\in \mathcal F(G)$ a sequence, $n \in [1,|S|]$, and $H = \mathsf H \big( \sum_n (S) \big)$. Then
\[
  |\Sigma_n (S)| \,\, \ge \,\, \left( \sum_{g \in G/H} \min \{n, \mathsf v_g \big( \phi_H (S) \big) \} - n + 1 \right)|H| \,.
\]
\end{lemma}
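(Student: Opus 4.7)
The displayed inequality is essentially the statement of the DeVos--Goddyn--Mohar theorem as recorded in \cite[Theorem 13.1]{Gr13a}; the authors themselves flag this immediately before the lemma. So my plan is not to reprove DGM, but to extract the bound from it after a short reduction to the quotient $G/H$, which is where the content of DGM applies cleanly.

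Concretely, the first step is to observe that, by the very definition of $H = \mathsf H(\Sigma_n(S))$, the set $\Sigma_n(S) \subset G$ is a union of $H$-cosets, so $|\Sigma_n(S)| = |H| \cdot |\phi_H(\Sigma_n(S))|$. Since $\phi_H$ is a group homomorphism and $\Sigma_n$ commutes with homomorphisms in the obvious sense, we have $\phi_H(\Sigma_n(S)) = \Sigma_n(\phi_H(S))$. Moreover, by the maximality in the choice of $H$, the stabilizer of $\Sigma_n(\phi_H(S))$ inside $G/H$ is trivial.

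The second step is to apply the DeVos--Goddyn--Mohar theorem to the sequence $\phi_H(S) \in \mathcal F(G/H)$ with the same integer $n$. Because the stabilizer in $G/H$ is trivial, DGM delivers the clean linear lower bound
\[
  |\Sigma_n(\phi_H(S))| \,\ge\, \sum_{g \in G/H} \min\{n, \mathsf v_g(\phi_H(S))\} - n + 1 \,,
\]
where the sum over $g \in G/H$ is the standard expression from the DGM formulation. Multiplying this inequality by $|H|$ and using the identity from step one yields exactly the bound claimed in the lemma.

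The only thing that could be called an obstacle is verifying that $n \in [1, |S|]$ suffices to meet the hypotheses of \cite[Theorem 13.1]{Gr13a} (it does, since DGM has no hidden smallness assumption on $n$) and matching the indexing conventions (the multiplicities $\mathsf v_g(\phi_H(S))$ in the quotient are counted with the correct pushforward, and the term $-n+1$ is exactly the DGM correction). After this bookkeeping, no further combinatorial argument — no induction on $|S|$, no Kneser-style transform, no Fourier input — is needed beyond the black-box invocation of DGM.
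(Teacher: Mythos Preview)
Your proposal is correct and matches the paper's approach: the paper gives no proof and simply cites the DeVos--Goddyn--Mohar theorem \cite[Theorem~13.1]{Gr13a}, which already states exactly the inequality of Lemma~\ref{3.1} with the stabilizer $H$ and the factor $|H|$ built in. Your reduction to the quotient $G/H$ is correct but superfluous---the full DGM statement does not require passing to the trivial-stabilizer case first, so you can simply invoke it directly and stop.
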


\smallskip
Let $G$ be an additively (resp. multiplicatively) written finite abelian group. Then $2G = \{ 2g \t g \in G \} \,\, \big( \textnormal{resp.} \,\, G^{2} = \{ g^{2} \t g \in G \} \big)$. Likewise, given a sequence $S = g_1 \bdot \ldots \bdot g_{\ell} \in \mathcal F (G)$, we set
\begin{equation} \label{double}
 2S \, = \, 2g_1 \bdot \ldots \bdot 2g_{\ell} \, \in \, \mathcal F (2G) \quad \big(\mbox{resp. } S^{2} \, = \, g^{2}_1 \bdot \ldots \bdot g^{2}_{\ell} \, \in \, \mathcal F \big( G^{2} \big) \big) \,.
\end{equation}
The {E}rd{\H{o}}s-{G}inzburg-{Z}iv constant $\mathsf s (G)$ is the smallest integer $\ell \in \N$ such that every sequence $S \in \mathcal F (G)$ of length $|S| \ge \ell$ has a subsequence $T \in \mathcal B (G)$ of length $|T| = \exp (G)$. If $G = C_{n_1} \oplus C_{n_2}$ with $1 \le n_1 \t n_2$, then $\mathsf s (G) = 2n_1+2n_2-3$ (\cite[Theorem 5.8.3]{Ge-HK06a}). Results on groups of higher rank can be found in \cite{Fa-Ga-Zh11a}.

\smallskip
\begin{proposition} \label{3.2}~
Let $G = \la \alpha, \tau \t \alpha^{n} = \tau^{2} = 1_G \und \tau\alpha = \alpha^{-1}\tau \ra$ be a dihedral group, where $n \in \N_{\ge 4}$ is even.
Let $S \in \mathcal F (G)$ be a minimal product-one sequence such that $|S| \ge n$ and $\supp(S) \subset G \setminus \la \alpha \ra$.
Then $S$ is a sequence of length $|S| = n$ having the following form{\rm \,:}
\begin{enumerate}
\item[(a)] If $n = 4$, then
           \[
             S \, = \, \tau \bdot \alpha\tau \bdot \alpha^{2}\tau \bdot \alpha^{3}\tau \, \quad \mbox{ or } \quad S \, = \, (\alpha^{x}\tau)^{[2]} \bdot \alpha^{y}\tau \bdot \alpha^{y+2}\tau \,,
           \]
           where $x, y \in [0,3]$ with $x \equiv y+1 \pmod{2}$.

\smallskip
\item[(b)] If $n \ge 6$, then
           \[
             S \, = \, (\alpha^{x}\tau)^{[v]} \bdot (\alpha^{\frac{n}{2} + x}\tau)^{[\frac{n}{2} - v]} \bdot (\alpha^{y}\tau)^{[w]} \bdot (\alpha^{\frac{n}{2} + y}\tau)^{[\frac{n}{2} - w]} \,,
           \]
           where $x, y \in [0, n-1]$ such that $2x \not\equiv 2y \pmod{n}$ and $\gcd(x-y, \frac{n}{2}) = 1$, and $v, w \in [0,\frac{n}{2}]$ such that $x-y \equiv v-w \pmod{2}$.
\end{enumerate}
In particular, there are no minimal product-one sequences $S$ over $G$ such that $S = S_1 \bdot S_2$ for some $S_1 \in \mathcal F \big( \la \alpha \ra \big)$ and $S_2 \in \mathcal F \big( G \setminus \la \alpha \ra \big)$ of length $|S_2| \ge n+2$.
\end{proposition}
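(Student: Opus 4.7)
I first translate product-one into arithmetic. Writing $S = \prod^{\bullet}_{i=1}^\ell \alpha^{b_i}\tau$ with $\ell = |S|$ and using $\tau\alpha = \alpha^{-1}\tau$, the product of $S$ in any ordering equals $\alpha^c$ where $c$ is an alternating sum of the $b_i$'s. Consequently $\phi_{\langle\alpha\rangle}(S) = \tau^{[\ell]}$ forces $\ell$ even, and $S \in \mathcal B(G)$ if and only if the multiset $B = (b_1, \ldots, b_\ell)$ admits a bipartition $B = A \sqcup A'$ with $|A| = |A'| = \ell/2$ and $\sum A \equiv \sum A' \pmod n$. Two useful facts: $(\alpha^b\tau)^2 = 1_G$, so $(\alpha^b\tau)^{[2]}$ is always product-one, and a concatenation of two product-one sequences is again product-one (concatenate their product-one orderings).

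To prove $|S| = n$, I argue $\ell \le n$ by contradiction; suppose $\ell \ge n+2$. Since $|G \setminus \langle\alpha\rangle| = n$, pigeonhole yields some $\alpha^b\tau$ of multiplicity $\ge 2$ in $S$, hence $(\alpha^b\tau)^{[2]}$ is a product-one subsequence. For minimality to hold, the complement $S \bdot (\alpha^b\tau)^{[-2]}$ must not be product-one for every such $b$, i.e., $\sigma(B)/2 - b \pmod{n/2}$ must not lie in $\Sigma_{\ell/2 - 1}(B \bdot \{b,b\}^{[-1]})$. Applying Lemma~\ref{3.1} to the reduced multiset yields a lower bound on $|\Sigma_{\ell/2 - 1}(\cdot)|$; combined with the product-one assumption $\sigma(B)/2 \in \Sigma_{\ell/2}(B)$ and appropriate control of stabilizers, this lower bound is sufficient to force the target sum into the set for at least one choice of $b$, contradicting minimality. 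Hence $\ell = n$.

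For the explicit structure when $\ell = n$, I project $B$ to $\bar B$ in $C_n / \langle n/2\rangle \cong C_{n/2}$. For $n = 4$ (Case (a)), direct enumeration of size-$4$ multisets in $C_2$ admitting a balanced bipartition, lifted to $C_4$ and constrained by minimality, yields the two listed forms; the parity $x \equiv y+1 \pmod 2$ is exactly what prevents $(\alpha^y\tau) \bdot (\alpha^{y+2}\tau)$ from being product-one, thus ensuring the factorization $S = (\alpha^x\tau)^{[2]} \bdot ((\alpha^y\tau)\bdot(\alpha^{y+2}\tau))$ is not a product-one factorization. For $n \ge 6$ (Case (b)), Lemma~\ref{2.2} applied to a product-one-free subsequence of $\bar B$ of length $\ge n/4$ yields $\bar g$-smoothness; combined with the minimality of $S$, this restricts $\bar B$ to be supported on exactly two values $[x],[y] \in C_{n/2}$ with $\gcd(x - y, n/2) = 1$, each of multiplicity $n/2$. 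Lifting to $C_n$ pairs $[x]$ and $[y]$ with the cosets $\{x, x+n/2\}$ and $\{y, y+n/2\}$ of $\langle n/2\rangle$, yielding the stated form. The parity condition $x - y \equiv v - w \pmod 2$ arises from the abelianization $G^{\mathrm{ab}} = G / \langle \alpha^2 \rangle \cong C_2 \oplus C_2$, which demands an even count of terms in each non-identity coordinate class.

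For the ``in particular'' statement, suppose $S = S_1 \bdot S_2$ is minimal product-one with $|S_2| \ge n + 2$. Pigeonhole on $G \setminus \langle\alpha\rangle$ again produces some $\alpha^b\tau$ of multiplicity $\ge 2$ in $S_2$. The previous argument extends, but now $S_1$'s $\alpha$-terms contribute an additional ``signed sumset'' to the arithmetic condition (since in any ordering of $S$, commuting an $\alpha$-term past a $\tau$-term flips the sign of the $\alpha$-exponent, giving free sign choice $\sum \epsilon_i a_i$ on the $S_1$-exponents). Applying Lemma~\ref{3.1} as in the second step yields a non-trivial product-one factorization $S = T \bdot V$ of $S$, contradicting minimality. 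The principal obstacle throughout is this Lemma~\ref{3.1} invocation: verifying (non-)containment of the specific target sum in $\Sigma_k$ of the reduced multiset typically requires case analysis on the multiplicity structure and the stabilizer $\mathsf H(\Sigma_k(\cdot))$ of $B$.
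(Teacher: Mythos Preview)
Your proposal correctly identifies the arithmetic translation (the bipartition $B = A \sqcup A'$ with $\sigma(A) = \sigma(A')$) and correctly names Lemma~\ref{3.1} as the main tool. However, the proof has substantial gaps precisely where the real difficulty lies.

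First, the phrase ``appropriate control of stabilizers'' is doing almost all of the work and none of it is carried out. In the paper's argument, after removing $\overline{x_1}\bdot\overline{x_3}$ with $\overline{x_1}=\overline{x_3}$ and setting $H = \mathsf H\big(\Sigma_\ell(2W')\big)$, one must separately treat $H$ trivial, $[2(\Z/n\Z):H]=2$, and $[2(\Z/n\Z):H]\ge 3$. The last case alone requires the Erd{\H{o}}s--Ginzburg--Ziv constant $\mathsf s(H)=2|H|-1$ to extract three zero-sum blocks $U_1,U_2,U_3$ of length $|H|$ and a pigeonhole on their sums in $\{\overline 0,\overline{n/2}\}$; the trivial-$H$ case requires an auxiliary claim (\textbf{A2} in the paper) showing $2W_2=(2\overline{x_2})^{[n/2]}$, and then a swapping argument to force the same for $2W_1$. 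None of this is indicated in your sketch, and without it the DeVos--Goddyn--Mohar bound by itself does not force the target sum into $\Sigma_\ell(2W')$.

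Second, your structural argument for $n\ge 6$ via Lemma~\ref{2.2} is not the paper's route and does not obviously work: you would need a product-one free subsequence of $\bar B$ in $C_{n/2}$ of length $\ge (n/2+1)/2$, but you have not produced one, and smoothness alone would not yield the two-value support with multiplicity exactly $n/2$ each. The paper instead obtains $2W_1=(2\overline{x_1})^{[n/2]}$ and $2W_2=(2\overline{x_2})^{[n/2]}$ directly from the stabilizer analysis, and then checks $\gcd(x_1-x_2,n/2)=1$ by computing $\Sigma_\ell(2W')$ explicitly. Also, your explanation of the parity condition in case~(a) is backwards: $(\alpha^y\tau)\bdot(\alpha^{y+2}\tau)$ is \emph{never} product-one (its products are $\alpha^{\pm 2}\ne 1_G$); the condition $x\equiv y+1\pmod 2$ is what makes $S$ product-one at all (via the ordering $(\alpha^x\tau)(\alpha^y\tau)(\alpha^x\tau)(\alpha^{y+2}\tau)=\alpha^{2x-2y-2}$), not what prevents a factorization.

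Finally, for the ``in particular'' statement, the paper's argument is much shorter and cleaner than your signed-sumset idea: write $S_1=T_1\bdot T_2$ according to a product-one ordering of $S$, absorb $\pi^*(T_1)$ and $\pi^*(T_2)$ into two adjacent $\tau$-terms of $S_2$, and obtain a minimal product-one sequence $S''$ supported on $G\setminus\langle\alpha\rangle$ with $|S''|=|S_2|\ge n+2$, contradicting the main statement.
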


\begin{proof}
For every $x \in \Z$, we set $\overline{x} = x + n \Z \in \Z/n\Z$. Let $S = \prod^{\bullet}_{i \in [1,|S|]} \alpha^{x_i}\tau \in \mathcal A (G)$ be of length $|S| \ge n$ with $\alpha^{x_1}\tau \ldots \alpha^{x_{|S|}}\tau = 1_G$, where $x_1, \ldots, x_{|S|} \in [0, n-1]$. Since $S \in \mathcal A (G)$, it follows that $|S|$ is even, and after renumbering if necessary, we set
\[
  W \, = \, \overline{x_1} \bdot \ldots \bdot \overline{x_{|S|}} \, = \, W_1 \bdot W_2 \, \in \, \mathcal F (\Z/n\Z) \,,
\]
where $W_1 = \prod^{\bullet}_{i \in [1,|S|/2]} \overline{x_{2i-1}}$ and $W_2 = \prod^{\bullet}_{i \in [1,|S|/2]} \overline{x_{2i}}$. Thus we have  $\sigma(W_1) = \sigma(W_2)$.
If we shift the sequence $W$ by $\overline{y}$ for some $y \in \Z$, then the corresponding sequence $S' = \prod^{\bullet}_{i \in [1,|S|]} \alpha^{x_i + y}\tau$ is still a minimal product-one sequence.
If $S'$ has the asserted structure, then the same is true for $S$ whence we may shift the sequence $W$ whenever this is convenient.
For every subsequence $U = \overline{y_1} \bdot \ldots \bdot \overline{y_{v}}$ of $W$, we denote by $\psi (U) = \alpha^{y_1}\tau \bdot \ldots \bdot \alpha^{y_v}\tau$ the corresponding subsequence of $S$.

\medskip
{\bf A1.} \, {\it Let $U = U_1 \bdot U_2$ be a subsequence of $W$ such that $|U_1| = |U_2|$ and $\sigma(U_1) = \sigma(U_2)$. Then $\psi (U)$ is a product-one sequence.}

\begin{proof}[Proof of {\bf A1}]
Suppose that $U_1 = \overline{y_1} \bdot \ldots \bdot \overline{y_{|U_1|}}$ and $U_2 = \overline{z_1} \bdot \ldots \bdot \overline{z_{|U_1|}}$. Since $\sigma(U_1) = \sigma(U_2)$, it follows that
\[
  \alpha^{y_1}\tau \alpha^{z_1}\tau \ldots \alpha^{y_{|U_1|}}\tau \alpha^{z_{|U_1|}}\tau \, = \, \alpha^{(y_1 + \ldots + y_{|U_1|}) - (z_1 + \ldots + z_{|U_1|})} \, = \, 1_G \,,
\]
whence $\psi (U)$ is a product-one sequence.
\end{proof}

\smallskip
If $\supp(W_1) \cap \supp(W_2) \neq \emptyset$, say $\overline{x_1} = \overline{x_2}$, then since $\sigma(W_1) = \sigma(W_2)$, it follows by {\bf A1} that $\psi( \overline{x_1} \bdot \overline{x_2})$ and $\psi \big( W \bdot ( \overline{x_1} \bdot \overline{x_2})^{[-1]} \big)$ are both product-one sequences, a contradiction.
Therefore $\supp(W_1) \cap \supp(W_2) = \emptyset$.

\medskip
\noindent
{\bf CASE 1.} \, $\mathsf h (W) = 1$.
\smallskip

Since $|W| \ge n = |\Z/n\Z|$, it follows that $|W| = n$, and hence $\supp(W) = \Z/n\Z$. Since $\sigma(W_1) = \sigma(W_2)$, it follows that
\[
  2(x_1 + x_3 + \ldots + x_{|S|-1}) \, \equiv \, \frac{n(n-1)}{2} \pmod{n} \,, \, \mbox{ whence } \, 2 \, \bigl\vert \, \frac{n}{2}(n-1) \,.
\]
Since $n$ is even, we have $\gcd(2, n-1) = 1$, which implies that $\frac{n}{2}$ is even. Note that, for any distinct two elements $x_{i_1}, x_{i_3} \in [1,\frac{n}{2}]$ with $x_{i_2} = x_{i_1} + \frac{n}{2}$ and $x_{i_4} = x_{i_3} + \frac{n}{2}$, the sequence $\prod^{\bullet}_{k \in [1,4]} \alpha^{x_{i_k}}\tau$ is a product-one sequence.
Since $\supp(W) = \Z/n\Z$, we have that $S$ is a product of $\frac{n}{4}$ product-one sequences of length $4$. Since $S \in \mathcal A (G)$, we must have that $n = 4$ and $W$ is a sequence over $\Z/4\Z$ with $\mathsf h(W) = 1$, whence $\psi(W)$ is the desired sequence for (a).

\medskip
\noindent
{\bf CASE 2.} \, $\mathsf h (W) \ge 2$.
\smallskip

Then there exists $i \in [1,|W|]$, say $i = 1$, such that $\mathsf v_{\overline{x_1}} (W) \ge 2$. In view of $\supp(W_1) \cap \supp(W_2) = \emptyset$, we may assume without loss of generality that $\overline{x_1} = \overline{x_3}$. Let
\[
  W' \, = \, \big( W_1 \bdot ( \overline{x_1} \bdot \overline{x_3})^{[-1]} \big) \bdot W_2 \quad \und \quad \ell \, = \, \frac{|W'|}{2} \, = \, \frac{|W|}{2} -1 \,.
\]

If $\sum_{\ell} (2W') = 2(\Z/n\Z)$, it follows by $\sigma(W') = 2\sigma(W_2) - 2 \overline{x_1} \in 2(\Z/n\Z)$ that there exits a subsequence $T \t W'$ of length $|T| = \ell$ such that $2\sigma(T) = \sigma(W')$. Hence we infer that $\sigma (T) = \sigma(W' \bdot T^{[-1]})$ and $|T| = |W' \bdot T^{[-1]}|$. Thus {\bf A1} implies that $\psi( \overline{x_1} \bdot \overline{x_3})$ and $\psi (W')$ are both product-one sequences, a contradiction. Therefore $\sum_{\ell} (2W') \subsetneq 2(\Z/n\Z)$.

Let $H = \mathsf H \big( \sum_{\ell} (2W') \big)$. By Lemma \ref{3.1}, we obtain that
\[
  |\Sigma_{\ell} (2W')| \,\, \ge \,\, \left( \sum_{g \in (2(\Z/n\Z))/H} \min \{\ell, \mathsf v_{g} \big( \phi_H (2W') \big) \} - \ell + 1 \right)|H| \,.
\]
If $\mathsf h \big( \phi_H (2W') \big) \le \ell$, then
\[
  |\Sigma_{\ell} (2W')| \,\, \ge \,\, \big( |2W'| - \ell + 1 \big)|H| \,\, \ge \,\, \frac{n}{2} \, = \, |2(\Z/n\Z)| \,,
\]
a contradiction. If there exist distinct $g_1, g_2 \in (2(\Z/n\Z))/H$ such that $\mathsf v_{g_k} \big( \phi_H (2W') \big) > \ell$ for all $k \in [1,2]$, then
\[
  |\Sigma_{\ell} (2W')| \,\, \ge \,\, (2\ell - \ell + 1)|H| \,\, \ge \,\, \frac{n}{2} \, = \, |2(\Z/n\Z)| \,,
\]
a contradiction. Thus there exists only one element, say $g \in (2(\Z/n\Z))/H$, such that $\mathsf v_{g} \big( \phi_H (2W') \big) > \ell$, which implies that
\[
  \mathsf v_{g} \big( \phi_H (2W') \big)  \ge |2W'|+1-\frac{|\Sigma_{\ell} (2W')|}{|H|} \ge |W'| + 2 - \frac{n}{2|H|} \,.
\]

\medskip
{\bf A2.} \, {\it If $H$ is trivial, then $|W| = n$ and $2W_2 = (2\overline{x_2})^{[\frac{n}{2}]}$ with $\mathsf v_{2\overline{x_2}} (2W_1) = 0$.}

\begin{proof}[Proof of {\bf A2}]
Suppose that $H$ is trivial. Then there exists $g \in 2(\Z/n\Z)$ such that  $\mathsf v_g (2W') \ge |W'| + 2 - \frac{n}{2} \ge \ell + 1$, and then we set $g = 2\overline{y}$ for some $y \in \Z$. If $\max \big\{ \mathsf v_{2\overline{y}} (2W_1), \mathsf v_{2\overline{y}} (2W_2) \big\} \le 1$, then $\ell + 1 \le \mathsf v_{2\overline{y}} (2W') \le \mathsf v_{2\overline{y}} (2W_1) + \mathsf v_{2\overline{y}} (2W_2) \le 2$, and thus $\ell \le 1$. Since $\ell \ge 1$, we obtain that $\ell = 1$, and it follows by $\ell = \frac{|W|}{2}-1$ that $|W| = n = 4$ and $|W_1| = |W_2| = 2$. Since $\max \big\{ \mathsf v_{2\overline{y}} (2W_1), \mathsf v_{2\overline{y}} (2W_2) \big\} \le 1$, we obtain that $2\overline{x_1} \neq 2\overline{y}$, and hence $2 = \ell+1 \le \mathsf v_{2\overline{y}} (2W') = \mathsf v_{2\overline{y}} (2W_2) \le 1$, a contradiction.
Thus we must have that $\max \big\{ \mathsf v_{2\overline{y}} (2W_1), \mathsf v_{2\overline{y}} (2W_2) \big\} \ge 2$, and assert that $\min \big\{ \mathsf v_{2\overline{y}} (2W_1), \mathsf v_{2\overline{y}} (2W_2) \big\} = 0$. Assume to the contrary that $\min \big\{ \mathsf v_{2\overline{y}} (2W_1), \mathsf v_{2\overline{y}} (2W_2) \big\} \ge 1$.
Then we may suppose by shifting if necessary that $2y \equiv 0 \pmod{n}$, and by symmetry that $\mathsf v_{2\overline{y}} (2W_1) \le \mathsf v_{2\overline{y}} (2W_2)$. Since $\supp(W_1) \cap \supp(W_2) = \emptyset$, we can assume that $\mathsf v_{\overline{y}} (W_1) = 0$ and $\mathsf v_{\overline{y}} (W_2) \ge 2$, and it follows that
\[
  \sigma \bigg( W_1 \bdot (\overline{y} + \overline{\frac{n}{2}})^{[-1]} \bigg) \, = \, \sigma \bigg( W_2 \bdot (\overline{y} + \overline{\frac{n}{2}}) \bdot (\overline{y} \bdot \overline{y})^{[-1]} \bigg) \,.
\]
Thus {\bf A1} ensures that $\psi(\overline{y} \bdot \overline{y})$ and $\psi(W \bdot (\overline{y} \bdot \overline{y})^{[-1]})$ are both product-one sequences, a contradiction. Hence $\min \big\{ \mathsf v_{2\overline{y}} (2W_1), \mathsf v_{2\overline{y}} (2W_2) \big\} = 0$, and it follows that
\[
  \ell + 1 \, \le \, \mathsf v_{2\overline{y}} (2W') \, = \, \max \big\{ \mathsf v_{2\overline{y}} (2(W_1\bdot(\overline{x_1} \bdot \overline{x_3})^{[-1]})), \mathsf v_{2\overline{y}} (2W_2) \big\} \, \le \, \ell + 1 \,.
\]
Thus $\mathsf v_{2\overline{y}} (2W') = \mathsf v_{2\overline{y}} (2W_2) = |W_2| = \ell + 1$. If $|W| \ge n+2$, then $\ell \ge \frac{n}{2}$, and thus $\mathsf v_{2\overline{y}} (2W') \ge |W'| + 2 - \frac{n}{2} \ge \ell + 2$, a contradiction. Therefore $|W| = n$ and $2W_2 =(2\overline{y})^{[\frac{n}{2}]}= (2\overline{x_2})^{[\frac{n}{2}]}$ with $\mathsf v_{2\overline{x_2}} (2W_1) = 0$.
\end{proof}

\smallskip
From now on, we assume that $(\overline{x_1}, \overline{x_3})$ is chosen to make $|H|$ maximal.

\medskip
\noindent
{\bf SUBCASE 2.1.} \, $H$ is non-trivial.
\smallskip

If $n = 4$, then $H \subset 2(\Z/4\Z)\cong C_2$ implies that $H = 2(\Z/4\Z)$, whence $\sum_{\ell} (2W') = 2(\Z/4\Z)$, a contradiction. Thus we can assume that $n \ge 6$.

Suppose that $[2(\Z/n\Z) : H] \ge 3$. Then $|H| \le \frac{n}{6}$, and since $\ell \ge \frac{n}{2} - 1$, we have
\[
  \mathsf v_g \big( \phi_H (2W') \big) \, \ge \, \ell + 1 + \frac{n}{2} - \frac{n}{2|H|} \, \ge \, \ell + 1 + 3|H| - 3 \,.
\]
Then it follows that $\min \big\{ \mathsf v_g \big( \phi_H (2W_1) \big), \mathsf v_g \big( \phi_H (2W_2) \big) \big\} \ge 3|H|-3$, for otherwise, we obtain that
\[
  \mathsf v_g \big( \phi_H (2W') \big) \, \le \, \mathsf v_g \big( \phi_H (2W_1) \big) + \mathsf v_g \big( \phi_H (2W_2) \big) \, \le \, (3|H| - 4) + \max \{|W_1|, |W_2|\} \, = \, \ell + 1 + 3|H|-4 \,,
\]
a contradiction. Moreover, we obtain that $\max \big\{ \mathsf v_g \big( \phi_H (2W_1) \big), \mathsf v_g \big( \phi_H (2W_2) \big) \big\} \ge 3|H| - 1$, for otherwise
\[
  \mathsf v_g \big( \phi_H (2W') \big) \, \le \, \mathsf v_g \big( \phi_H (2W_1) \big) + \mathsf v_g \big( \phi_H (2W_2) \big) \, \le \, 2(3|H| - 2) \, \le \frac{n}{2} + 3|H| - 4 \, \le \, \ell + 3|H| - 3 \,,
\]
a contradiction. Then it suffices to show the case when $\mathsf v_g \big( \phi_H (2W_1) \big) \le \mathsf v_g \big( \phi_H (2W_2) \big)$. Indeed the other case when $\mathsf v_g \big( \phi_H (2W_1) \big) \ge \mathsf v_g \big( \phi_H (2W_2) \big)$ follows by an identical argument.
Since $g \in (2(\Z/n\Z))/H$, by shifting if necessary, we can assume that $g = H$, whence $|(2W_1)_H | \ge 3|H|-3$ and $|(2W_2)_H| \ge 3|H| -1$.
Since $H$ is a non-trivial cyclic group, it follows by $\mathsf s (H) = 2|H|-1$ that there exist $U_1 \t W_1$ and $U_2 \t W_2$ such that $2U_1$ and $2U_2$ are zero-sum sequences over $H$ of length $|U_1| = |U_2| = |H|$.
Since $|\big( 2(W_2 \bdot U^{[-1]}_2)\big)_H| \ge 2|H|-1$, there also exists $U_3 \t W_2 \bdot U^{[-1]}_2$ such that $2U_3$ is a zero-sum sequence over $H$ of length $|U_3| = |H|$. Since $\sigma(U_k) \in \{ \overline{0}, \overline{\frac{n}{2}} \}$ for all $k \in [1,3]$, there exist distinct $i, j \in [1,3]$ such that $\sigma(U_i) = \sigma(U_j)$.
If $\sigma(U_1) = \sigma(U_j)$ for some $j \in [2,3]$, then $\sigma(W_1 \bdot U^{[-1]}_1) = \sigma(W_2 \bdot U^{[-1]}_j)$, and thus {\bf A1} implies that $\psi (U_1 \bdot U_j)$ and $\psi \big( W \bdot (U_1 \bdot U_j)^{[-1]}) \big)$ are both product-one sequences, a contradiction.
If $\sigma(U_2) = \sigma(U_3)$, then $\sigma \big( W_1 \bdot U^{[-1]}_1 \big) = \sigma \big( W_2 \bdot U_1 \bdot (U_2\bdot U_3)^{[-1]} \big)$ and $|W_1 \bdot U^{[-1]}_1| = \frac{|W|}{2} -|H| = |W_2 \bdot U_1 \bdot (U_2\bdot U_3)^{[-1]}|$. Thus {\bf A1} ensures that $\psi (U_2 \bdot U_3)$ and $\psi \big(W \bdot (U_2 \bdot U_3)^{[-1]}\big)$ are both product-one sequences, a contradiction.

Hence $[2(\Z/n\Z) : H] = 2$, and we obtain that $\mathsf v_g \big( \phi_H (2W') \big) \ge |W'|$. Then we may assume by shifting if necessary that $\supp(2W') \subset H$, and hence $\supp(W') \subset 2(\Z/n\Z)$. Since $\supp(W_1) \cap \supp(W_2) = \emptyset$ and $|W_2| \ge \frac{n}{2}$, we infer in view of $\supp(W_2) \subset 2(\Z/n\Z)$ that there exists $\overline{y} \in \supp(W_2)$ with $\mathsf v_{\overline{y}} (W_2) \ge 2$. By swapping the role between $(\overline{x_1}, \overline{x_3})$ and $(\overline{y}, \overline{y})$, we have that $|K| = |\mathsf H \big( \sum_{\ell} (2W'') \big)| \le |H|$ by the choice of $(\overline{x_1}, \overline{x_3})$, where $W'' = W_1 \bdot \big( W_2 \bdot (\overline{y} \bdot \overline{y})^{[-1]} \big)$. Then we assert that $2\overline{x_1} \in H$.
If $K$ is trivial, then {\bf A2} ensures that $2W_1 = (2\overline{x_1})^{[\frac{n}{2}]}$, and it follows by $n \ge 6$ that $2\overline{x_1} \in H$. If $K$ is non-trivial, then we must have $|K| = |H|$, for otherwise $[2(\Z/n\Z) : K] \ge 3$, and then the argument from the beginning of {\bf SUBCASE 2.1} leads to a contradiction. As two subgroups of a finite cyclic group having the same order are equal, we obtain that $K = H$, and since $W'$ and $W''$ share at least one term in common ($n \ge 6$), it follows that the $K$-coset containing $\supp(2W'')$ must be $H$, whence $2\overline{x_1} \in H$.
Thus, in all cases, we obtain that
\[
  \sigma(W') \, = \, 2\sigma(W_2) - 2\overline{x_1} \, \in \, H \, = \, \Sigma_{\ell} (2W') \,,
\]
where the final equality follows from the fact that $H$ is the stabilizer of $\sum_{\ell} (2W')$. Hence there exists $T \t W'$ of length $|T| = \ell$ such that $2\sigma(T) = \sigma(W')$, and thus we infer that $\sigma (T) = \sigma(W' \bdot T^{[-1]})$ and $|T| = |W' \bdot T^{[-1]}|$. Therefore {\bf A1} ensures that $\psi(\overline{x_1} \bdot \overline{x_3})$ and $\psi (W')$ are both product-one sequences, a contradiction.

\medskip
\noindent
{\bf SUBCASE 2.2.} \, $H$ is trivial.
\smallskip

By {\bf A2}, we have $2W_2 = (2\overline{x_2})^{[\frac{n}{2}]}$. If $\mathsf h (W_2) \ge 2$, then we may assume that $\overline{x_2} = \overline{x_4}$. By swapping the role between $(\overline{x_1}, \overline{x_3})$ and $(\overline{x_2}, \overline{x_4})$, it follows by the choice of $(\overline{x_1}, \overline{x_3})$ that $\mathsf H \big(\sum_{\ell} (2W'') \big)$ is also trivial, where $W'' = W_1 \bdot \big(W_2 \bdot (\overline{x_2} \bdot \overline{x_4})^{[-1]} \big)$. Again by {\bf A2}, we obtain that $2W_1 = (2\overline{x_1})^{[\frac{n}{2}]}$ with $2\overline{x_1} \neq 2\overline{x_2}$.

If $n = 4$, then we may assume in view of $\mathsf h (W) \ge 2$ that
\[
  W \, = \, W_1 \bdot W_2 \, = \, \overline{x_1}^{[2]} \bdot \Big( \overline{x_2} \bdot (\overline{x_2} + \overline{2}) \Big) \,,
\]
where $\overline{x_1}, \overline{x_2} \in \Z/4\Z$ with $2\overline{x_1} \neq 2\overline{x_2}$ (by {\bf A2}); Indeed, the other possibility is that $W = W_1 \bdot W_2 = \overline{x_1}^{[2]} \bdot \overline{x_2}^{[2]}$, which implies that $\psi(\overline{x_1} \bdot \overline{x_1})$ and $\psi(\overline{x_2} \bdot \overline{x_2})$ are both product-one sequences, a contradiction. Since $\sigma(W_1) = \sigma(W_2)$, it follows that $x_1 \equiv x_2 + 1 \pmod{2}$. Thus $\psi(W)$ is the desired sequence for (a).

If $n \ge 6$, then it follows by $2W_2 = (2\overline{x_2})^{[\frac{n}{2}]}$ and the Pigeonhole Principle that $\mathsf h (W_2) \ge 2$. Thus we obtain that $2W = (2\overline{x_1})^{[\frac{n}{2}]} \bdot (2\overline{x_2})^{[\frac{n}{2}]}$, whence
\[
  W \, = \, W_1 \bdot W_2 \, = \, \bigg( \overline{x_1}^{[v]} \bdot \big( \overline{x_1} + \overline{\frac{n}{2}} \big)^{[\frac{n}{2}-v]} \bigg) \bdot \bigg( \overline{x_2}^{[w]} \bdot \big( \overline{x_2} + \overline{\frac{n}{2}} \big)^{[\frac{n}{2}-w]} \bigg) \,,
\]
where $\overline{x_1}, \overline{x_2} \in \Z/n\Z$ with $2\overline{x_1} \neq 2\overline{x_2}$ (by {\bf A2}), and $v, w \in [0,\frac{n}{2}]$. Since $\sigma(W_1) = \sigma(W_2)$, it follows that $x_1 - x_2 \equiv v - w \pmod{2}$.
All that remains is to show that $\gcd (x_1 - x_2, \frac{n}{2}) = 1$. Assume to the contrary that $\gcd \big(x_1 - x_2, \frac{n}{2}\big) = d \ge 2$. Then we set $n' = \frac{n}{2d}$, and since $2W' = (2\overline{x_1})^{[\ell - 1]} \bdot (2\overline{x_2})^{[\ell + 1]}$, it follows by $n' (2 x_1 - 2 x_2) \equiv 0 \pmod{n}$ that
\[
  \Sigma_{\ell} (2W') \, = \, \big\{ k(2\overline{x_1} - 2\overline{x_2}) - 2\overline{x_2} \t k \in [0, n' - 1] \big\} \,.
\]
Thus we obtain that $2\overline{x_1} - 2\overline{x_2} \in \mathsf H \big(\sum_{\ell} (2W')\big) = H$, and since $H$ is trivial, it follows that $2\overline{x_1} = 2\overline{x_2}$, a contradiction. Therefore $\gcd(x_1 - x_2, \frac{n}{2}) = 1$.

\smallskip
To prove the ``In particular" statement, we assume to the contrary that there exists a minimal product-one sequence $S$ such that $S = S_1 \bdot S_2$, where $S_1 \in \mathcal F \big( \la \alpha \ra \big)$ and $S_2 \in \mathcal F \big( G \setminus \la \alpha \ra \big)$ with $|S_2| \ge n + 2$. Then we suppose that $S_2 = \prod^{\bullet}_{i \in [1,|S_2|]} \alpha^{x_i}\tau$ and $S_1 = T_1 \bdot T_2$ such that $\pi^{*} (T_1) (\alpha^{x_1}\tau) \pi^{*} (T_2) (\alpha^{x_2}\tau \ldots \alpha^{x_{|S_2|}}\tau) = 1_G$. Since $S \in \mathcal A (G)$, it follows that
\[
  S'' \, = \, \big( \pi^{*} (T_1)\alpha^{x_1}\tau \big) \bdot \big( \pi^{*} (T_2)\alpha^{x_2}\tau \big) \bdot \Big( {\small\prod}^{\bullet}_{i \in [3,|S_2|]} \alpha^{x_i}\tau \Big) \, \in \, \mathcal A \big( G \setminus \la \alpha \ra \big)
\]
of length $|S''| = |S_2| \ge n + 2$, but this is impossible by the main statement.
\end{proof}

\smallskip
\begin{proposition} \label{3.3}~
Let $G = \la \alpha, \tau \t \alpha^{2n} = 1_G, \, \tau^{2} = \alpha^{n}, \und \tau\alpha = \alpha^{-1}\tau \ra$ be a dicyclic group, where $n \ge 2$. Let $S \in \mathcal F (G)$ be a minimal product-one sequence such that $|S| \ge 2n + 2$ and $\supp(S) \subset G \setminus \la \alpha \ra$. Then $S$ is a sequence of length $|S| = 2n+2$ having the form
\[
  S \, = \, (\alpha^{x}\tau)^{[n+2]} \, \bdot \, S_0 \,,
\]
where $x \in [0, 2n-1]$, and $S_0$ is a sequence of length $|S_0| = n$ having one of the following two forms{\rm \,:}
\begin{enumerate}
\item[(a)] $S_0 = (\alpha^{y}\tau)^{[2]} \bdot \alpha^{y + n}\tau \bdot \alpha^{y_1}\tau \bdot \ldots \bdot \alpha^{y_{n-3}}\tau$, where $n \ge 3$, $y, y_1, \ldots, y_{n-3} \in [0, 2n-1]$ such that $2y \not\equiv 2x \pmod{2n}$, $2y_i \not\equiv 2x \pmod{2n}$ for all $i$, and $(y_1 + \ldots + y_{n-3}) + 3y + n + x \equiv (n+1)(x + n) \pmod{2n}$

\smallskip
\item[(b)] $S_0 = (\alpha^{y}\tau)^{[n]}$, where $y \in [0, 2n-1]$ such that $2y \not\equiv 2x \pmod{2n}$ and $ny + x \equiv (n+1)(x+n) \pmod{2n}$.
\end{enumerate}
In particular, there are no minimal product-one sequences $S$ over $G$ such that $S = S_1 \bdot S_2$ for some $S_1 \in \mathcal F \big( \la \alpha \ra \big)$ and $S_2 \in \mathcal F \big( G \setminus \la \alpha \ra \big)$ of length $|S_2| \ge 2n + 4$.
\end{proposition}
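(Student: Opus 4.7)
My plan is to adapt the proof of Proposition \ref{3.2} to the dicyclic setting, making the modifications forced by the multiplication rule $(\alpha^a\tau)(\alpha^b\tau) = \alpha^{a-b+n}$. Fix a product-one ordering $\alpha^{x_1}\tau\cdots\alpha^{x_{|S|}}\tau = 1_G$ of $S$; since pairs of elements of $G \setminus \la\alpha\ra$ multiply into $\la\alpha\ra$, $|S|$ must be even, so write $|S| = 2k$ with $k \geq n+1$. Set $W_1 = \prod^{\bullet}_{i\in[1,k]} \overline{x_{2i-1}}$ and $W_2 = \prod^{\bullet}_{i\in[1,k]} \overline{x_{2i}}$ in $\mathcal F(\Z/2n\Z)$, and let $W = W_1 \bdot W_2$. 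Pairing consecutive terms in the product-one equation yields the compatibility $\sigma(W_1) - \sigma(W_2) \equiv kn \pmod{2n}$, and by the same alternating computation as in Proposition \ref{3.2} the dicyclic analog of claim \textbf{A1} reads: for any partition $U = U_1 \bdot U_2 \t W$ with $|U_1| = |U_2| = r$ and $\sigma(U_1) - \sigma(U_2) \equiv rn \pmod{2n}$, the alternating ordering of $\psi(U)$ yields product $\alpha^{\sigma(U_1) - \sigma(U_2) + rn} = 1_G$, hence $\psi(U)$ is a product-one subsequence of $S$. If both $U$ and the complement $W \bdot U^{[-1]}$ admit such balanced partitions (with parameters $r$ and $k-r$), then $S = \psi(U) \bdot \psi(W \bdot U^{[-1]})$ factors into two non-trivial product-one pieces, contradicting minimality of $S$.

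The fundamental difference from the dihedral case is that $\psi(\overline{a} \bdot \overline{a}) = (\alpha^a\tau)^{[2]}$ has product $\alpha^n \neq 1_G$, so one does \emph{not} obtain $\supp(W_1) \cap \supp(W_2) = \emptyset$ for free; instead, the \textbf{A1}-pair at $r = 1$ corresponds to the inverse-pair relation $\psi(\overline{a} \bdot \overline{a+n}) = \alpha^a\tau \bdot \alpha^{a+n}\tau$, which is product-one since $(\alpha^a\tau)(\alpha^{a+n}\tau) = 1_G$. Since $|W| = 2k \geq 2n+2 > 2n$, the sequence $W$ has $\mathsf h(W) \geq 2$; picking either an inverse pair $U = \overline{a} \bdot \overline{a+n}$ (whenever such a pair occurs in $W$) as an $r = 1$ candidate, or $U = \overline{a}^{[4]}$ (when $\mathsf h(W) \geq 4$) split as $U_1 = U_2 = \overline{a}^{[2]}$ as an $r = 2$ candidate, I form $W' = W \bdot U^{[-1]}$ with $\ell = |W'|/2$ and apply the DeVos--Goddyn--Mohar theorem (Lemma \ref{3.1}) to $\Sigma_{\ell}(2W') \subseteq 2(\Z/2n\Z)$. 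The target value is $\sigma(W') + \ell n \in 2(\Z/2n\Z)$; if it belongs to $\Sigma_{\ell}(2W')$, then there is $T \t W'$ with $|T| = \ell$ and $2\sigma(T) \equiv \sigma(W') + \ell n \pmod{2n}$, producing a second \textbf{A1}-partition of $W'$ via the split $T \bdot (W' \bdot T^{[-1]})$, hence a contradiction. Therefore $H = \mathsf H(\Sigma_{\ell}(2W')) \subsetneq 2(\Z/2n\Z)$.

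The remaining analysis splits, as in Proposition \ref{3.2}, into two subcases according as $H$ is nontrivial or trivial. In the nontrivial case, the Erd\H{o}s--Ginzburg--Ziv constant $\mathsf s(H) = 2|H| - 1$ extracts several zero-sum subsequences of $2W'$ supported in cosets of $H$, which by pigeonhole give two with equal coset sum; matching zero-sums from $2W_1$ with zero-sums from $2W_2$ and absorbing the $rn$ shift yields a forbidden \textbf{A1}-partition. In the trivial case, the sequence $2W'$ is concentrated on a single value, and combining this with the maximality of $|H|$ across alternative choices of the auxiliary pair $U$ (cross-comparing the inverse-pair and the repeated-element variants) forces a unique $\overline{x} \in \Z/2n\Z$ with $\mathsf v_{\overline{x}}(W) = k+1$ and $|W| = 2n+2$. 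Translated back, $\mathsf v_{\alpha^x\tau}(S) = n+2$, $|S| = 2n+2$, and the residual subsequence $S_0$ of length $n$ is constrained by the product-one relation $(\alpha^x\tau)^{n+2}\, \pi^{*}(S_0^{*}) = 1_G$. Reducing this modulo $2n$ yields the explicit arithmetic conditions on $y$ and $y_1, \ldots, y_{n-3}$ displayed in (a) and (b); the dichotomy between (a) and (b) corresponds to whether $\supp(S_0)$ meets two cosets of the doubling map $\overline{u} \mapsto 2\overline{u}$ on $\Z/2n\Z$ (case (a), with an inverse pair $\alpha^y\tau, \alpha^{y+n}\tau$ inside $S_0$) or only one (case (b), with $S_0$ an $n$-fold repetition of a single term $\alpha^y\tau$).

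The main obstacle is the DeVos--Goddyn--Mohar analysis with the shifted target $\sigma(W') + \ell n$, whose interaction with $rn \bmod 2n$ depends on the parity of $r$, and the need to run the maximality-of-$|H|$ comparison symmetrically across both types of auxiliary pair. Once the main statement is in hand, the ``in particular'' assertion follows exactly as in Proposition \ref{3.2}: writing $S = S_1 \bdot S_2$ with $S_1 \in \mathcal F(\la\alpha\ra)$ and $|S_2| \geq 2n+4$, fix a product-one ordering of $S$ in which some $T_1 \t S_1$ precedes $\alpha^{x_1}\tau$ and $T_2 = S_1 \bdot T_1^{[-1]}$ precedes $\alpha^{x_2}\tau$; then $S'' = (\pi^{*}(T_1)\alpha^{x_1}\tau) \bdot (\pi^{*}(T_2)\alpha^{x_2}\tau) \bdot \prod^{\bullet}_{i\in[3,|S_2|]}\alpha^{x_i}\tau$ lies in $\mathcal A(G \setminus \la\alpha\ra)$ and has length $|S_2| \geq 2n+4$, violating the main conclusion $|S''| = 2n+2$.
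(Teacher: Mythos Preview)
Your overall strategy matches the paper's: translate to $W = W_1 \bdot W_2$ over $\Z/2n\Z$, set up the dicyclic analogue of \textbf{A1}, remove an auxiliary product-one piece $U$, and run DeVos--Goddyn--Mohar on $\Sigma_\ell(2W')$. However, your choice of auxiliary $U$ has a genuine gap. Your two options are (i) an inverse pair $\overline{a} \bdot \overline{a+n}$ whenever one occurs in $W$, or (ii) $\overline{a}^{[4]}$ when $\mathsf h(W) \ge 4$. Both can fail simultaneously: if no pair $\{a, a+n\}$ lies in $\supp(W)$, then $|\supp(W)| \le n$ and pigeonhole gives only $\mathsf h(W) \ge 3$, so $\mathsf h(W) = 3$ is possible (e.g.\ $n = 4$ and $W = \overline{0}^{[3]} \bdot \overline{1}^{[3]} \bdot \overline{2}^{[2]} \bdot \overline{3}^{[2]}$). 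The paper's remedy in its \textsc{Case 2} is to take $U = \overline{x}^{[2]} \bdot \overline{y}^{[2]}$ for two (possibly distinct) elements each of multiplicity $\ge 2$; since $(\alpha^x\tau)^2(\alpha^y\tau)^2 = \alpha^{2n} = 1_G$, this $\psi(U)$ is product-one and the analysis proceeds with $\ell = |W|/2 - 2$.

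Two further points your sketch does not address. First, in the inverse-pair branch the paper additionally requires $\mathsf v_{\overline{y}}(W) \ge 2$ (its \textsc{Case 1} hypothesis), so that a copy of $\overline{y}$ survives in $W'$; this is what lets one conclude $2\overline{y} \in H$ when $[2(\Z/2n\Z):H] = 2$ and hence that the target $\sigma(W') + \ell\overline{n}$ lies in $\Sigma_\ell(2W')$. Your bare ``inverse pair exists'' does not guarantee this. Second, you omit the key claim \textbf{A2} (namely $\min\{\mathsf v_{2\overline{g}}(2W_1), \mathsf v_{2\overline{g}}(2W_2)\} \le 1$ for every $\overline{g}$), which is the mechanism forcing the high-multiplicity value of $2W'$ to concentrate almost entirely in one of $2W_1, 2W_2$ and thereby pinning down the shape $(\alpha^x\tau)^{[n+2]} \bdot S_0$. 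Your description of the (a)/(b) dichotomy is also imprecise: case (a) does not restrict $\supp(S_0)$ to two doubling-cosets (the $y_i$ in (a) satisfy only $2y_i \not\equiv 2x$); rather, (a) arises from the paper's \textsc{Case 1} (an element of multiplicity $\ge 2$ whose $n$-shift is also present) and (b) from \textsc{Case 2}. Your ``in particular'' paragraph is correct and matches the paper.
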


\begin{proof}
For every $x \in \Z$, we set $\overline{x} = x + 2n \Z \in \Z/2n\Z$.
Let $S = \prod^{\bullet}_{i \in [1,|S|]} \alpha^{x_i}\tau \in \mathcal A (G)$ be of length $|S| \ge 2n + 2$ with $\alpha^{x_1}\tau \ldots \alpha^{x_{|S|}}\tau = 1_G$, where $x_1, \ldots, x_{|S|} \in [0, 2n-1]$.
Since $S \in \mathcal A (G)$, it follows that $|S|$ is even, and after renumbering if necessary, we set
\[
  W \, = \, \overline{x_1} \bdot \ldots \bdot \overline{x_{|S|}} \, = \, W_1 \bdot W_2 \, \in \, \mathcal F (\Z/2n\Z) \,,
\]
where $W_1 = \prod^{\bullet}_{i \in [1,|S|/2]} \overline{x_{2i-1}}$, and $W_2 = \prod^{\bullet}_{i \in [1,|S|/2]} \overline{x_{2i}}$. Thus we have that $\sigma(W_1) \, = \, \sigma(W_2) + |W_1|\overline{n}$. If we shift the sequence $W$ by $\overline{y}$ for some $y \in \Z$, then the corresponding sequence $S' = \prod^{\bullet}_{i \in [1,|S|]} \alpha^{x_i + y}\tau$ is still a minimal product-one sequence.
If $S'$ has the asserted structure, then the same is true for $S$ whence we may shift the sequence $W$ whenever this is convenient.
For every subsequence $U = \overline{y_1} \bdot \ldots \bdot \overline{y_{v}}$ of $W$, we denote by $\psi (U) = \alpha^{y_1}\tau \bdot \ldots \bdot \alpha^{y_v}\tau$ the corresponding subsequence of $S$.

\medskip
{\bf A1.} \, {\it Let $U = U_1 \bdot U_2$ be a subsequence of $W$ such that $|U_1| = |U_2|$ and $\sigma(U_1) = \sigma(U_2) + |U_1|\overline{n}$. Then $\psi (U)$ is a product-one sequence.}

\begin{proof}[Proof of {\bf A1}]
Suppose that $U_1 = \overline{y_1} \bdot \ldots \bdot \overline{y_{|U_1|}}$ and $U_2 = \overline{z_1} \bdot \ldots \bdot \overline{z_{|U_1|}}$. Since $\sigma(U_1) = \sigma(U_2) + |U_1|\overline{n}$, it follows that
\[
  \alpha^{z_1}\tau \alpha^{y_1}\tau \ldots \alpha^{z_{|U_1|}}\tau \alpha^{y_{|U_1|}}\tau \, = \, \alpha^{(z_1 + \ldots + z_{|U_1|}) - (y_1 + \ldots + y_{|U_1|}) + |U_1|n} \, = \, 1_G \,,
\]
whence $\psi (U)$ is a product-one sequence.
\end{proof}

\smallskip
If $\supp(W_1)\cap \big( \supp(W_2) + \overline{n} \big) \neq \emptyset$, say $\overline{x_1} = \overline{x_2} + \overline{n}$, then since $\sigma(W_1) = \sigma(W_2) + |W_1| \overline{n}$, it follows by {\bf A1} that $\psi(\overline{x_1} \bdot \overline{x_2})$ and $\psi \big( W \bdot (\overline{x_1} \bdot \overline{x_2})^{[-1]} \big)$ are both product-one sequences, a contradiction. Therefore $\supp(W_1) \cap \big( \supp(W_2) + \overline{n} \big) = \emptyset$, and since $|S| \ge 2n + 2$, it follows that $\mathsf h (W) \ge 2$.

\medskip
{\bf A2.} \, {\it $\min \big\{ \mathsf v_{2\overline{g}} (2W_1), \mathsf v_{2\overline{g}} (2W_2) \big\} \le 1$ for every $\overline{g} \in \Z/2n\Z$.}

\begin{proof}[Proof of {\bf A2}]
Assume to the contrary that there exists $\overline{g} \in \Z/2n\Z$ such that $\min \big\{ \mathsf v_{2\overline{g}} (2W_1), \mathsf v_{2\overline{g}} (2W_2) \big\} \ge 2$. Then, for each $i \in [1,2]$, we have $\mathsf v_{\overline{g}} (W_i) + \mathsf v_{\overline{g} + \overline{n}} (W_i) = \mathsf v_{2\overline{g}} (2W_i) \ge 2$. We may assume without loss of generality that $\mathsf v_{\overline{g}} (W_1) \ge 1$. Since $\supp(W_1) \cap \big( \supp(W_2) + \overline{n} \big) = \emptyset$, we must have $\mathsf v_{\overline{g} + \overline{n}} (W_2) = 0$, whence $\mathsf v_{\overline{g}} (W_2) \ge 2$. Since $\supp(W_1) \cap \big( \supp(W_2) + \overline{n} \big) = \emptyset$, we must have $\mathsf v_{\overline{g} + \overline{n}} (W_1) = 0$, whence $\mathsf v_{\overline{g}} (W_1) \ge 2$.
We set $U_1 = U_2 = \overline{g} \bdot \overline{g}$. It follows that $U_1 \t W_1$ and $U_2 \t W_2$ such that $|U_1| = |U_2|$ with $\sigma(U_1) = \sigma(U_2) + |U_1| \overline{n}$, and $|W_1 \bdot U^{[-1]}_1| = |W_2 \bdot U^{[-1]}_2|$ with $\sigma \big( W_1 \bdot U^{[-1]}_1 \big) = \sigma \big( W_2 \bdot U^{[-1]}_2 \big) + |W_1 \bdot U^{[-1]}_1| \overline{n}$. Thus {\bf A1} ensures that $\psi(U_1 \bdot U_2)$ and $\psi \big(W \bdot (U_1 \bdot U_2)^{[-1]} \big)$ are both product-one sequences, a contradiction.
\end{proof}

\medskip
\noindent
{\bf CASE 1.} \, There exists $\overline{y} \in \supp(W)$ such that $\mathsf v_{\overline{y}} (W) \ge 2$ and  $\overline{y} + \overline{n} \in \supp(W)$.
\smallskip

In view of $\supp(W_1) \cap \big(\supp(W_2) + \overline{n} \big) = \emptyset$, we may assume without loss of generality that $\overline{y} \bdot (\overline{y} + \overline{n}) \t W_1$. Let
\[
  W' \, = \, W \bdot \big( \overline{y} \bdot (\overline{y} + \overline{n}) \big)^{[-1]} \quad \und \quad \ell \, = \, \frac{|W'|}{2} \, = \, \frac{|W|}{2} - 1 \,.
\]

If $\sum_{\ell} (2W') = 2(\Z/2n\Z)$, then since $\sigma(W') + \ell \overline{n} = 2\sigma(W_2) + 2\ell \overline{n} - 2\overline{y} \in 2(\Z/2n\Z)$, it follows that there exits a subsequence $T \t W'$ of length $|T| = \ell$ such that $2 \sigma(T) = \sigma(W') + \ell \overline{n}$. Hence we infer that $\sigma(T) = \sigma(W' \bdot T^{[-1]}) + |T|\overline{n}$ and $|T| = |W' \bdot T^{[-1]}|$. Thus {\bf A1} ensures that $\psi \big(\overline{y} \bdot (\overline{y} + \overline{n}) \big)$ and $\psi (W')$ are both product-one sequences, a contradiction. Therefore $\sum_{\ell} (2W') \subsetneq 2(\Z/2n\Z)$.

Let $H = \mathsf H \big( \sum_{\ell} (2W') \big)$. By Lemma \ref{3.1}, we obtain that
\[
  |\Sigma_{\ell} (2W')| \,\, \ge \,\, \left( \sum_{g \in (2(\Z/2n\Z))/H} \min \{\ell, \mathsf v_g \big( \phi_H (2W') \big) \} - \ell + 1 \right)|H| \,.
\]
If $\mathsf h \big( \phi_H (2W') \big) \le \ell$, then
\[
  |\Sigma_{\ell} (2W')| \, \ge \, \big( |2W'| - \ell + 1 \big)|H| \, \ge \, n \, = \, |2(\Z/2n\Z)| \,,
\]
a contradiction.
If there exist distinct $g_1, g_2 \in (2(\Z/2n\Z))/H$ such that $\mathsf v_{g_k} \big( \phi_H (2W') \big) > \ell$ for all $k \in [1,2]$, then
\[
  |\Sigma_{\ell} (2W')| \, \ge \, (2\ell - \ell + 1)|H| \, \ge \, n \,= \, |2(\Z/2n\Z)| \,,
\]
a contradiction.
Thus there exists only one element, say $g \in (2(\Z/2n\Z))/H$, such that $\mathsf v_g \big( \phi_H (2W') \big) > \ell$, which implies that
\[
  \mathsf v_g \big( \phi_H (2W') \big) \, \ge \, |2W'| + 1 - \frac{|\Sigma_{\ell} (2W')|}{|H|} \, \ge \, |W'| + 2 - \frac{n}{|H|} \,.
\]

\medskip
\noindent
{\bf SUBCASE 1.1.} $H$ is non-trivial.
\smallskip

If $[2(\Z/2n\Z) : H] = 2$, then $\mathsf v_g \big( \phi_H (2W') \big) \ge |W'|$. We may assume by shifting if necessary that $\supp(2W') \subset H$, and hence $\supp(W') \subset 2(\Z/2n\Z)$. Since $\mathsf v_{\overline{y}} (W) \ge 2$, it follows that $\overline{y} \in \supp(W') \subset 2(\Z/2n\Z)$, whence $\sigma(W') + \ell \overline{n} = 2\sigma(W_2) - 2 \overline{y} \in H$. Thus there exists $T \t W'$ of length $|T| = \ell$ such that $2\sigma(T) = \sigma(W') + \ell \overline{n}$, and hence we infer that $\sigma(T) = \sigma(W' \bdot T^{[-1]}) + |T|\overline{n}$ and $|T| = |W' \bdot T^{[-1]}|$. It follows by {\bf A1} that $\psi \big( \overline{y} \bdot (\overline{y} + \overline{n}) \big)$ and $\psi (W')$ are both product-one sequences, a contradiction.

Therefore $[2(\Z/2n\Z) : H] \ge 3$, and hence $|H| \le \frac{n}{3}$. Since $\ell \ge n$, we have
\[
  \mathsf v_g \big( \phi_H (2W') \big) \, \ge \, \ell + 1 + (n + 1) - \frac{n}{|H|} \, \ge \, \ell + 2 + 3|H| - 3 \,.
\]
Then  $\min \big\{ \mathsf v_g \big( \phi_H (2W_1) \big), \mathsf v_g \big( \phi_H (2W_2) \big) \big\} \ge 3|H| - 2$, for otherwise, we obtain that
\[
  \mathsf v_g \big( \phi_H (2W') \big) \, \le \, \mathsf v_g \big( \phi_H (2W_1) \big) + \mathsf v_g \big( \phi_H (2W_2) \big) \, \le \,3|H| - 3 + \max \big\{ |W_1|, |W_2| \big\} \, \le \, 3|H| - 3 + \ell + 1 \,,
\]
a contradiction. Since $g \in (2(\Z/2n\Z))/H$, by shifting if necessary, we can assume that $g = H$, whence $|(2W_i)_H| \ge 3|H| - 2$ for all $i \in [1,2]$. It follows by $\mathsf s (H) = 2|H| - 1$ that there exist $U_1 \t W_1$ and $V_1 \t W_2$ of length $|U_1| = |V_1| = |H|$ such that $\sigma(U_1), \sigma(V_1) \in \{ \overline{0}, \overline{n} \}$. Therefore $|\big( 2W_1 \bdot (2U_1)^{[-1]}\big)_H| \ge 2|H| - 2$ and $|\big( 2W_2 \bdot (2V_1)^{[-1]}\big)_H| \ge 2|H| - 2$.

Suppose that there exist $U_2 \t W_1 \bdot U_1^{[-1]}$ and $V_2 \t W_2 \bdot V^{[-1]}_1$ with $|U_2| = |V_2| = |H|$ and $\sigma(U_2), \sigma(V_2) \in \{ \overline{0}, \overline{n} \}$. If there exits $i\in [1,2]$ such that $\sigma(U_i) = \sigma(V_i) + |H| \overline{n}$, then {\bf A1} implies that $\psi(U_i \bdot V_i)$ and $\psi \big(W \bdot (U_i \bdot V_i)^{[-1]} \big)$ are both product-one sequences, a contradiction. Otherwise, we have $\sigma(U_1 \bdot U_2) = \sigma(V_1 \bdot V_2) + 2|H| \overline{n}$, whence {\bf A1} ensures that $\psi(U_1 \bdot U_2 \bdot V_1 \bdot V_2)$ and $\psi \big( W \bdot (U_1 \bdot U_2 \bdot V_1 \bdot V_2)^{[-1]} \big)$ are both product-one sequences, a contradiction.

Assume that either $\big( 2W_1 \bdot (2U_1)^{[-1]} \big)_H$ or $\big( 2W_2 \bdot (2V_1)^{[-1]} \big)_H$ dose not contain a zero-sum subsequence of length $|H|$, say $2W_1 \bdot (2U_1)^{[-1]}$, which then forces $| \big( 2W_1 \bdot (2U_1)^{[-1]} \big)_H | = 2|H| - 2$. By \cite[Proposition 5.1.12]{Ge09a}, there exist $h_1, h_2 \in H$ with $\ord(h_1 - h_2) = |H|$ such that $\Big( 2W_1 \bdot (2U_1)^{[-1]} \Big)_H = h^{[|H|-1]}_1 \bdot h^{[|H|-1]}_2$. Then $\ord(h_1 - h_2) = |H|$ ensures that
\[
  H \, = \, \underset{|H|-1}{\underbrace{\{ h_1, h_2 \} + \ldots + \{ h_1, h_2 \}}} \, = \, \Sigma_{|H|-1} \big( h^{[|H|-1]}_1 \bdot h^{[|H|-1]}_2 \big) \,= \, \Sigma_{|H|-1} \big( (2W_1\bdot (2U_1)^{[-1]})_H \big) \,.
\]
Thus we infer that there exist subsequences $2U_3 \t 2W_1\bdot (2U_1)^{[-1]}$ and $2V_3 \t 2W_2 \bdot (2V_1)^{[-1]}$ such that $|2U_3| = |2V_3| = |H| - 1$ and $\sigma(2U_3) = \sigma(2V_3)$. Hence $\sigma(U_3) = \sigma(V_3)$ or $\sigma(U_3) = \sigma(V_3) + \overline{n}$. If there exits $i \in \{1,3\}$ such that $\sigma(U_i) = \sigma(V_i) + |U_i| \overline{n}$, then {\bf A1} implies that $\psi(U_i \bdot V_i)$ and $\psi \big(W \bdot (U_i \bdot V_i)^{[-1]} \big)$ are both product-one sequences, a contradiction. Otherwise, we have $\sigma(U_1 \bdot U_3) = \sigma(V_1 \bdot V_3) + (2|H|-1) \overline{n}$, whence {\bf A1} ensures that $\psi(U_1 \bdot U_3 \bdot V_1 \bdot V_3)$ and $\psi \big( W \bdot (U_1 \bdot U_3 \bdot V_1 \bdot V_3)^{[-1]} \big)$ are both product-one sequences, a contradiction.

\medskip
\noindent
{\bf SUBCASE 1.2.} \, $H$ is trivial.
\smallskip

Since $\ell = \frac{|W'|}{2} \ge n$, it follows that $\mathsf v_g (2W') \ge |W'| + 2 - n \ge \ell + 2$. Hence {\bf A2} ensures that $\min \big\{ \mathsf v_g (2W_1), \mathsf v_g (2W_2) \big\} = 1$. If $g = 2\overline{y}$, it follows by $\overline{y} \bdot (\overline{y} + \overline{n}) \t W_1$ that $\mathsf v_g (2W_2) = 1$, whence $\ell + 2 \le \mathsf v_g (2W') = \mathsf v_g (2W_1) -2 + 1 \le \ell$, a contradiction. Thus $g \neq 2\overline{y}$. Since
\[
  \ell + 2 \, \le \, \mathsf v_g (2W') \, = \, \mathsf v_g \Big( 2 \big(W_1 \bdot (\overline{y} \bdot (\overline{y} + \overline{n}))^{[-1]} \big) \Big) + \mathsf v_g (2W_2) \,,
\]
we have $\mathsf v_g (2W_1) = 1$ and $\mathsf v_g (2W_2) = \ell + 1$.  Then $\mathsf v_g (2W') = \ell + 2$. If $|W| \ge 2n + 4$, then $\ell \ge n + 1$, and hence $\mathsf v_g (2W') \ge |W'| + 2 - n \ge \ell + 3$, a contradiction. Therefore $|W| = 2n + 2$, $\ell = n$, $2W_2 = (2\overline{x})^{[n+1]}$, and $\mathsf v_{2\overline{x}} (2W_1) = 1$ for some $\overline{x} \in \Z/2n\Z$ with $2\overline{x} = g \neq 2\overline{y}$.

Since $\supp(W_1) \cap \big( \supp(W_2) + \overline{n} \big) = \emptyset$, we may assume that $W_2 = \overline{x}^{[n+1]}$ and $\mathsf v_{\overline{x}} (W_1) = 1$. It follows by $\mathsf v_{\overline{y}} (W) \ge 2$ and $|W_1| = n + 1$ that $\overline{x} \bdot \overline{y} \bdot \overline{y} \bdot (\overline{y} + \overline{n}) \t W_1$. Then $n \ge 3$ and
\[
  W \, = \, W_1 \bdot W_2 \, = \, (\overline{x} \bdot T) \bdot \overline{x}^{[n+1]} \,,
\]
where $T \in \mathcal F \big( \Z/2n\Z \big)$ with $|T| = n$ such that $2\overline{x} \notin \supp(2T)$ and $\overline{y}^{[2]} \bdot (\overline{y} + \overline{n}) \t T$. Since $\sigma(W_1) = \sigma(W_2) + |W_1| \overline{n}$, it follows that $\sigma(T) + \overline{x} \, = \, (n + 1)\overline{x} + (n + 1)\overline{n}$. Therefore $\psi(W)$ is the desired sequence for (a).

\medskip
\noindent
{\bf CASE 2.} \, For every $\overline{x} \in \supp(W)$ with $\mathsf v_{\overline{x}} (W) \ge 2$, we have that $\overline{x} + \overline{n} \notin \supp(W)$.
\smallskip

If $\mathsf h (2W) \le 2$, then we have
\[
  2n + 2 \, \le \, |W| \, = \, |2W| \, \le \, \mathsf h (2W) |2(\Z/2n\Z)| \, \le \, 2n \,,
\]
a contradiction, and from the case hypothesis, we have  $\mathsf h (W) = \mathsf h (2W) \ge 3$. Let $\overline{x} \in \supp(W)$ be an element with $\mathsf v_{\overline{x}} (W) = \mathsf h (W) \ge 3$, and assume without loss of generality that
\[
  \mathsf v_{\overline{x}} (W_1) \, \ge \, \mathsf v_{\overline{x}} (W_2) \quad \mbox{ with } \,\, \mathsf v_{\overline{x}} (W_1) \, \ge \, 2 \,.
\]
If $\mathsf h (W \bdot (\overline{x} \bdot \overline{x})^{[-1]}) \le 1$, then it follows by the case hypothesis that
\[
  2n \, \le \, |W| - 2 \, = \, |W \bdot (\overline{x} \bdot \overline{x})^{[-1]}| \, \le \, |(\Z/2n\Z) \setminus \{ \overline{x} + \overline{n} \}| \, = \, 2n - 1 \,,
\]
a contradiction, whence $\mathsf h (W \bdot (\overline{x} \bdot \overline{x})^{[-1]}) \ge 2$. Let $\overline{y} \in \supp(W \bdot (\overline{x} \bdot \overline{x})^{[-1]})$ be an element with $\mathsf v_{\overline{y}} (W \bdot (\overline{x} \bdot \overline{x})^{[-1]}) \ge 2$, and let
\[
  W' \, = \, W \bdot (\overline{x} \bdot \overline{x} \bdot \overline{y} \bdot \overline{y})^{[-1]} \quad \und \quad \ell \, = \, \frac{|W'|}{2} \, = \, \frac{|W|}{2} - 2 \,.
\]
Suppose in addition that $\overline{y}$ is chosen to satisfy either that $\mathsf v_{\overline{y}} (W \bdot (\overline{x} \bdot \overline{x})^{[-1]}) = \mathsf h (W \bdot (\overline{x} \bdot \overline{x})^{[-1]})$, or that both $\mathsf v_{\overline{y}} (W_2) \ge 3$ and $\mathsf h (W) \le \ell + 2$.

If $\sum_{\ell} (2W') = 2(\Z/2n\Z)$, then since $\sigma(W') + \ell \overline{n} = 2\sigma(W_2) + (2\ell + 2)\overline{n} - 2\overline{x} - 2\overline{y} \in 2(\Z/2n\Z)$, it follows that there exists a subsequence $T \t W'$ of length $|T| = \ell$ such that $2\sigma(T) = \sigma(W') + \ell \overline{n}$. Hence we infer $\sigma(T) = \sigma(W' \bdot T^{[-1]}) + |T|\overline{n}$ and $|T| = |W' \bdot T^{[-1]}|$. Thus {\bf A1} ensures that $\psi \big( \overline{x}^{[2]} \bdot \overline{y}^{[2]} \big)$ and $\psi (W')$ are both product-one sequences, a contradiction. Therefore $\sum_{\ell} (2W') \subsetneq 2(\Z/2n\Z)$.

Let $H = \mathsf H \big( \sum_{\ell} (2W') \big)$. As at the start of the proof of {\bf CASE 1}, it follows by Lemma \ref{3.1} that there exists only one element, say $g \in (2(\Z/2n\Z))/H$, such that $\mathsf v_{g} \big( \phi_H (2W') \big) \ge \ell + 1$, which implies that
\[
  \mathsf v_g \big( \phi_H (2W') \big) \, \ge \, |2W'| + 1 - \frac{|\Sigma_{\ell} (2W')|}{|H|} \, \ge \, |W'| + 2 - \frac{n}{|H|} \,.
\]

\medskip
\noindent
{\bf SUBCASE 2.1.} \, $H$ is non-trivial.
\smallskip

If $n = 2$, then $H \subset 2(\Z/4\Z)\cong C_2$ implies that $H = 2(\Z/4\Z)$, whence $\sum_{\ell} (2W') = 2(\Z/4\Z)$, a contradiction. Thus we can assume that $n \ge 3$.

If $[2(\Z/2n\Z) : H] = 2$, then $\mathsf v_g \big( \phi_H (2W') \big) \ge |W'|$. We may assume by shifting if necessary that $\supp(2W') \subset H$, and hence $\supp(W') \subset 2(\Z/2n\Z)$.
We assert that $\sigma(W') + \ell \overline{n} = 2\sigma(W_2) - 2 \overline{x} - 2 \overline{y} \in H$. Clearly this holds true for $\overline{x} = \overline{y}$. Suppose  $\overline{x} \neq \overline{y}$. Since $\mathsf v_{\overline{x}} (W) = \mathsf h (W) \ge 3$, it follows that $\overline{x} \in \supp(W') \subset 2(\Z/2n\Z)$. If $\mathsf v_{\overline{y}} (W_2) \ge 3$, then $\overline{y} \in \supp(W') \subset 2(\Z/2n\Z)$. Suppose that $\mathsf v_{\overline{y}} (W \bdot (\overline{x} \bdot \overline{x})^{[-1]}) = \mathsf h (W \bdot (\overline{x} \bdot \overline{x})^{[-1]})$, and we need to verify $\overline{y} \in \supp(W') \subset 2(\Z/2n\Z)$. If $\mathsf h (2W') \le 2$, then
\[
  2n - 2 \, \le \, |W'| \, = \, |2W'| \, \le \, \mathsf h (2W') |H| \, \le \, n \,,
\]
a contradiction to $n\ge 3$. Hence, in view of the main case hypothesis, we have $\mathsf h (W') = \mathsf h (2W') \ge 3$. Since $\mathsf h (W \bdot (\overline{x} \bdot \overline{x})^{[-1]}) \ge \mathsf h (W') \ge 3$, it follows that $\overline{y} \in \supp(W') \subset 2(\Z/2n\Z)$. Thus $\sigma(W') + \ell \overline{n} \in H$, which implies that there exists a subsequence $T \t W'$ of length $|T| = \ell$ such that $2\sigma(T) = \sigma(W') + \ell \overline{n}$. Then $\sigma(T) = \sigma(W' \bdot T^{[-1]}) + |T|\overline{n}$ and $|T| = |W' \bdot T^{[-1]}|$. It follows by {\bf A1} that $\psi \big( \overline{x}^{[2]} \bdot \overline{y}^{[2]} \big)$ and $\psi (W')$ are both product-one sequences, a contradiction.

Therefore $[2(\Z/2n\Z) : H] \ge 3$, and hence $|H| \le \frac{n}{3}$. Since $\ell = \frac{|W'|}{2} \ge n - 1$, we have
\[
  \mathsf v_g \big( \phi_H (2W') \big) \, \ge \, \ell+1 + n - \frac{n}{|H|} \, \ge \, \ell + 1 + 3|H| - 3 \,.
\]
We assert that $\min \big\{ \mathsf v_g \big( \phi_H (2W_1) \big), \mathsf v_g \big( \phi_H (2W_2) \big) \big\} \ge 3|H| - 2$. Assume to the contrary that
\[
  \min \big\{ \mathsf v_g \big( \phi_H (2W_1) \big), \mathsf v_g \big( \phi_H (2W_2) \big) \big\} \, \le \, 3|H| - 3 \,.
\]
If $\mathsf v_g \big( \phi_H (2W_2) \big) \le \ell$, then $\mathsf v_{\overline{x}} (W_1) \ge 2$ implies that
\[
  \mathsf v_g \big( \phi_H (2W') \big) \, \le \, \mathsf v_g \Big( \phi_H \big(2(W_1 \bdot (\overline{x} \bdot \overline{x})^{[-1]}) \big) \Big) + \mathsf v_g \big( \phi_H (2W_2) \big) \, \le \, \ell + 3|H|-3 \,,
\]
a contradiction. Thus $\mathsf v_g \big( \phi_H (2W_2) \big) \ge \ell+1 \ge n$, and hence $\mathsf h (2W_2) \ge \frac{n}{|H|} \ge 3$. The main case hypothesis ensures that $\mathsf h (W_2) = \mathsf h (2W_2) \ge 3$.
If $\mathsf v_{\overline{y}} (W_2) \ge 2$, then
\[
  \mathsf v_g \big( \phi_H (2W') \big) \, = \, \mathsf v_g \Big( \phi_H \big(2(W_1 \bdot (\overline{x} \bdot \overline{x})^{[-1]} \big) \Big) + \mathsf v_g \Big( \phi_H \big(2(W_2 \bdot (\overline{y} \bdot \overline{y})^{[-1]} \big) \Big) \, \le \, \ell + 3|H| - 3 \,,
\]
a contradiction. Suppose that $\mathsf v_{\overline{y}} (W_2) \le 1$. Then we infer that $\mathsf v_{\overline{y}} (W \bdot (\overline{x} \bdot \overline{x})^{[-1]}) = \mathsf h (W \bdot (\overline{x} \bdot \overline{x})^{[-1]})$. It follows by $\mathsf h (W_2) \ge 3$ that there exists $\overline{z} \in \supp(W_2)$ with $\mathsf v_{\overline{z}} (W_2) = \mathsf h(W_2) \ge 3$. Then we assert that $\mathsf v_{\overline{x}} (W) = \mathsf h (W) \le \ell + 2$. Assume to the contrary that $\mathsf v_{\overline{x}} (W) = \mathsf h (W) \ge \ell + 3$.
Since $\mathsf v_{\overline{x}} (W_1) \ge 2$, {\bf A2} implies $W_1 = \overline{x}^{[\ell + 2]}$ with $\mathsf v_{\overline{x}} (W_2) = 1$, whence $\overline{y} = \overline{x}$. By the main case hypothesis, we have $\mathsf v_{2\overline{x}} (2W') = \mathsf v_{\overline{x}} (W') = \ell - 1$. Since $g \in (2(\Z/2n\Z))/H$ is the only element satisfying $\mathsf v_g \big( \phi_H (2W') \big) \ge \ell + 1 \ge 3$, it follows again by the main case hypothesis that $g = 2 \overline{z}$, $W_2 = \overline{x} \bdot \overline{z}^{[\ell+1]}$, and $\mathsf v_{\overline{z}} (W \bdot (\overline{x} \bdot \overline{x})^{[-1]}) = \mathsf h (W \bdot (\overline{x} \bdot \overline{x})^{[-1]})$. By swapping the role between $\overline{y}$ and $\overline{z}$, the argument used in the case above when $\mathsf v_{\overline{y}} (W_2) \ge 2$ leads to a contradiction. Thus $\mathsf v_{\overline{x}} (W) = \mathsf h (W) \le \ell + 2$, and then the swapping argument again leads to a contradiction.
Since $g \in (2(\Z/2n\Z))/H$, by shifting if necessary, we can assume that $g = H$, whence $|(2W_i)_H| \ge 3|H| - 2$ for all $i \in [1,2]$. By the same lines of the proof of {\bf SUBCASE 1.1}, we get a contradiction to $S \in \mathcal A (G)$.

\medskip
\noindent
{\bf SUBCASE 2.2.} \, $H$ is trivial.
\smallskip

Since $\ell = \frac{|W'|}{2} \ge n - 1$, it follows that $\mathsf v_g (2W') \, = \,\mathsf v_g \big( \phi_H (2W') \big) \, \ge \, |W'| + 2 - n \, \ge \, \ell + 1$, and by {\bf A2},
\[
  \mathsf h (2W) \, = \, \mathsf v_{2\overline{x}} (2W) \, = \,\mathsf v_{2\overline{x}} (2W_1) + \mathsf v_{2\overline{x}} (2W_2) \, \le \, (\ell + 2) + 1 \, = \, \ell + 3 \,.
\]
Thus we have $\mathsf v_{2\overline{x}} (2W') \le \mathsf v_{2\overline{x}} (2W) - 2 \le \ell + 1$.

Suppose that $\ell = 1$. Then $|W| = 6$, $n = 2$, and $|2W'| = 2$. Hence $\mathsf v_g (2W') = 2$ and $W = \overline{x}^{[2]} \bdot \overline{y}^{[2]} \bdot \overline{w_1} \bdot \overline{w_2}$ for some $\overline{w_1}, \overline{w_2} \in \Z/2n\Z$ with $2\overline{w_1} = 2\overline{w_2} = g$. If $\overline{w_1} = \overline{w_2} + \overline{n}$, then $\psi(\overline{w_1} \bdot \overline{w_2})$ and $\psi \big(\overline{x}^{[2]} \bdot \overline{y}^{[2]} \big)$ are both product-one sequences, a contradiction. Therefore $\overline{w_1} = \overline{w_2}$. Since $\ord (\alpha^{i}\tau) = 4$ for all $i \in [0, 2n-1]$ and $\psi(W)$ is a product-one sequence, we obtain that $| \{ \overline{x}, \overline{y}, \overline{w_1} \} | \ge 2$. Since $\mathsf v_{\overline{x}} (W) = \mathsf h (W) \ge 3$ and $\mathsf h \big( W \bdot (\overline{x} \bdot \overline{x})^{[-1]} \big) \ge 2$, it follows that either $\overline{x} = \overline{y}$ or $\overline{x} = \overline{w_1}$. Since $\sigma(W_1) = \sigma(W_2) + |W_1| \overline{n}$, we have
\[
  W \, = \, W_1 \bdot W_2 \, = \, \overline{x}^{[3]} \bdot \big( \overline{x} \bdot \overline{w}^{[2]} \big)
\]
for some $\overline{w} \in \Z/4\Z$ with $2\overline{w} \neq 2\overline{x}$. Thus $\psi(W)$ is the desired sequence for (b).

Suppose that $\ell \ge 2$. We assume to the contrary that $\mathsf v_{\overline{y}} (W_2) \ge 3$ and $\mathsf h (W) \le \ell + 2$. Since $\mathsf v_{2\overline{x}} (2W) = \mathsf v_{\overline{x}} (W) \le \ell + 2$, it follows that $\mathsf v_{2\overline{x}} (2W') \le \ell$, whence $g \neq 2 \overline{x}$. In view of $\mathsf v_{\overline{x}} (W_1) \ge 2$, $\mathsf v_{\overline{y}} (W_2) \ge 2$, and {\bf A2}, we must have $2 \overline{y} \neq 2 \overline{x}$. Let $g = 2 \overline{z}$ for some $\overline{z} \in \Z/2n\Z$.
If $g \neq 2 \overline{y}$, then by the main case hypothesis, $\overline{x}$, $\overline{y}$ and $\overline{z}$ are all distinct elements with $\mathsf v_{\overline{x}} (W) \ge \mathsf v_{\overline{z}} (W) \ge \ell +1$ and $\mathsf v_{\overline{y}} (W) \ge 3$, implying $2\ell + 4 = |W| \ge 2(\ell + 1) + 3 = 2\ell + 5$, a contradiction. Thus $g = 2 \overline{y}$, and again by the main case hypothesis, we have $\overline{z} = \overline{y}$. Hence $\mathsf v_{\overline{y}} \big( W \bdot (\overline{x} \bdot \overline{x})^{[-1]} \big) = \mathsf v_{\overline{z}} (W') + 2 \ge \ell + 3$, contradicting that $\mathsf h (W) \le \ell + 2$.

Therefore $\mathsf v_{\overline{y}} \big( W \bdot (\overline{x} \bdot \overline{x})^{[-1]} \big) = \mathsf h \big( W \bdot (\overline{x} \bdot \overline{x})^{[-1]} \big)$, and in view of the main case hypothesis, we have
\[
  3 \, \le \, \ell + 1 \, \le \, \mathsf v_g (2W') \, \le \, \mathsf v_{2\overline{y}} \big( 2 \big( W \bdot (\overline{x} \bdot \overline{x})^{[-1]} \big) \big) \, \le \, \mathsf v_{2\overline{x}} (2W) \,.
\]
Then it follows by $|2W| = 2 \ell + 4$ and $\mathsf h (2W) \le \ell + 3$ that $| \{ 2\overline{x}, 2\overline{y}, g \} | = 2$.
If $2\overline{y} = g$, then $2\overline{x} \neq 2\overline{y}$ and $\mathsf v_{2\overline{x}} (2W) \ge \mathsf v_{2\overline{y}} (2W) \ge \ell + 3$, whence $2 \ell + 4 = |2W| \ge 2 \ell + 6$, a contradiction. Thus $2\overline{y} \neq g$.

If $2\overline{x} = 2\overline{y}$, then $\mathsf v_{2\overline{x}} (2W) = 2 + \mathsf v_{2\overline{y}} \big( 2 \big( W \bdot (\overline{x} \bdot \overline{x})^{[-1]} \big)\big) \ge \ell + 3$ implies that $\mathsf v_{2\overline{x}} (2W) = \ell + 3$ and $\mathsf v_g (2W') = \ell + 1$. If $|W| \ge 2n + 4$, then $\ell \ge n$, and hence $\ell + 1 = \mathsf v_g (2W') \ge |W'| + 2 - n \ge \ell + 2$, a contradiction. Thus $|W| = 2n + 2$ and $\ell = n - 1$. Since $\mathsf v_{\overline{x}} (W_1) \ge \mathsf v_{\overline{x}} (W_2)$, we have $\mathsf v_{2\overline{x}} (2W_1) \ge \mathsf v_{2\overline{x}} (2W_2)$, and hence {\bf A2} ensures that $\mathsf v_{2\overline{x}} (2W_2) = 1$. It follows in view of the main case hypothesis that
\[
  W \, = \, W_1 \bdot W_2 \, = \, \overline{x}^{[n+1]} \bdot \big( \overline{x} \bdot \overline{z}^{[n]} \big) \,,
\]
where $\overline{z} \in \Z/2n\Z$ with $2\overline{z} = g \neq 2\overline{x}$. Since $\sigma(W_1) = \sigma(W_2) + |W_1| \overline{n}$, we have $nz + x \equiv (n+1)(x+n) \pmod{2n}$. Therefore $\psi(W)$ is the desired sequence for (b).

If $2\overline{x} = g$, then $\mathsf v_{2\overline{x}} (2W) \ge 2 + \mathsf v_g (2W') \ge \ell + 3$ implies that $\mathsf v_{2\overline{x}} (2W) = \ell + 3$ and $\mathsf v_{2\overline{y}} (2W) = \ell + 1$. The same argument as shown above ensures that $W = W_1 \bdot W_2 = \overline{x}^{[n+1]} \bdot \big( \overline{x} \bdot \overline{y}^{[n]} \big)$, where $\overline{x}, \overline{y} \in \Z/2n\Z$ with $2\overline{x} \neq 2\overline{y}$, and $ny + x \equiv (n+1)(x+n) \pmod{2n}$. Thus $\psi(W)$ is the desired sequence for (b).

\smallskip
To prove the ``In particular'' statement, we assume to the contrary that there exists a minimal product-one sequence $S$ such that $S = S_1 \bdot S_2$, where $S_1 \in \mathcal F \big( \la \alpha \ra \big)$ and $S_2 \in \mathcal F \big( G \setminus \la \alpha \ra \big)$ of length $|S_2| \ge 2n + 4$. Then we suppose that $S_2 = \prod^{\bullet}_{i \in [1,|S_2|]} \alpha^{x_i}\tau$ and $S_1 = T_1 \bdot T_2$ such that $\pi^{*} (T_1) (\alpha^{x_1}\tau) \pi^{*} (T_2) (\alpha^{x_2}\tau \ldots \alpha^{x_{|S_2|}}\tau) = 1_G$. Since $S \in \mathcal A (G)$, it follows that
\[
  S'' \, = \, \big( \pi^{*} (T_1)\alpha^{x_1}\tau \big) \bdot \big( \pi^{*} (T_2)\alpha^{x_2}\tau \big) \bdot \Big( {\small \prod}^{\bullet}_{i \in [3,|S_2|]} \alpha^{x_i}\tau \Big) \, \in \, \mathcal A \big( G \setminus \la \alpha \ra \big)
\]
and $|S''| = |S_2| \ge 2n + 4$, a contradiction to the main statement.
\end{proof}


\bigskip
\section{The main results} \label{4}
\bigskip

\smallskip
\begin{theorem} \label{4.1}~
Let $G$ be a dihedral group of order $2n$, where $n \in \N_{\ge 3}$ is odd.
A sequence $S$ over $G$ of length $\mathsf D (G)$ is a minimal product-one sequence if and only if it has one of the following two forms{\rm \,:}
\begin{enumerate}
\item[(a)] There exist $\alpha, \, \tau \in G$ such that $G = \la \alpha, \tau \t \alpha^{n} = \tau^{2} = 1_G \und \tau\alpha = \alpha^{-1}\tau \ra$ and $S = \alpha^{[2n-2]} \bdot \tau^{[2]}$.

\smallskip
\item[(b)] There exist $\alpha, \, \tau \in G$ and $i, j \in [0, n-1]$ with $\gcd (i-j, n) = 1$ such that $G = \la \alpha, \tau \t \alpha^{n} = \tau^{2} = 1_G \und \tau\alpha = \alpha^{-1}\tau \ra$ and $S = (\alpha^{i}\tau)^{[n]} \bdot (\alpha^{j}\tau)^{[n]}$.
\end{enumerate}
\end{theorem}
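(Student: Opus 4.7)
The plan is to establish both implications. For sufficiency, I check that the sequences in (a) and (b) are atoms by exhibiting explicit product-one orderings and analyzing possible proper factorizations. For (a), the ordering $\alpha^{[n-1]} \bdot \tau \bdot \alpha^{[n-1]} \bdot \tau$ evaluates to $\alpha^{n-1} \tau \alpha^{n-1} \tau = 1_G$, and any non-trivial product-one subsequence $T$ of $S$ must have an even number of $\tau$-terms. If $T$ contains no $\tau$, then $T = \alpha^{[n]}$ is the only non-trivial option, and its complement $\alpha^{[n-2]} \bdot \tau^{[2]}$ fails to be product-one since the resulting exponent equation $2a_2 \equiv n-2 \pmod{n}$ forces $a_2 \equiv n-1 \pmod n$, incompatible with $a_2 \in [0, n-2]$; a symmetric argument rules out $T$ containing both $\tau$'s. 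For (b), the ordering $(\alpha^i \tau \bdot \alpha^j \tau)^n$ evaluates to $(\alpha^{i-j})^n = 1_G$. Using that the product of any ordering $\alpha^{y_1}\tau \cdots \alpha^{y_{2m}}\tau$ of reflections equals $\alpha^{y_1 - y_2 + \cdots - y_{2m}}$, any product-one subsequence yields an alternating-sum of the form $p(i-j)$ with $|p| < n$; the coprimality $\gcd(i-j, n) = 1$ forces $p = 0$, so such a subsequence has $2a$ copies of $\alpha^i\tau$ and $2b$ copies of $\alpha^j\tau$. The complement would have $(n-2a, n-2b)$ copies, both odd since $n$ is odd, which cannot be product-one.

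For necessity, write $S = S_1 \bdot S_2$ with $S_1 \in \mathcal F(\la \alpha \ra)$ and $S_2 \in \mathcal F(G \setminus \la \alpha \ra)$. Applying the abelianization $G \to G/[G,G] \cong C_2$ forces $|S_2|$ to be even, and $|S_2| \ne 0$ since $\mathsf D(C_n) = n < 2n$. The central case is $|S_2| = 2n$: setting $W = {\small\prod}^{\bullet}_i x_i \in \mathcal F(C_n)$ where $S = {\small\prod}^{\bullet}_i \alpha^{x_i}\tau$, the product-one-ness of $S$ is equivalent to $W$ admitting a balanced partition $W = W_1 \bdot W_2$ with $|W_1| = |W_2| = n$ and $\sigma(W_1) = \sigma(W_2)$, i.e., $\sigma(W)/2 \in \Sigma_n(W)$, with division by $2$ well-defined in $C_n$ since $n$ is odd. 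Atomicity of $S$ then means $W$ admits no proper non-trivial factorization into two balanced sub-partitions. I would adapt the strategy of Proposition \ref{3.2}, now simplified by the fact that multiplication by $2$ is invertible in $C_n$: remove a repeated pair of terms (which exists since $|W| = 2n > n$), apply Lemma \ref{3.1} to the doubled sequence $2W'$, and analyze the stabilizer $H = \mathsf H(\Sigma_{n-1}(2W'))$; a careful case analysis on $H$ (trivial versus non-trivial) yields $\mathsf h(W) = n$ and $W$ having exactly two distinct values of multiplicity $n$ each. The condition $\gcd(i-j, n) = 1$ emerges as the precise obstruction to any further balanced sub-partition of $W = i^{[n]} \bdot j^{[n]}$, giving form (b).

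The remaining case $2 \le |S_2| \le 2n - 2$, with $|S_2| = 2k$, requires an alternating-sum-and-sign analysis: each $\alpha$-element of $S_1$ independently contributes $\pm c_j$ to the exponent depending on its placement among the reflections, so $S$ is product-one iff there exist $T \mid S_1$ and an ordering of $S_2$ with alternating sum $a$ satisfying $2\sigma(T) \equiv \sigma(S_1) + a \pmod n$. For $k \ge 2$, the set of achievable alternating sums of $S_2$ is large, and the richness of the subseq-sum set of $S_1$ (via Davenport-constant arguments using $|S_1| = 2n-2k$) allows constructing two disjoint non-trivial product-one subsequences of $S$, contradicting atomicity. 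For $k = 1$, writing $S_2 = g \bdot h$, the atomicity conditions force $S_1 = \alpha^{[2n-2]}$ for some generator $\alpha$ of $\la \alpha \ra$ and $g = h$: otherwise, given any non-trivial zero-sum $R \mid S_1$, the complement $S_1 \bdot R^{[-1]}$ has too many subseq sums and hits one of the two product-one targets $(\sigma(S_1) \pm (b-a))/2$, producing a factorization. A change of reflection generator $\tau' := g$ then brings $S$ into form (a). The main obstacles are the $|S_2| = 2n$ case, which requires adapting the stabilizer analysis of Proposition \ref{3.2} to the odd-$n$ setting, and the $k \ge 2$ contradiction, which requires a delicate pigeon-hole estimate marrying the alternating-sum structure of $S_2$ with the subseq-sum structure of $S_1$.
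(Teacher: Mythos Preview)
Your sufficiency arguments for (a) and (b) are fine, and your overall case split on $|S_2| = |S_{G_0}|$ matches the paper's. However, in each of the three cases your route is substantially heavier than the paper's, and in places only sketched; let me point out the simpler arguments you are missing.

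For $|S_{G_0}| = 2n$, the paper does not redo a Proposition~\ref{3.2}-style stabilizer analysis at all. Instead, writing $S = \alpha^{k_1}\tau \bdot \alpha^{\ell_1}\tau \bdot \ldots \bdot \alpha^{k_n}\tau \bdot \alpha^{\ell_n}\tau$ with this ordered product equal to $1_G$, one observes that $S' = \alpha^{k_1 - \ell_1} \bdot \ldots \bdot \alpha^{k_n - \ell_n}$ is a \emph{minimal} product-one sequence over $\la\alpha\ra \cong C_n$ of length exactly $n = \mathsf D(C_n)$; Lemma~\ref{2.2}.1 immediately gives $k_i - \ell_i \equiv c \pmod n$ for all $i$ with $\gcd(c,n)=1$. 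Then a single-swap trick --- replacing $\alpha^{k_j}\tau$ and $\alpha^{k_{j+1}}\tau$ (which commute past $\alpha^{\ell_j}\tau$ in the product) --- yields another minimal length-$n$ sequence over $C_n$, forcing $k_{j+1} - \ell_j \equiv c$ as well, hence $k_j = k_{j+1}$. This gives form (b) in a few lines, whereas your proposed adaptation of Lemma~\ref{3.1} would be considerably longer.

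For $|S_{G_0}| \in [4, 2n-2]$, the paper's argument is again short: write $S = T_1 \bdot \tau \bdot T_2 \bdot T_3 \bdot (\alpha^x\tau)$ with $T_3' \in \mathcal F(\la\alpha\ra)$ the sequence of pairwise products of $T_3$, so that $|T_1| + |T_2 \bdot T_3'| \ge n$. The sets $H_1, H_2 \subset \la\alpha\ra \setminus \{1_G\}$ of consecutive partial products of $T_1$ and of $T_3' \bdot T_2$ satisfy $|H_1| + |H_2| \ge n$, so they meet, and any common element produces a proper product-one subsequence of $S$ containing at least two reflections. This replaces your unspecified ``delicate pigeon-hole estimate marrying the alternating-sum structure of $S_2$ with the subseq-sum structure of $S_1$''.

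For $|S_{G_0}| = 2$, the paper writes $S = T_1 \bdot \tau \bdot T_2 \bdot (\alpha^x\tau)$ with this ordered product equal to $1_G$; since $T_1$ and $T_2$ must each be product-one free over $\la\alpha\ra$, both have length $\le n-1$, hence exactly $n-1$. Lemma~\ref{2.2}.1 then pins down $T_1 = \alpha^{[n-1]}$ and $T_2 = (\alpha^j)^{[n-1]}$, and a short parity analysis on $j$ (even vs.\ odd) rules out $j \ne 1$. Your sketch (``given any non-trivial zero-sum $R \mid S_1$, the complement has too many subseq sums'') does not obviously handle the configuration $T_1 = \alpha^{[n-1]}$, $T_2 = (\alpha^j)^{[n-1]}$ with $j \ne 1$, which is precisely what needs to be excluded; the paper's parity argument does this cleanly.

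In summary: your plan is viable but the paper exploits, in each case, the reduction to minimal zero-sum sequences of \emph{maximal} length over $C_n$ (where Lemma~\ref{2.2}.1 gives a one-line classification), together with the swap identity $\alpha^{k_j}\tau\,\alpha^{\ell_j}\tau\,\alpha^{k_{j+1}}\tau = \alpha^{k_{j+1}}\tau\,\alpha^{\ell_j}\tau\,\alpha^{k_j}\tau$. These two ingredients make the necessity proof dramatically shorter than a Proposition~\ref{3.2}-style analysis.
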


\begin{proof}
We fix $\alpha, \tau \in G$ such that $G = \la \alpha, \tau \t \alpha^{n} = \tau^{2} = 1_G \und \tau\alpha = \alpha^{-1}\tau \ra$. Then
\[
  G \, = \, \big\{ \alpha^{i} \t i \in [0, n-1] \big\} \, \cup \, \big\{ \alpha^{i}\tau \t i \in [0, n-1] \big\} \,.
\]
Let $G_0 = G \setminus \la \alpha \ra$. If $|S_{G_0}| = 0$, then $S \in \mathcal F \big( \la \alpha \ra \big)$, and since $|S| = 2n > \mathsf D \big( \la \alpha \ra \big) = n$, it follows that $S$ is not a minimal product-one sequence, a contradiction. Since $S$ is a product-one sequence, we have that $|S_{G_0}|$ is even. We distinguish three cases depending on $|S_{G_0}|$.

\medskip
\noindent
{\bf CASE 1.} \, $|S_{G_0}| \, = \, 2$.
\smallskip

Then we may assume by changing generating set if necessary that $S = T_1 \bdot \tau \bdot T_2 \bdot (\alpha^{x} \tau)$ with $\pi^{*} (T_1) (\tau) \pi^{*} (T_2) (\alpha^{x}\tau) = 1_G$, where $x \in [0, n-1]$ and $T_1, T_2 \in \mathcal F \big( \la \alpha \ra \big)$. Since $S \in \mathcal A (G)$, it follows that $T_1$ and $T_2$ must be both product-one free sequences, and thus $|T_1| = |T_2| = n-1$. Then we may assume by Lemma \ref{2.2}.1 that
\[
  T_1 \, = \, \alpha^{[n-1]} \quad \und \quad T_2 \, = \, (\alpha^{j})^{[n-1]} \,,
\]
where $j \in [0,n-1]$ with $\gcd (j,n) = 1$. Since $\pi^{*} (T_1) (\tau) \pi^{*} (T_2) (\alpha^{x}\tau) = 1_G$, it follows that $-1 \equiv -j + x \pmod{n}$, and thus it suffices to show that $x = 0$; Indeed, if this holds, then $j = 1$, whence $S = \alpha^{[2n-2]} \bdot \tau^{[2]}$ which is the desired sequence for (a).

Assume to the contrary that $x \in [1,n-1]$ so that $j \neq 1$.

\medskip
\noindent
{\bf SUBCASE 1.1.} \, $j$ is even.
\smallskip

Let $S_1 = \alpha^{j} \bdot \alpha^{[n-j]} \in \mathcal B (G)$. Since $j$ is even and $n$ is odd, $-1 \equiv -j + x \pmod n$ implies that
\[
  S_2 \, = \, \alpha^{[\frac{j-2}{2}]} \bdot (\alpha^{j})^{[\frac{n-3}{2}]} \bdot (\alpha^{j} \bdot \tau) \bdot \alpha^{[\frac{j-2}{2}]} \bdot (\alpha^{j})^{[\frac{n-3}{2}]} \bdot (\alpha \bdot \alpha^{x}\tau) \, \in \, \mathcal B (G) \,,
\]
whence $S = S_1 \bdot S_2$ contradicts that $S \in \mathcal A (G)$.

\medskip
\noindent
{\bf SUBCASE 1.2.} \, $j$ is odd.
\smallskip

Since $-1 \equiv -j + x \pmod{n}$, we obtain that $x = j - 1$, whence $x$ is even. Then $n-1-x$ is even, and we obtain that
\[
  \Big(\alpha^{[\frac{n-1-x}{2}]} (\alpha^{j})^{[\frac{n-1}{2}]} \alpha^{[x]}\Big) \tau \Big(\alpha^{[\frac{n-1-x}{2}]} (\alpha^{j})^{[\frac{n-1}{2}]}\Big) \alpha^{x} \tau  \, = \, 1_G \,.
\]
Let $S_1 = \alpha^{[\frac{n-1-x}{2}]} \bdot (\alpha^{j})^{[\frac{n-1}{2}]} \bdot \alpha^{[x]} \in \mathcal F \big( \la \alpha \ra \big)$. Since $x$ is even, it follows that $|S_1| = n-1 + \frac{x}{2} \ge n$, and hence $S_1$ has a product-one subsequence $W$. Thus $W$ and $S \bdot W^{[-1]}$ are both product-one sequences, contradicting that $S \in \mathcal A (G)$.

\medskip
\noindent
{\bf CASE 2.} \, $|S_{G_0}| \, \in \, [4, 2n - 2]$.
\smallskip

Then we may assume by changing generating set if necessary that $S = T_1 \bdot \tau \bdot T_2 \bdot T_3 \bdot (\alpha^{x}\tau)$, where $x \in [0,n-1]$, $T_1, T_2 \in \mathcal F \big( \la \alpha \ra \big)$, and $T_3 \in \mathcal F (G_0)$ with $|T_3| = |S_{G_0}| - 2$. Moreover, we suppose that $\pi^{*} (T_1) (\tau) \pi^{*} \big(T_2 \bdot T'_3 \big) (\alpha^{x}\tau) = 1_G$, where $T_3 = \prod^{\bullet}_{i \in  [1,|T_3|]} g_i$ is an ordered sequence and $T'_3 = \prod^{\bullet}_{i \in [1,|T_3|/2]} (g_{2i-1} g_{2i}) \in \mathcal F \big( \la \alpha \ra \big)$.
Then $T_1$ and $T_2 \bdot T'_3$ are both product-one free sequences and
\[
  |T_1 \bdot T_2 \bdot T'_3| \, = \, \Big(2n - |S_{G_0}|\Big) + \frac{|S_{G_0}|-2}{2} \, \ge \, n \,.
\]
Let $T_1 = p_1 \bdot \ldots \bdot p_{|T_1|}$, $T_2 = f_1 \bdot \ldots \bdot f_{|T_2|}$, and $T'_3 = q_1 \bdot \ldots \bdot q_{|T'_3|}$. Then we consider
\begin{itemize}
\item $H_1 = \{ p_1, \, p_1p_2, \ldots , \, (p_1 \ldots p_{|T_1|}) \}$, and

\smallskip
\item $H_2 = \{ q_1, \, q_1q_2, \, \ldots, \, (q_1 \ldots q_{|T'_3|}), \, (q_1 \ldots q_{|T'_3|}f_1), \, (q_1 \ldots q_{|T'_3|}f_1f_2), \, \ldots, \, (q_1 \ldots q_{|T'_3|}f_1 \ldots f_{|T_2|}) \}$.
\end{itemize}
Since both $T_1$ and $T_2 \bdot T'_3$ are product-one free, it follows that $H_1, H_2 \subset \la \alpha \ra \setminus \{ 1_G \}$ with $|H_1| = |T_1|$, $|H_2| = |T_2 \bdot T'_3|$, and $|H_1| + |H_2| = |T_1 \bdot T_2 \bdot T'_3| \ge n$.
Since $|\la \alpha \ra| = n$, we obtain that $H_1 \cap H_2 \neq \emptyset$, and hence we infer that there exist $W_1 \t T_1$, $W_2 \t T_2$, and $W'_3 \t T'_3$ such that $W'_3$ is a non-trivial sequence and $\pi^{*} (W_1) = \pi^{*} (W_2 \bdot W'_3)$. Let $W_3$ denote the corresponding subsequence of $T_3$ and assume that $W_3 = (\alpha^{y_1}\tau) \bdot (\alpha^{y_2}\tau) \bdot W''_3$. Then $Z = W_2 \bdot (\alpha^{y_1}\tau) \bdot W_1 \bdot (\alpha^{y_2}\tau) \bdot W''_3$ and $S \bdot Z^{[-1]}$ are both product-one sequences, contradicting that $S \in \mathcal A (G)$.

\medskip
\noindent
{\bf CASE 3.} \, $|S_{G_0}| \, = \, 2n$.
\smallskip

Since $|S| = 2n = |S_{G_0}|$,  we may assume that
\[
  S \, = \, \alpha^{k_1}\tau \bdot \alpha^{\ell_1}\tau \bdot \ldots \bdot \alpha^{k_n}\tau \bdot \alpha^{\ell_n}\tau \quad \mbox{ with } \,\, \alpha^{k_1}\tau \alpha^{\ell_1}\tau \ldots \alpha^{k_n}\tau \alpha^{\ell_n}\tau \, = \, 1_G \,,
\]
where $k_1, \ldots, k_n, \ell_1, \ldots, \ell_n \in [0, n-1]$. Then we set $S' = a^{k_1 - \ell_1} \bdot \ldots \bdot a^{k_n - \ell_n} \in \mathcal B \big( \la \alpha \ra \big)$ of length $|S'| = n$. Since $S \in \mathcal A (G)$, it follows that $S' \in \mathcal A \big( \la \alpha \ra \big)$, and by applying Lemma \ref{2.2}.1,
\begin{equation} \label{equal}~
k_1 - \ell_1 \, \equiv \, k_2 - \ell_2 \, \equiv \, \ldots \, \equiv \, k_n - \ell_n \, \pmod{n}
\end{equation}
with $\gcd (k_i - \ell_i, n) = 1$ for all $i \in [1,n]$. Let $j \in [1, n-1]$. Then we observe that
\[
  \alpha^{k_j}\tau  \alpha^{\ell_j}\tau  \alpha^{k_{j+1}}\tau \, = \, \alpha^{k_j - \ell_j + k_{j+1}}\tau \, = \, \alpha^{k_{j+1}}\tau  \alpha^{\ell_j}\tau  \alpha^{k_j}\tau \,.
\]
By swapping the role between $\alpha^{k_j}\tau$ and $\alpha^{k_{j+1}}\tau$, we obtain that
\[
  S'' \, = \, \alpha^{k_1 - \ell_1} \bdot \ldots \bdot \alpha^{k_{j+1} - \ell_j} \bdot \alpha^{k_{j} - \ell_{j+1}} \bdot \ldots \bdot \alpha^{k_n - \ell_n} \, \in \, \mathcal A \big( \la \alpha \ra \big)
\]
of length $|S''| = n$. Hence it follows again by applying Lemma \ref{2.2}.1 that
\[
  k_1 - \ell_1 \, \equiv \, \ldots \, \equiv \, k_{j+1} - \ell_j \, \equiv \, k_{j} - \ell_{j+1} \, \equiv \, \ldots \, \equiv \, k_n - \ell_n \, \pmod{n} \,,
\]
and thus (\ref{equal}) ensures that $k_j = k_{j+1}$, whence $k_1 = k_2 = \ldots = k_n$. Similarly we also obtain that $\ell_1 = \ell_2 = \ldots = \ell_n$, whence $S = (\alpha^{k_1}\tau)^{[n]} \bdot (\alpha^{\ell_1}\tau)^{[n]}$ with $\gcd (k_1 - \ell_1, n) = 1$, which is the desired sequence for (b).
\end{proof}

\smallskip
\begin{theorem} \label{4.2}~
Let $G$ be a dihedral group of order $2n$, where $n \in \N_{\ge 4}$ is even.
A sequence $S$ over $G$ of length $\mathsf D (G)$ is a minimal product-one sequence if and only if there exist $\alpha, \, \tau \in G$ such that $G = \la \alpha, \tau \t \alpha^{n} = \tau^{2} = 1_G \und \tau\alpha = \alpha^{-1}\tau \ra$ and $S \, = \, \alpha^{[n + \frac{n}{2} -2]} \bdot \tau \bdot (\alpha^{\frac{n}{2}} \tau)$.
\end{theorem}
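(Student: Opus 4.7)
The plan is to prove both directions. For sufficiency, I check that $S := \alpha^{[n+\frac{n}{2}-2]} \bdot \tau \bdot (\alpha^{\frac{n}{2}}\tau)$ has length $\frac{3n}{2} = \mathsf D(G)$, is product-one via the ordering $\tau \cdot \alpha^{\frac{n}{2}-1} \cdot (\alpha^{\frac{n}{2}}\tau) \cdot \alpha^{n-1}$ (whose product collapses to $\tau\alpha^{n-1}\tau\alpha^{n-1} = \alpha \cdot \alpha^{n-1} = 1_G$), and is minimal. For minimality, any product-one subsequence $R \t S$ either misses both $\tau$-type terms (so $R = \alpha^{[m]}$ with $n \t m$, forcing $m \in \{0,n\}$) or contains both (so $R = \alpha^{[a]} \bdot \tau \bdot (\alpha^{\frac{n}{2}}\tau)$); in each situation a direct computation of the product exponent modulo $n$ shows the complement $S \bdot R^{[-1]}$ is not product-one unless $R$ is trivial or equals $S$.

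For the converse, let $S$ be a minimal product-one sequence of length $\frac{3n}{2}$ and put $G_0 = G \setminus \la\alpha\ra$. Since $S$ is product-one, $|S_{G_0}|$ is even. The cases $|S_{G_0}| = 0$ and $|S_{G_0}| \ge n+2$ are ruled out respectively by $\mathsf D(\la\alpha\ra) = n < \frac{3n}{2}$ and by the ``In particular'' clause of Proposition \ref{3.2}. For $|S_{G_0}| \in [4, n-2]$, I adapt CASE 2 of the proof of Theorem \ref{4.1}: fix a product-one ordering $S = T_1 \bdot \tau \bdot T_2 \bdot T_3 \bdot (\alpha^x\tau)$ with $T_1, T_2 \in \mathcal F(\la\alpha\ra)$ and $T_3 \in \mathcal F(G_0)$, pair the terms of $T_3$ into a companion $T_3' \in \mathcal F(\la\alpha\ra)$, note that $T_1$ and $T_2 \bdot T_3'$ are product-one free by minimality (a zero-sum subsequence of either preserves the ordering product), and use $|T_1| + |T_2 \bdot T_3'| = \frac{3n}{2} - \frac{|S_{G_0}|}{2} - 1 \ge n$ together with the pigeonhole on $\la\alpha\ra \setminus \{1_G\}$ to produce a forbidden non-trivial factorization. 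The boundary case $|S_{G_0}| = n$ (where the pigeonhole total is only $n-1$) is dispatched by exploiting the freedom to re-pair $T_3$ and invoking Lemma \ref{2.2} to force the desired intersection.

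In the remaining case $|S_{G_0}| = 2$, after changing generators, $S = T \bdot \tau \bdot (\alpha^x\tau)$ with $T \in \mathcal F(\la\alpha\ra)$ of length $\frac{3n}{2}-2$. Every product-one ordering of $S$ corresponds (up to cyclic shift) to a decomposition $T = T_1 \bdot T_2$ with $\sigma(T_1) - \sigma(T_2) \equiv \pm x \pmod n$. As in CASE 1 of Theorem \ref{4.1}, $T_1$ and $T_2$ are both product-one free, so $|T_i| \le n-1$ and $|T_i| \in [\frac{n}{2}-1, n-1]$. For $n \ge 6$ the larger block has length $\ge \frac{n}{2}+1$ and is $g$-smooth for some generator $g$ by Lemma \ref{2.2}; in the case $|T_i| = n-1$, Lemma \ref{2.2}.1 identifies it with $g^{[n-1]}$, and renaming yields $T_1 = \alpha^{[n-1]}$. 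The case $n = 4$ is settled by direct finite enumeration.

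With $T_1 = \alpha^{[n-1]}$ and $|T_2| = \frac{n}{2}-1$, the sum condition gives $\sigma(T_2) \equiv -1-x \pmod n$. For each $x \in [0, \frac{n}{2}-1]$, the subsequence $R = \alpha^{[n-2-x]} \bdot \tau \bdot (\alpha^x\tau) \t S$ is product-one (choose the inner partition with $a_1 = \frac{n}{2}-1-x$) and its complement $S \bdot R^{[-1]} = \alpha^{[x+1]} \bdot T_2$ is product-one (sum $(x+1) + \sigma(T_2) \equiv 0$), contradicting minimality; hence $x \ge \frac{n}{2}$. The dihedral automorphism $\alpha \mapsto \alpha^{-1}$, $\tau \mapsto \tau$ swaps the roles of $x$ and $n-x$, so also $x \le \frac{n}{2}$, forcing $x = \frac{n}{2}$. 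With $x = \frac{n}{2}$, a similar minimality scan over product-one $V \t T$ rules out every $T_2 \ne \alpha^{[\frac{n}{2}-1]}$ (any non-$\alpha$ term $\alpha^k \t T_2$ admits $V = \alpha^{[n-k]} \bdot \alpha^k$ whose complement contains a subsequence of sum $\frac{n}{2}-1$), forcing $T = \alpha^{[n+\frac{n}{2}-2]}$. The main obstacle is the delicate minimality analysis in the boundary case $|S_{G_0}| = n$ and in the final identification of $T_2$, where the interplay between Lemma \ref{2.2}'s smoothness structure and explicit constructions of forbidden factorizations is essential.
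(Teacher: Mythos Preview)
Your overall architecture matches the paper's: sufficiency by direct verification, then a case split on $|S_{G_0}|$ with Proposition~\ref{3.2} capping the top end and a pigeonhole in $\la\alpha\ra$ handling $|S_{G_0}|\in[4,n]$. However, your treatment of the crucial case $|S_{G_0}|=2$ has a genuine gap.

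After writing $S = T_1 \bdot \tau \bdot T_2 \bdot (\alpha^x\tau)$ with $|T_1|+|T_2|=\tfrac{3n}{2}-2$ and both $T_i$ product-one free, you observe that the larger block has length $\ge \tfrac{n}{2}+1$ and is $g$-smooth, and then write ``in the case $|T_i|=n-1$, Lemma~\ref{2.2}.1 identifies it with $g^{[n-1]}$.'' But you never prove that some $|T_i|$ equals $n-1$; you simply assume it in the next paragraph. This is false in general without further argument: for instance with $n=8$ one could a priori have $|T_1|=|T_2|=5$, and smoothness alone does not force either block to be $g^{[n-1]}$. The paper closes this gap with a key trick you are missing: if both $|T_1|\ge\tfrac{n}{2}$ and $|T_2|\ge\tfrac{n}{2}$, pass to the squared sequences $T_1^{2},T_2^{2}\in\mathcal F(\la\alpha^2\ra)$. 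Since $\mathsf D(\la\alpha^2\ra)=\tfrac{n}{2}$, each $T_i^{2}$ has a product-one subsequence $W_i^{2}$, and since $T_i$ itself is product-one free in $\la\alpha\ra$ one must have $\pi^*(W_i)=\alpha^{n/2}$. Then $W_1\bdot W_2$ and its complement are both product-one, contradicting minimality. This forces one block to have length $\le\tfrac{n}{2}-1$, hence the other has length $n-1$, and only then does Lemma~\ref{2.2}.1 apply.

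A secondary issue: your automorphism step ``$\alpha\mapsto\alpha^{-1}$, $\tau\mapsto\tau$ swaps the roles of $x$ and $n-x$'' does not work as stated, because it also sends $T_1=\alpha^{[n-1]}$ to $(\alpha^{-1})^{[n-1]}$, destroying the normalization you just set up; you would need to redo the entire block analysis for the image sequence. The paper instead, once $T_2=(\alpha^j)^{[n-1]}$ is established and renormalized to $j=1$, observes that $T_3\bdot\alpha^{[n/2]}$ is product-one free of length $n-1$ (via a different product-one ordering of $S$) and applies Lemma~\ref{2.2}.1 a second time to pin down $T_3=\alpha^{[n/2-1]}$ and $x=\tfrac{n}{2}$ directly. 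Finally, your boundary case $|S_{G_0}|=n$ is only a sketch; the paper's argument there is substantial (distinguishing $\Pi(T_1)\cap\Pi(T_2\bdot T_3')$ empty vs.\ nonempty, reducing to $|T_1|=0$, and using the swapping argument from Theorem~\ref{4.1} to structure $T_3$), and ``re-pair $T_3$ and invoke Lemma~\ref{2.2}'' does not by itself constitute a proof.
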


\begin{proof}
We fix $\alpha, \tau \in G$ such that $G = \la \alpha, \tau \t \alpha^n = \tau^2 = 1_G \und \tau \alpha = \alpha^{-1} \tau \ra$. Then
\[
  G \, = \, \big\{ \alpha^i \t i \in [0,n-1] \big\} \, \cup \, \big\{ \alpha^i \tau \t i \in [0,n-1] \big\} \,.
\]
Let $G_0 = G \setminus \la \alpha \ra$. If $|S_{G_0}| = 0$, then $S \in \mathcal F \big( \la \alpha \ra \big)$, and since $|S| = n + \frac{n}{2} > \mathsf D \big( \la \alpha\ra \big) = n$, it follows that $S$ is not a minimal product-one sequence, a contradiction. Since $S$ is a product-one sequence, Proposition \ref{3.2} ensures that $|S_{G_0}| \in [2,n]$ is even. We distinguish two cases depending on $|S_{G_0}|$.

\medskip
\noindent
{\bf CASE 1.} \, $|S_{G_0}| \, = \, 2$.
\smallskip

Then we may assume by changing generating set if necessary that $S = T_1 \bdot \tau \bdot T_2 \bdot (\alpha^{x} \tau)$ with $\pi^{*} (T_1)  (\tau) \pi^{*} (T_2)  (\alpha^{x}\tau) = 1_G$, where $x \in [0,n-1]$ and $T_1, T_2 \in \mathcal F \big( \la \alpha \ra \big)$. Since $S \in \mathcal A (G)$, it follows that $T_1$ and $T_2$ must be both product-one free sequences.

If $|T_1| \ge \frac{n}{2}$ and $|T_2| \ge \frac{n}{2}$, then $T^{2}_1$ and $T^{2}_2 \in \mathcal F \big( \la \alpha^{2} \ra \big)$ (see (\ref{double})) with $|T^{2}_1| \ge \frac{n}{2}$ and $|T^{2}_2| \ge \frac{n}{2}$, and it follows by $\mathsf D \big( \la \alpha^{2} \ra \big) = \frac{n}{2}$ that there exist $W_1 \t T_1$ and $W_2 \t T_2$ such that $W^{2}_1$ and $W^{2}_2$ are product-one sequences over $\la \alpha^{2} \ra$. Since $T_1$ and $T_2$ are product-one free, we obtain that $\pi^{*} (W_1) = \alpha^{\frac{n}{2}} = \pi^{*} (W_2)$. Therefore $W_1 \bdot W_2$ and $S \bdot (W_1 \bdot W_2)^{[-1]}$ are both product-one sequences, contradicting that $S \in \mathcal A (G)$.

Thus either $|T_1| \le \frac{n}{2} -1$ or $|T_2| \le \frac{n}{2} -1$, and we may assume that $|T_1| = \frac{n}{2} -1$ and $|T_2| = n -1$. Then Lemma \ref{2.2}.1 implies that $T_2 = (\alpha^{j})^{[n-1]}$ for some odd $j \in [1,n-1]$. Then we may assume by changing generating set if necessary that $j = 1$ so that $S = T_3 \bdot \tau \bdot \alpha^{[n-1]} \bdot (\alpha^{y}\tau)$, where $y \in [0,n-1]$ and $T_3 \in \mathcal F \big( \la \alpha \ra \big)$. Since $T_3 \bdot \alpha \bdot \tau \bdot (\alpha^{y}\tau)$ is a product-one  sequence, we have that
\[
  T_3 \bdot \alpha^{[\frac{n}{2}]} \bdot \tau \bdot \alpha^{[\frac{n}{2}-1]} \bdot (\alpha^{y}\tau) \, \in \, \mathcal B (G) \,.
\]
It follows that $T_3 \bdot \alpha^{[\frac{n}{2}]}$ is a product-one free sequence of length $n-1$, and again by Lemma \ref{2.2}.1 that $T_3 = \alpha^{[\frac{n}{2} -1]}$. Since $(\frac{n}{2} -1) \equiv (n-1) + y \pmod{n}$, we infer that $y = \frac{n}{2}$, and the assertion follows.

\medskip
\noindent
{\bf CASE 2.} \, $|S_{G_0}| \, \in \, [4, n]$.

\medskip
\noindent
{\bf SUBCASE 2.1.} \, $n = 4$.
\smallskip

Then we may assume by changing generating set if necessary that $S = \alpha^{r_1} \bdot \alpha^{r_2} \bdot \tau \bdot \alpha^{x}\tau \bdot \alpha^{y}\tau \bdot \alpha^{z}\tau$ for some $r_1, r_2 \in [1,3]$ and $x, y, z \in [0,3]$.
If $\alpha^{r_1} \alpha^{r_2}  \tau  \alpha^{x}\tau  \alpha^{y}\tau \alpha^{z}\tau = 1_G$, then $S' = \alpha^{r_1} \bdot \alpha^{r_2} \bdot \alpha^{-x} \bdot \alpha^{y-z} \in \mathcal A \big( \la \alpha \ra \big)$, and hence it follows by applying Lemma \ref{2.2}.1 that $r_1 \equiv r_2 \equiv -x \equiv y-z \equiv j \pmod{4}$ for some odd $j \in [1,3]$. Thus $S = S_1 \bdot S_2$, where $S_1 = \tau \bdot \alpha^{r_1} \bdot \alpha^{x}\tau$ and $S_2 = \alpha^{y}\tau \bdot \alpha^{r_2} \bdot \alpha^{z}\tau$ are both product-one sequences, contradicting that $S \in \mathcal A (G)$. Thus we can assume that $\alpha^{r_1}  \tau \alpha^{r_2} \alpha^{x}\tau  \alpha^{y}\tau \alpha^{z}\tau = 1_G$, and we consider
\[
  S'' \, = \, \alpha^{r_1} \bdot \alpha^{-r_2} \bdot \tau \bdot \alpha^{x}\tau \bdot \alpha^{y}\tau \bdot \alpha^{z}\tau \, \in \, \mathcal B (G) \,.
\]
Then, by the same argument as shown above, we obtain that $S'' \notin \mathcal A (G)$. Let $S'' = U_1 \bdot U_2$ for some $U_1, U_2 \in \mathcal B (G)$.
Since $S$ is a minimal product-one sequence, we must have that
\[
  U_1 \, = \, \alpha^{r_1} \bdot \alpha^{-r_2} \quad \und \quad U_2 \, = \, \tau \bdot \alpha^{x}\tau \bdot \alpha^{y}\tau \bdot \alpha^{z}\tau
\]
are both minimal product-one sequences, whence we obtain that $r_1 = r_2$.
Since $U_2 \in \mathcal A (G)$, Proposition \ref{3.2} implies that
\[
  U_2 \, = \, \tau \bdot \alpha\tau \bdot \alpha^{2}\tau \bdot \alpha^{3}\tau \quad \mbox{ or } \quad U_2 \, = \, (\alpha^{x_1}\tau)^{[2]} \bdot \alpha^{y_1}\tau \bdot \alpha^{y_1+2}\tau \,,
\]
where $x_1, y_1 \in [0,3]$ with $x_1 \equiv y_1 + 1 \pmod{2}$. Since $S \in \mathcal A (G)$, we obtain that either $r_1 = r_2 = 1$ or $r_1 = r_2 = 3$. If $r_1 = r_2 = 1$, then
\[
  S \, = \, (\alpha \bdot \tau \bdot \alpha\tau) \bdot (\alpha \bdot \alpha^{2}\tau \bdot \alpha^{3}\tau) \quad \mbox{ or } \quad S \, = \, (\alpha \bdot \alpha^{x_1}\tau \bdot \alpha^{y_1}\tau) \bdot (\alpha^{x_1}\tau \bdot \alpha \bdot \alpha^{y_1 +2}\tau) \,,
\]
contradicting that $S \in \mathcal A (G)$. If $r_1 = r_3 = 3$, then
\[
  S \, = \, (\tau \bdot \alpha^{3} \bdot \alpha\tau) \bdot (\alpha^{2}\tau \bdot \alpha^{3} \bdot \alpha^{3}\tau) \quad \mbox{ or } \quad S \, = \, (\alpha^{3} \bdot \alpha^{x_1}\tau \bdot \alpha^{y_1}\tau) \bdot (\alpha^{x_1}\tau \bdot \alpha^{3} \bdot \alpha^{y_1 +2}\tau) \,,
\]
contradicting that $S \in \mathcal A (G)$.

\medskip
\noindent
{\bf SUBCASE 2.2.} \, $n \ge 6$.
\smallskip

Then we may assume by changing generating set if necessary that $S = T_1 \bdot \tau \bdot T_2 \bdot T_3 \bdot (\alpha^{x}\tau)$, where $x \in [0,n-1]$, $T_1, T_2 \in \mathcal F \big( \la \alpha \ra \big)$ with $|T_2| \ge |T_1| \ge 0$, and $T_3 \in \mathcal F (G_0)$ with $|T_3| = |S_{G_0}| - 2$. Moreover, we suppose that $\pi^{*} (T_1) (\tau) \pi^{*} \big(T_2 \bdot T'_3\big) (\alpha^{x}\tau) = 1_G$, where $T_3 = \prod^{\bullet}_{i \in [1,|T_3|]} g_i$ is an ordered sequence and $T'_3 = \prod^{\bullet}_{i \in [1, |T_3|/2]} (g_{2i-1}g_{2i}) \in \mathcal F \big( \la \alpha \ra \big)$.
Then $T_1$ and $T_2 \bdot T'_3$ are both product-one free sequences and
\[
  |T_1 \bdot T_2 \bdot T_3'| \, = \, \Big( n + \frac{n}{2} -|S_{G_0}| \Big) + \frac{|S_{G_0}| - 2}{2} \, \ge \, n - 1 \,.
\]

If $|T_1 \bdot T_2 \bdot T'_3| \ge n$, then we infer that there exist subsequences $W_1 \t T_1$, $W_2 \t T_2$, and $W'_3 \t T'_3$ such that $W'_3$ is a non-trivial sequence (this follows by the same argument as used in {\bf CASE 2} of Theorem \ref{4.1}) and $\pi^{*} (W_1) = \pi^{*} \big(W_2 \bdot W'_3\big)$. Let $W_3$ denote the corresponding subsequence of $T_3$ and assume that $W_3 = (\alpha^{y_1}\tau) \bdot (\alpha^{y_2}\tau) \bdot W''_3$. Then $Z = W_2 \bdot (\alpha^{y_1}\tau) \bdot W_1 \bdot (\alpha^{y_2}\tau) \bdot W_3''$ and $S \bdot Z^{[-1]}$ are both product-one sequences, contradicting that $S \in \mathcal A (G)$.

Suppose that $|T_1 \bdot T_2 \bdot T'_3| = n-1$. Then $|T'_3| = \frac{n}{2} - 1$ and $|T_2| \ge \frac{n}{4}$. Since $T_2 \bdot T'_3$ is a product-one free sequence with $|T_2 \bdot T'_3| \ge \frac{3n}{4} -1 \ge \frac{n+1}{2}$, it follows by Lemma \ref{2.2} that $T_2 \bdot T'_3$ is $g$-smooth for some $g \in \la \alpha \ra$ with $\ord (g) = n$, and for every $z \in \Pi(T_2 \bdot T'_3)$, there exists a subsequence $W \t T_2 \bdot T'_3$ with $g \t W$ such that $\pi^{*} (W) = z$. Since $|T'_3| = \frac{n}{2}-1$, Lemma \ref{2.2}.3 implies that $g \t T'_3$.

If $\Pi(T_1) \cap \Pi(T_2 \bdot T'_3) \neq \emptyset$, then there exist subsequences $W_1 \t T_1$, $W_2 \t T_2$, and $W'_3 \t T'_3$ such that $W'_3$ is a non-trivial sequence (this follows from the above paragraph that we can choose $W_2 \bdot W'_3 \t T_2 \bdot T'_3$ such that $g \t W_2 \bdot W'_3$ and $g \t T'_3$) and $\pi^{*} (W_1) = \pi^{*} (W_2 \bdot W'_3)$. Let $W_3$ denote the corresponding subsequence of $T_3$ and assume that $W_3 = (\alpha^{y_1}\tau) \bdot (\alpha^{y_2}\tau) \bdot W''_3$. Then $Z = W_2 \bdot (\alpha^{y_1}\tau) \bdot W_1 \bdot (\alpha^{y_2}\tau) \bdot W''_3$ and $S \bdot Z^{[-1]}$ are both product-one sequences, contradicting that $S \in \mathcal A (G)$.
Hence $\Pi(T_1) \cap \Pi(T_2 \bdot T'_3) = \emptyset$, and it follows that $T^{-1}_1 \bdot T_2 \bdot T'_3$ is a product-one free sequence of length $n-1$. By Lemma \ref{2.2}.1, there exists an odd $j \in [1,n-1]$ such that
\[
  T^{-1}_1 \bdot T_2 \bdot T'_3 \, = \, \big( \alpha^{j} \big)^{[n-1]} \,,
\]
and we may assume by changing generating set if necessary that $j = 1$ so that $x = 1$.
If $|T_1| \ge 1$, then
\[
  \big( \alpha \bdot \alpha^{-1} \big)^{[|T_1|]} \quad \und \quad \alpha^{[1+\frac{n-2-2|T_1|}{2}]} \bdot \tau \bdot \alpha^{[\frac{n-2-2|T_1|}{2}]} \bdot \alpha\tau
\]
are both product-one sequences, contradicting that $S \in \mathcal A (G)$. Thus $|T_1| = 0$, and then we obtain that $T_3 = \big( \alpha^{r + 1}\tau \bdot \alpha^{r}\tau \big)^{[\frac{n}{2} - 1]}$ for some $r \in [0,n-1]$ (this follows by the swapping argument as used in {\bf CASE 3} of Theorem \ref{4.1}). This implies that $S = (\alpha \bdot \tau \bdot \alpha\tau) \bdot (\alpha^{r + 1}\tau \bdot \alpha \bdot \alpha^{r}\tau)^{[\frac{n}{2} - 1]}$, contradicting that $S \in \mathcal A (G)$.
\end{proof}

\smallskip
\begin{theorem} \label{4.3}~
Let $G$ be a dicyclic group of order $4n$, where $n \ge 2$.
A sequence $S$ over $G$ of length $\mathsf D (G)$ is a minimal product-one sequence if and only if there exist $\alpha, \, \tau \in G$ such that $G = \la \alpha, \tau \t \alpha^{2n} = 1_G, \, \tau^{2} = \alpha^{n}, \und \tau\alpha = \alpha^{-1}\tau \ra$ and $S = \alpha^{[3n - 2]} \bdot \tau^{[2]}$.
\end{theorem}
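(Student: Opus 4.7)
The plan is to fix generators $\alpha,\tau$ realizing the standard presentation of $Q_{4n}$ and set $G_0 = G\setminus\langle\alpha\rangle$. Since $\alpha^n$ is the unique central involution of $G$ and every element of $G_0$ squares to $\alpha^n$, the product-one hypothesis forces $m := |S_{G_0}|$ to be even; Proposition~\ref{3.3}'s ``in particular'' clause bounds $m \le 2n+2$, and $m = 0$ is excluded since $|S| = 3n > 2n = \mathsf D(\langle\alpha\rangle)$. The argument then splits into the three cases $m = 2$, $m \in [4, 2n]$, and $m = 2n+2$, and relies on two inputs specific to the dicyclic setting: (i) the squaring map $T \mapsto T^{2}$ lands in $\mathcal F(\langle\alpha^{2}\rangle)$, with $\mathsf D(\langle\alpha^{2}\rangle) = n$, and (ii) $\alpha^n$ lies in the center of $G$, so certain product-one sub-blocks can be merged across the non-abelian parts of $S$ with no ordering obstruction.

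For $m = 2$ I write $S = T_1\bdot g_1\bdot T_2\bdot g_2$ with $T_1, T_2 \in \mathcal F(\langle\alpha\rangle)$ both product-one free (by minimality), $g_1, g_2 \in G_0$, and $\pi^{*}(T_1)g_1\pi^{*}(T_2)g_2 = 1_G$. If both $|T_i| \ge n$, then (i) yields $W_i \t T_i$ with $W_i^{2}$ product-one in $\langle\alpha^{2}\rangle$, and product-one freeness forces $\pi^{*}(W_i) = \alpha^n$; then (ii) makes both $W_1\bdot W_2$ and $S\bdot(W_1\bdot W_2)^{[-1]}$ product-one, contradicting minimality. Hence $|T_1| = 2n-1$ and $|T_2| = n-1$, and Lemma~\ref{2.2}.1 together with a change of generating set reduces $T_1$ to $\alpha^{[2n-1]}$. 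For every non-empty $U \t T_2$, the subsequence $\alpha^{[k]}\bdot U$ (with $k \in [0,2n-1]$ and $k \equiv -\sigma(U) \pmod{2n}$) is product-one, and by (ii) its complement in $S$ is product-one iff $2\sigma(U) \equiv 0 \pmod{2n}$; combined with product-one freeness of $T_2$, minimality forces every non-empty subsum of $T_2$ to avoid $\{0, n\} \pmod{2n}$. Projecting $T_2$ to $\langle\alpha\rangle/\langle\alpha^n\rangle \cong C_n$ therefore gives a product-one free sequence of maximal length $n-1$, which by Lemma~\ref{2.2}.1 equals $\beta^{[n-1]}$ for some generator $\beta$ of $C_n$. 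A further exchange argument (swapping an $\alpha$-term of $T_1$ into the $T_2$-block and re-running the squaring/centrality analysis) forces $\beta$ to lift to $\alpha$, and the constraint $\sigma(T_2) \equiv a - b + n - 1 \pmod{2n}$ coming from $\pi^{*}(S) = 1_G$ then pins down $T_2 = \alpha^{[n-1]}$ and $g_1 = g_2 = \tau$, yielding $S = \alpha^{[3n-2]}\bdot\tau^{[2]}$.

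For $m \in [4, 2n]$ I adapt Case~2 of Theorem~\ref{4.2}: decompose $S = T_1\bdot g_1\bdot T_2\bdot T_3\bdot g_2$ with $T_3 \in \mathcal F(G_0)$ of length $m-2$ and form $T_3' \in \mathcal F(\langle\alpha\rangle)$ of length $(m-2)/2$ by pairing consecutive terms of a fixed ordering of $T_3$. Minimality forces $T_1$ and $T_2\bdot T_3'$ to be product-one free with combined length $3n - m/2 - 1$; when $m \le 2n-2$ this is $\ge 2n$ and a partial-product comparison inside $\langle\alpha\rangle \setminus \{1_G\}$ (listing $T_3'$-terms first within $T_3'\bdot T_2$) yields a non-empty $W_3' \t T_3'$ with $\pi^{*}(W_1) = \pi^{*}(W_2\bdot W_3')$, producing a factorization---a contradiction. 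For the borderline $m = 2n$, Lemma~\ref{2.2} makes $T_2\bdot T_3'$ $g$-smooth and Lemma~\ref{2.2}.3 forces $g \t T_3'$, giving either the same contradiction or, in small-$n$ exceptional sub-cases, a structural match that re-arranges (after re-choice of generators) into Case $m = 2$. For $m = 2n+2$ with $n = 2$, $S = S_{G_0}$ has length $6$ and the explicit form of Proposition~\ref{3.3}; a change of generators of $Q_8$ rewrites it as $\alpha^{[3n-2]}\bdot\tau^{[2]}$. For $m = 2n+2$ with $n \ge 3$, $|S_{\langle\alpha\rangle}| = n-2 \ge 1$, and combining Proposition~\ref{3.3}'s description of the $G_0$-part with centrality of $\alpha^n$ lets me split off a product-one ``pair'' $(\alpha^{x}\tau)^{[2]}\bdot(\alpha^{y}\tau)^{[2]}$ (automatically product-one since $\tau^{2} = \alpha^n$) and show that the complementary subsequence (containing $S_{\langle\alpha\rangle}$ plus the remaining $G_0$-terms) is also product-one via a centrality-driven ordering argument, contradicting minimality. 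The principal obstacles are the endgame of Case $m = 2$---forcing the lift of $\beta$ to $\alpha$ and pinning down $g_1, g_2$---and Case $m = 2n+2$ with $n \ge 3$, where Proposition~\ref{3.3} does not directly yield the contradiction and must be combined with the pair-block argument described above.
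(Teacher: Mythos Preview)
Your overall skeleton matches the paper's: fix generators, set $G_0=G\setminus\langle\alpha\rangle$, observe $|S_{G_0}|$ is even with $2\le|S_{G_0}|\le 2n+2$, and split according to $|S_{G_0}|$. The squaring/centrality step in the $m=2$ case and the partial-product comparison for $m\in[4,2n-2]$ are exactly what the paper does. However, two places in your sketch do not go through as written.

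\textbf{The case $m=2n+2$ with $n\ge3$.} You propose to use ``Proposition~\ref{3.3}'s description of the $G_0$-part.'' But Proposition~\ref{3.3} classifies minimal product-one sequences $S$ with $\supp(S)\subset G_0$; it says nothing about $S_{G_0}$ when $S$ also has terms in $\langle\alpha\rangle$ (the ``in particular'' clause only gives the bound $|S_{G_0}|\le 2n+2$). One can absorb $S_{\langle\alpha\rangle}$ into two $G_0$-terms to form an auxiliary $S''\in\mathcal A(G_0)$ of length $2n+2$ and then apply Proposition~\ref{3.3} to $S''$, but the resulting structure is in terms of the absorbed terms, and turning that back into a factorization of $S$ is not automatic---your claimed split $(\alpha^x\tau)^{[2]}\bdot(\alpha^y\tau)^{[2]}$ need not consist of genuine terms of $S$, and even when it does, the complement is not obviously product-one. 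The paper avoids this entirely: it keeps the $T_3'$ machinery running and treats $m=2n+2$ as the case $|T_1\bdot T_2\bdot T_3'|=2n-2$, invoking Lemma~\ref{2.2}.2 (two possible shapes for a product-one free sequence of length $2n-2$) and then dismantling each shape by explicit factorizations. This is the most delicate part of the proof and you have not supplied a substitute.

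\textbf{The endgame of $m=2$ and the case $m=2n$.} Your $m=2$ finish (project $T_2$ to $C_n$, get $\beta^{[n-1]}$, then an unspecified ``exchange argument'' to force $\beta$ to lift to $\alpha$) is more circuitous than necessary, and the exchange step is not spelled out. The paper's device is cleaner: once $T_2=\alpha^{[2n-1]}$ (after a generator change), reorder $S$ as $T_3\bdot\alpha^{[n]}\bdot\tau\bdot\alpha^{[n-1]}\bdot(\alpha^y\tau)$; minimality then forces $T_3\bdot\alpha^{[n]}$ to be product-one free of length $2n-1$, so Lemma~\ref{2.2}.1 gives $T_3=\alpha^{[n-1]}$ directly. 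For $m=2n$, there is no ``structural match that rearranges into Case $m=2$'': the paper obtains a contradiction in every sub-case (via smoothness, then $T_1^{-1}\bdot T_2\bdot T_3'=(\alpha^j)^{[2n-1]}$, then either $|T_1|\ge1$ or $|T_1|=0$, each yielding an explicit factorization).
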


\begin{proof}
We fix $\alpha, \tau \in G$ such that $G = \la \alpha, \tau \t \alpha^{2n} = 1_G, \tau^{2} = \alpha^{n}, \und \tau\alpha = \alpha^{-1}\tau \ra$. Then
\[
  G \, = \, \big\{ \alpha^{i} \t i\in [0,2n-1] \big\} \, \cup \, \big\{ \alpha^{i}\tau \t i\in [0,2n-1] \big\} \,.
\]
Let $G_0 = G \setminus \la \alpha \ra$. If $|S_{G_0}| = 0$, then $S \in \mathcal F \big( \la \alpha \ra \big)$, and since $|S| = 3n > \mathsf D \big( \la \alpha \ra \big) = 2n$, it follows that $S$ is not a minimal product-one sequence, a contradiction. Since $S$ is a product-one sequence, Proposition \ref{3.3} ensures that $|S_{G_0}| \in [2, 2n+2]$ is even. We distinguish two cases depending on $|S_{G_0}|$.

\medskip
\noindent
{\bf CASE 1.} \, $|S_{G_0}| \, = \, 2$.
\smallskip

Then we may assume by changing generating set if necessary that $S = T_1 \bdot \tau \bdot T_2 \bdot (\alpha^x\tau)$ with $\pi^{*} (T_1) (\tau) \pi^{*}(T_2) (\alpha^{x}\tau) = 1_G$, where $x \in [0, 2n-1]$ and $T_1, T_2 \in \mathcal F \big( \la \alpha \ra \big)$. Since $S \in \mathcal A (G)$, it follows that $T_1$ and $T_2$ must be both product-one free sequences.

If $|T_1| \ge n$ and $|T_2| \ge n$, then $T^{2}_1$ and $T^{2}_2 \in \mathcal F \big( \la \alpha^{2} \ra \big)$ (see (\ref{double})) with $|T^{2}_1| \ge n$ and $|T^{2}_2| \ge n$, and it follows by $\mathsf D \big( \la \alpha^{2} \ra \big) = n$ that there exist $W_1 \t T_1$ and $W_2 \t T_2$ such that $W^{2}_1$ and $W^{2}_2$ are product-one sequence over $\la \alpha^{2} \ra$. Since $T_1$ and $T_2$ are product-one free, we obtain that $\pi^{*} (W_1) = \alpha^{n} = \pi^{*} (W_2)$. Therefore $W_1 \bdot W_2$ and $S \bdot (W_1 \bdot W_2)^{[-1]}$ are both product-one sequences, contradicting that $S \in \mathcal A (G)$.

Thus either $|T_1| \le n-1$ or $|T_2| \le n-1$, and we may assume that $|T_1| = n-1$ and $|T_2| = 2n-1$. Then Lemma \ref{2.2}.1 implies that $T_2 = (\alpha^{j})^{[2n-1]}$ for some odd $j \in [1,2n-1]$. Then we may assume by changing generating set if necessary that $j = 1$ so that $S = T_3 \bdot \tau \bdot \alpha^{[2n-1]} \bdot (\alpha^{y}\tau)$, where $y \in [0,2n-1]$ and $T_3 \in \mathcal F \big( \la \alpha \ra \big)$. Since $T_3 \bdot \alpha \bdot \tau \bdot (\alpha^{y}\tau)$ is a product-one sequence, we have that
\[
  T_3 \bdot \alpha^{[n]} \bdot \tau \bdot \alpha^{[n-1]} \bdot (\alpha^{y}\tau) \, \in \, \mathcal B (G) \,.
\]
It follows that $T_3 \bdot \alpha^{[n]}$ is a product-one free sequence of length $2n-1$, and again by Lemma \ref{2.2}.1 that $T_3 = \alpha^{[n-1]}$. Since $(n-1) \equiv (2n-1) + y + n \pmod{2n}$, we infer that $y = 0$, and the assertion follows.

\medskip
\noindent
{\bf CASE 2.} \, $|S_{G_0}| \, \in \, [4, 2n + 2]$.

\medskip
\noindent
{\bf SUBCASE 2.1.} \, $n = 2$
\smallskip

Then $G = Q_8$ is the quaternion group. If $|S_{G_0}| = 6$, then by Proposition \ref{3.3}, we have that
\[
  S \, = \, (\alpha^{x}\tau)^{[4]} \, \bdot \, (\alpha^{y}\tau)^{[2]} \,
\]
where $x, y \in [0,3]$ such that $2x \not\equiv 2y \pmod{4}$ and $2y + x \equiv 3(x+2) \pmod{4}$. Since $2y \equiv 2x + 2 \pmod{4}$, it follows by letting $\alpha_1 = \alpha^{x}\tau$ and $\tau_1 = \alpha^{y}\tau$ that $S = \alpha^{[4]}_1 \bdot \tau^{[2]}_1$, where $G = \la \alpha_1, \tau_1 \t \alpha^{4}_1 = 1_G, \, \tau^{2}_1 = \alpha^{2}_1, \und \tau_1 \alpha_1 = \alpha^{-1}_1 \tau_1 \ra$, whence the assertion follows.

Suppose that $|S_{G_0}| = 4$, and we may assume by changing generating set if necessary that $S = \alpha^{r_1} \bdot \alpha^{r_2} \bdot \tau \bdot \alpha^{x}\tau \bdot \alpha^{y}\tau \bdot \alpha^{z}\tau$ for some $r_1, r_2 \in [1,3]$ and $x, y, z \in [0,3]$.
If $\alpha^{r_1} \alpha^{r_2}  \tau  \alpha^{x}\tau  \alpha^{y}\tau \alpha^{z}\tau = 1_G$, then $S' = \alpha^{r_1} \bdot \alpha^{r_2} \bdot \alpha^{-x+2} \bdot \alpha^{y-z+2} \in \mathcal A \big( \la \alpha \ra \big)$, and hence it follows by applying Lemma \ref{2.2}.1 that $r_1 \equiv r_2 \equiv -x+2 \equiv y-z+2 \equiv j \pmod{4}$ for some odd $j \in [1,3]$. Thus $S = S_1 \bdot S_2$, where $S_1 = \alpha^{r_1} \bdot \alpha^{x}\tau \bdot \tau$ and $S_2 = \alpha^{r_2} \bdot \alpha^{z}\tau \bdot \alpha^{y}\tau$ are both product-one sequences, contradicting that $S \in \mathcal A (G)$. Hence we can assume that $\alpha^{r_1} \tau \alpha^{r_2} \alpha^{x}\tau  \alpha^{y}\tau \alpha^{z}\tau = 1_G$, and we consider
\[
  S'' \, = \, \alpha^{r_1} \bdot \alpha^{-r_2} \bdot \tau \bdot \alpha^{x}\tau \bdot \alpha^{y}\tau \bdot \alpha^{z}\tau \, \in \, \mathcal B (G) \,.
\]
Then, by the same argument as shown above, we obtain that $S'' \notin \mathcal A (G)$. Let $S'' = U_1 \bdot U_2$ for some $U_1, U_2 \in \mathcal B (G)$.
Since $S$ is a minimal product-one sequence, we must have that
\[
  U_1 \, = \, \alpha^{r_1} \bdot \alpha^{-r_2} \quad \und \quad U_2 \, = \, \tau \bdot \alpha^{x}\tau \bdot \alpha^{y}\tau \bdot \alpha^{z}\tau
\]
are both minimal product-one sequences. Then $r_1 = r_2$, and we may assume that $\tau \alpha^{x}\tau \alpha^{y}\tau \alpha^{z}\tau = 1_G$. Then $U_2 \in \mathcal A (G)$ implies that $\alpha^{-x+2} \bdot \alpha^{y-z+2} \in \mathcal A \big( \la \alpha \ra \big)$, whence $x \equiv y-z \pmod{4}$. Since $(\alpha^{2}\tau \bdot \tau) \bdot (\alpha^{2}\tau \bdot \tau)$ is not a minimal product-one sequence, it follows by case distinction on $x, y, z$ that we have
\[
  \begin{aligned}
   U_2 \quad \in & \quad \big\{ \tau^{[4]}, \,\, \tau^{[2]} \bdot (\alpha\tau)^{[2]}, \,\, \tau^{[2]} \bdot (\alpha^{3}\tau)^{[2]}, \,\, \tau^{[2]} \bdot \alpha\tau \bdot \alpha^{3}\tau, \\
                 & \quad \quad \tau \bdot (\alpha\tau)^{[2]} \bdot \alpha^{2}\tau, \,\, \tau \bdot \alpha^{2}\tau \bdot (\alpha^{3}\tau)^{[2]}, \,\, \tau \bdot \alpha\tau \bdot \alpha^{2}\tau \bdot \alpha^{3}\tau \big\} \,.
  \end{aligned}
\]
Since $S \in \mathcal A (G)$, we can assume by changing the generator $\alpha$ for $\alpha^{3}$ if necessary that $r_1 = r_2 = 1$, and thus we must have $U_2 = \tau^{[4]}$, for otherwise, $S$ is the product of two product-one sequences, a contradiction. By letting $\alpha_1 = \tau$ and $\tau_1 = \alpha^{r_1}$, we obtain that $S = \alpha^{[4]}_1 \bdot \tau^{[2]}_1$, where $G = \la \alpha_1, \tau_1 \t \alpha^{4}_1 = 1_G, \, \tau^{2}_1 = \alpha^{2}_1, \und \tau_1 \alpha_1 = \alpha^{-1}_1 \tau_1 \ra$, whence the assertion follows.

\medskip
\noindent
{\bf SUBCASE 2.2.} \, $n \ge 3$.
\smallskip

Then we may assume by changing generating set if necessary that $S = T_1 \bdot \tau \bdot T_2 \bdot T_3 \bdot \alpha^{x}\tau$, where $x \in [0,2n-1]$, $T_1, T_2 \in \mathcal F \big( \la \alpha \ra \big)$ with $|T_2| \ge |T_1| \ge 0$, and $T_3 \in \mathcal F (G_0)$ with $|T_3| = |S_{G_0}| - 2$. Moreover, we suppose that $\pi^{*} (T_1)  (\tau) \pi^{*} \big(T_2 \bdot T'_3 \big) (\alpha^{x}\tau) = 1_G$, where $T_3 = \prod^{\bullet}_{i \in [1,|T_3|]} g_i$ is an ordered sequence and $T'_3 = \prod^{\bullet}_{i \in [1,|T_3|/2]} (g_{2i-1}g_{2i}) \in \mathcal F \big( \la \alpha \ra \big)$.
Then $T_1$ and $T_2 \bdot T'_3$ are both product-one free sequences and
\[
  |T_1 \bdot T_2 \bdot T'_3| \, = \, \Big( 3n - |S_{G_0}| \Big) + \frac{|S_{G_0}|-2}{2} \, \ge \, 2n - 2 \,.
\]

If $|T_1 \bdot T_2 \bdot T'_3| \ge 2n$, then we infer that there exists a product-one subsequence $Z$ of $S$ such that $S \bdot Z^{[-1]}$ is again a product-one sequence (this follows by the same line of the proof as used in {\bf SUBCASE 2.2} of Theorem \ref{4.2}), contradicting that $S \in \mathcal A (G)$.

Suppose that $|T_1 \bdot T_2 \bdot T'_3| = 2n-1$. Then $|T'_3| = n-1$ and $|T_2| \ge \frac{n}{2}$. Since $T_2 \bdot T'_3$ is a product-one free sequence with $|T_2 \bdot T'_3| \ge \frac{3n}{2}-1 \ge \frac{2n+1}{2}$, it follows by Lemma \ref{2.2} that $T_2 \bdot T'_3$ is $g$-smooth for some $g \in \la \alpha \ra$ with $\ord (g) = 2n$, and for every $z \in \Pi(T_2 \bdot T'_3)$, there exists a subsequence $W \t T_2 \bdot T'_3$ with $g \t W$ such that $\pi^{*} (W) = z$. Since $|T'_3| = n - 1$, Lemma \ref{2.2}.3 implies that $g \t T'_3$.

If $\Pi(T_1) \cap \Pi(T_2 \bdot T'_3) \neq \emptyset$, then there exist subsequences $W_1 \t T_1$, $W_2 \t T_2$, and $W'_3 \t T'_3$ such that $W'_3$ is a non-trivial sequence (as argued in similar cases) and $\pi^{*} (W_1) = \pi^{*} (W_2 \bdot W'_3)$. Let $W_3$ denote the corresponding subsequence of $T_3$ and assume that $W_3 = (\alpha^{y_1}\tau) \bdot (\alpha^{y_2}\tau) \bdot W''_3$. Then $Z = W_2 \bdot (\alpha^{y_1}\tau) \bdot W_1 \bdot (\alpha^{y_2}\tau) \bdot W''_3$ and $S \bdot Z^{[-1]}$ are both product-one sequences, contradicting that $S \in \mathcal A (G)$.
Hence $\Pi(T_1) \cap \Pi(T_2 \bdot T'_3) = \emptyset$, and it follows that $T^{-1}_1 \bdot T_2 \bdot T'_3$ is a product-one free sequence of length $2n-1$. By Lemma \ref{2.2}.1, there exists an odd $j \in [1,2n-1]$ such that
\[
  T^{-1}_1 \bdot T_2 \bdot T'_3 \, = \, \big( \alpha^{j} \big)^{[2n-1]} \,,
\]
and we may assume by changing generating set if necessary that $j = 1$ so that $x \equiv 1 + n \pmod{2n}$. Note that $2n - 2 - 2|T_1| \ge 0$ is even.
If $|T_1| \ge 1$, then
\[
  \big( \alpha \bdot \alpha^{-1} \big)^{[|T_1|]} \quad \und \quad \alpha^{[1+ \frac{2n-2-2|T_1|}{2}]} \bdot \tau \bdot \alpha^{[\frac{2n-2-2|T_1|}{2}]} \bdot (\alpha^{x}\tau)
\]
are both product-one sequences, contradicting that $S \in \mathcal A (G)$.
Thus $|T_1| = 0$, and we obtain that $T_3 = \big( \alpha^{r + 1}\tau \bdot \alpha^{r + n}\tau \big)^{[n-1]}$ for some $r \in [0,2n-1]$ (as argued in similar cases). Since $x \equiv 1 + n \pmod{2n}$, we obtain that $S = (\alpha \bdot \tau \bdot \alpha^{x}\tau) \bdot (\alpha \bdot \alpha^{r+n}\tau \bdot \alpha^{r+1}\tau)^{[n-1]}$, contradicting that $S \in \mathcal A (G)$.

Suppose that $|T_1 \bdot T_2 \bdot T'_3| = 2n-2$. Then $|T'_3| = n$ and $|T_2| \ge \frac{n}{2} - 1$. Since $T_2 \bdot T'_3$ is a product-one free sequence with $|T_2 \bdot T'_3| \ge \frac{3n}{2}-1 \ge \frac{2n+1}{2}$, it follows by Lemma \ref{2.2} that $T_2 \bdot T'_3$ is $g$-smooth for some $g \in \la \alpha \ra$ with $\ord (g) = 2n$, and for every $z \in \Pi(T_2 \bdot T'_3)$, there exists a subsequence $W \t T_2 \bdot T'_3$ with $g \t W$ such that $\pi^{*} (W) = z$. Since $|T'_3| \ge n - 1$, Lemma \ref{2.2}.3 implies that $g \t T'_3$.

If $\Pi(T_1) \cap \Pi(T_2 \bdot T'_3) \neq \emptyset$, then there exist subsequences $W_1 \t T_1$, $W_2 \t T_2$, and $W'_3 \t T'_3$ such that $W'_3$ is a non-trivial sequence (as argued in similar cases) and $\pi^{*} (W_1) = \pi^{*} (W_2 \bdot W'_3)$. Let $W_3$ denote the corresponding subsequence of $T_3$ and assume that $W_3 = (\alpha^{y_1}\tau) \bdot (\alpha^{y_2}\tau) \bdot W''_3$. Then $Z = W_2 \bdot (\alpha^{y_1}\tau) \bdot W_1 \bdot (\alpha^{y_2}\tau) \bdot W''_3$ and $S \bdot Z^{[-1]}$ are both product-one sequences, contradicting that $S \in \mathcal A (G)$.
Hence $\Pi(T_1) \cap \Pi(T_2 \bdot T'_3) = \emptyset$, and it follows that $T^{-1}_1 \bdot T_2 \bdot T'_3$ is a product-one free sequence of length $2n-2$. By Lemma \ref{2.2}.2, there exists an odd $j \in [1,2n-1]$ such that either
\[
  T^{-1}_1 \bdot T_2 \bdot T'_3 \, = \, \big( \alpha^{j} \big)^{[2n-3]} \bdot \alpha^{2j} \quad \mbox{ or } \quad T^{-1}_1 \bdot T_2 \bdot T'_3 \, = \, \big( \alpha^{j} \big)^{[2n-2]} \,,
\]
and we may assume by changing generating set if necessary that $j = 1$ so that either
\[
   T^{-1}_1 \bdot T_2 \bdot T'_3 \, = \, \alpha^{[2n-3]} \bdot \alpha^{2} \,, \quad \mbox{ whence } \,\, x \, \equiv 1 + n \pmod{2n} \,,
\]
or else
\[
  T^{-1}_1 \bdot T_2 \bdot T'_3 \, = \, \alpha^{[2n-2]} \,, \quad \mbox{ whence } \,\, x \, \equiv \, 2 + n \pmod{2n} \,.
\]

Suppose that $T^{-1}_1 \bdot T_2 \bdot T'_3 = \alpha^{[2n-3]} \bdot \alpha^{2}$ and $x \equiv 1 + n \pmod{2n}$.
If $|T_1| \ge 1$ and $\alpha^{-2} \in \supp(T_1)$, then
\[
  \big( \alpha^{-2} \bdot \alpha \bdot \alpha \big) \bdot \big( \alpha \bdot \alpha^{-1} \big)^{[|T_1|-1]} \quad \und \quad \alpha^{[1+\frac{2n-4-2|T_1|}{2}]} \bdot \tau \bdot \alpha^{[\frac{2n-4-2|T_1|}{2}]} \bdot \alpha^{x}\tau
\]
are both product-one sequences, contradicting that $S \in \mathcal A (G)$. If $|T_1| \ge 1$ and $\alpha^{-2} \notin \supp(T_1)$, then
\[
  \big( \alpha \bdot \alpha^{-1} \big)^{[|T_1|]} \quad \und \quad \alpha^{2} \bdot \alpha^{[\frac{2n-4-2|T_1|}{2}]} \bdot \tau \bdot \alpha^{[1+\frac{2n-4-2|T_1|}{2}]} \bdot \alpha^{x}\tau
\]
are both product-one sequences, contradicting that $S \in \mathcal A (G)$. Thus we obtain that $|T_1| = 0$.

If $\alpha^{2} \in \supp(T_2)$, then $T_3 = \big( \alpha^{r + 1}\tau \bdot \alpha^{r + n}\tau \big)^{[n]}$ for some $r \in [0,2n-1]$ (as argued in similar cases). Since $x \equiv 1 + n \pmod{2n}$, we obtain that
\[
  S_1 \, = \, \alpha^{r+1}\tau \bdot \alpha^{r+n}\tau \bdot \alpha^{x}\tau \bdot \alpha^{2} \bdot \tau \,, \quad S_2 \, = \, \big( \alpha^{r+1}\tau \bdot \alpha^{r+n}\tau \big)^{[2]} \,, \und S_3 \, = \, \alpha^{r+n}\tau \bdot \alpha^{r+1}\tau \bdot \alpha
\]
are all product-one sequences, whence $S = S_1 \bdot S_2 \bdot S^{[n-3]}_3$, contradicting that $S \in \mathcal A (G)$. If $\alpha^{2} \in \supp(T'_3)$, then $T_3 = \big( \alpha^{r_1 + 1}\tau \bdot \alpha^{r_1 + n}\tau \big)^{[n-1]} \bdot \big( \alpha^{r_2 + 2}\tau \bdot \alpha^{r_2 + n}\tau \big)$ for some $r_1, r_2 \in [0,2n-1]$ (as argued in similar cases). Since $x \equiv 1 + n \pmod{2n}$, we obtain that
\[
  S_1 \, = \, \alpha^{r_2 + n}\tau \bdot \alpha^{r_2 + 2}\tau \bdot \alpha^{r_1 + 1}\tau \bdot \alpha^{r_1 + n}\tau \bdot \alpha^{x}\tau \bdot \tau \quad \und \quad S_2 \, = \, \alpha^{r_1 + n}\tau \bdot \alpha^{r_1 + 1}\tau \bdot \alpha
\]
are both product-one sequences, whence $S = S_1 \bdot S^{[n-2]}_2$, contradicting that $S \in \mathcal A (G)$.

Suppose that $T^{-1}_1 \bdot T_2 \bdot T'_3 = \alpha^{[2n-2]}$ and $x \equiv 2 + n \pmod{2n}$. If $|T_1| \ge 1$, then
\[
  \big( \alpha \bdot \alpha^{-1} \big)^{[|T_1|]} \quad \und \quad \alpha^{[2+\frac{2n-4-2|T_1|}{2}]} \bdot \tau \bdot \alpha^{[\frac{2n-4-2|T_1|}{2}]} \bdot \alpha^{x}\tau
\]
are both product-one sequences, contradicting that $S \in \mathcal A (G)$. Thus $|T_1| = 0$, and we obtain that $T_3 = \big( \alpha^{r + 1}\tau \bdot \alpha^{r + n}\tau \big)^{[n]}$ for some $r \in [0,2n-1]$ (as argued in similar cases). Since $x \equiv 2 + n \pmod{2n}$, we obtain that
\[
  S_1 \, = \, \tau \bdot \alpha^{x}\tau \bdot \big( \alpha^{r + 1}\tau \bdot \alpha^{r + n}\tau \big)^{[2]} \quad \und \quad S_2 \, = \, \alpha^{r + n}\tau \bdot \alpha^{r + 1}\tau \bdot \alpha
\]
are both product-one sequences, whence $S = S_1 \bdot S^{[n-2]}_2$, contradicting that $S \in \mathcal A (G)$.
\end{proof}


\bigskip
\section{Unions of sets of lengths} \label{5}
\bigskip

In this section, we study sets of lengths and their unions in the monoid $\mathcal B (G)$ of product-one sequences over   Dihedral and Dicyclic groups. To do so, we briefly gather the required concepts in the setting of atomic monoids.

Let $H$ be an atomic monoid, this means a  commutative, cancellative semigroup with unit element such that every non-unit element can be written as a finite product of atoms.
If $a = u_1 \cdot \ldots \cdot u_k \in H$, where $k \in \N$ and $u_1, \ldots, u_k$ are atoms of $H$, then $k$ is called the length of the factorization and
\[
  \mathsf L (a) \, = \, \{ k \in \N \t a \, \text{ has a factorization of length } \, k \} \, \subset \, \N
\]
is the {\it set of lengths} of $a$. As usual we set $\mathsf L (a) = \{0\}$ if $a$ is invertible, and then
\[
  \mathcal L (H) \, = \, \{ \mathsf L (a) \t a \in H \}
\]
denotes the {\it system of sets of lengths} of $H$. If $k \in \N$ and  $H$ is not a group, then
\[
  \mathcal U_k (H) \, = \, \bigcup_{k \in L, L \in \mathcal L (H)} L \, \subset \, \N
\]
denotes the {\it union of sets of lengths} containing $k$.
For every $k \in \N$, $\rho_k (H) = \sup \mathcal U_k (H)$ is  the {\it $k$th-elasticity} of $H$, and we denote by $\lambda_k (H) = \inf \mathcal U_k (H)$. Moreover,
\[
  \rho (H) \, = \, \sup \bigg\{ \frac{\rho_k (H)}{k} \t k \in \N \bigg\} \, = \, \underset{k \to \infty}{\lim} \frac{\rho_k (H)}{k}
\]
is the {\it elasticity} of $H$. Unions of sets of lengths have been studied in settings ranging from power monoids to Mori domains and to local quaternion orders (for a sample of recent results we refer to \cite{Ge-Ka-Re15a, F-G-K-T17, Tr18a,Fa-Tr18a,Ba-Sm18}).

Let $G$ be a finite group. The monoid $\mathcal B (G)$ of product-one sequences over $G$ is a finitely generated reduced monoid, and it is a Krull monoid if and only if $G$ is abelian (\cite[Proposition 3.4]{Oh18a}). If $G$ is abelian, then most features of the arithmetic of a general Krull monoid having class group $G$ and prime divisors in all classes can be studied in the monoid $\mathcal B (G)$. For this reason, $\mathcal B (G)$ has received extensive investigations (see \cite{Sc16a} for a survey). If $G$ is non-abelian, then $\mathcal B (G)$ fails to be Krull but it is still a C-monoid (\cite[Theorem 3.2]{Cz-Do-Ge16a}). Thus it shares all arithmetical finiteness properties valid for abstract C-monoids (\cite{Ge-HK06a, Ge-Zh19c}). Investigations aiming at precise results for arithmetical invariants were started in \cite{Oh18a, Oh19a}. We continue them in this section and obtain explicit upper and lower bounds in the case of Dihedral and Dicyclic groups.
As usual, we set
\[
  \mathcal L (G) \, = \, \mathcal L \big( \mathcal B (G) \big), \quad \mathcal U_k (G) \, = \, \mathcal U_k \big( \mathcal B (G) \big), \quad \rho_k (G) \, = \, \rho_k \big( \mathcal B (G) \big), \und \rho (G) \, = \, \rho \big( \mathcal B (G) \big)
\]
for every $k \in N$. It is well-known that $\mathcal U_k (G) = \{ k \}$ for all $k \in \N$ if and only if $|G| \le 2$.
Thus, whenever convenient, we will assume that $|G| \ge 3$. It is already known that the sets $\mathcal U_k (G)$ are intervals (\cite[Theorem 5.5.1]{Oh18a}). Our study of the minima $\lambda_k (G)$ runs along the lines of what was done in the abelian case (\cite[Section 3.1]{Ge09a}). The study of the maxima $\rho_k (G)$ substantially uses the results of Section \ref{4}.

\smallskip
\begin{lemma} \label{5.1}~
Let $G$ be a finite group with $|G| \ge 3$ and let $k \in \N$.
\begin{enumerate}
\item $\rho_k (G) \le \frac{k\mathsf D (G)}{2}$ and $\rho_{2k} (G) = k\mathsf D (G)$. In particular, $\rho (G) = \frac{\mathsf D (G)}{2}$.

\smallskip
\item If $j, l \in \N_0$ such that $l\mathsf D (G) + j \ge 1$, then
      \[
        2l + \frac{2j}{\mathsf D (G)} \,\, \le \,\, \lambda_{l\mathsf D (G)+j} (G) \,\, \le \,\, 2l +j \,.
      \]
      In particular, $\lambda_{l\mathsf D (G)} (G) = 2l$ for every $l \in \N$.
\end{enumerate}
\end{lemma}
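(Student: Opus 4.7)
My plan is to prove both parts of the lemma in parallel, by extracting a single length-count inequality that simultaneously controls $\rho_k(G)$ from above and $\lambda_k(G)$ from below, and then realizing all matching bounds with one explicit test sequence.

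The first step is bookkeeping. For any $B \in \mathcal B(G)$ and any factorization $B = U_1 \bdot \ldots \bdot U_\ell$ in $\mathcal A(G)$, one has $1 \le |U_i| \le \mathsf D(G)$ by the definition of $\mathsf D(G)$. Since the only atom of length $1$ is $1_G$ itself (viewed as a length-$1$ sequence), the number of length-$1$ factors in every factorization of $B$ equals $m := \mathsf v_{1_G}(B)$; writing $B = 1_G^{[m]} \bdot B'$ we obtain $\mathsf L(B) = m + \mathsf L(B')$, and every atom in a factorization of $B'$ has length $\ge 2$. Hence for every $\ell \in \mathsf L(B)$,
\[
  2(\ell - m) \,\le\, |B| - m \,\le\, \mathsf D(G)(\ell - m).
\]
Applying this to two lengths $k, \ell \in \mathsf L(B)$ in opposite directions gives $2(\ell - m) \le \mathsf D(G)(k-m)$ and, by symmetry, $2(k-m) \le \mathsf D(G)(\ell - m)$; since $\mathsf D(G) \ge 2$, the outer inequalities collapse to
\[
  \frac{2k}{\mathsf D(G)} \,\le\, \ell \,\le\, \frac{k\, \mathsf D(G)}{2}.
\]
Taking $\ell = \rho_k(G)$ yields the upper bound in (1), and taking $\ell = \lambda_k(G)$ with $k = l\mathsf D(G) + j$ yields the lower bound $2l + 2j/\mathsf D(G) \le \lambda_{l\mathsf D(G)+j}(G)$ in (2).

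For the remaining matching bounds---the lower bound $\rho_{2k}(G) \ge k\mathsf D(G)$ in (1) and the upper bound $\lambda_{l\mathsf D(G)+j}(G) \le 2l + j$ in (2)---I would use a single test sequence. Fix $U = g_1 \bdot \ldots \bdot g_{\mathsf D(G)} \in \mathcal A(G)$ of maximal length. The elementwise-inverse map $S \mapsto S^{-1}$ is a monoid automorphism of $\mathcal F(G)$ (by commutativity of $\bdot$) that preserves $\mathcal B(G)$, since if an ordering of $S$ has product $1_G$ then the reversed ordering of $S^{-1}$ has product $1_G$ as well; hence $U^{-1} \in \mathcal A(G)$. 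When $j \ge 1$, choose any $g \in G \setminus \{1_G\}$ and let $A := g \bdot g^{-1}$ (understood as $g^{[2]}$ when $\ord(g) = 2$), which is an atom of length $2$. Set
\[
  B \,:=\, U^{[l]} \bdot (U^{-1})^{[l]} \bdot A^{[j]} \,\in\, \mathcal B(G).
\]
A ``coarse'' factorization of $B$ uses $l$ copies of $U$, $l$ of $U^{-1}$, and $j$ of $A$, so its length is $2l + j$. A ``fine'' factorization pairs, for each $i \in [1, \mathsf D(G)]$, the $l$ copies of $g_i$ in $U^{[l]}$ with the $l$ copies of $g_i^{-1}$ in $(U^{-1})^{[l]}$ into $l$ length-$2$ atoms $g_i \bdot g_i^{-1}$ (atoms since $g_i \ne 1_G$), and keeps the $j$ copies of $A$; its length is $l \mathsf D(G) + j$. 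Therefore $\{2l + j,\, l\mathsf D(G) + j\} \subset \mathsf L(B)$, which directly gives $\lambda_{l\mathsf D(G) + j}(G) \le 2l + j$ and, specializing to $l = k$, $j = 0$, also $\rho_{2k}(G) \ge k\mathsf D(G)$.

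Combining the two directions yields $\rho_{2k}(G) = k\mathsf D(G)$, and then $\rho(G) = \lim_k \rho_{2k}(G)/(2k) = \mathsf D(G)/2$; setting $j = 0$ in (2) gives $\lambda_{l\mathsf D(G)}(G) = 2l$. The one delicate point in the whole argument is the length-$1$ atom bookkeeping in the first step, but $m$ washes out of both inequalities precisely because $\mathsf D(G) \ge 2$, so no bound is weakened; everything else reduces to the verification of the explicit factorizations above.
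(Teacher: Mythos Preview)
Your proof is correct and takes a more elementary, self-contained route than the paper. The paper cites \cite{Oh18a} for all of part~(1), and for the upper bound in part~(2) it invokes the nontrivial fact that $\mathcal U_k(G)$ is an interval (again from \cite{Oh18a}) together with the symmetry $k \in \mathcal U_\ell(G) \Leftrightarrow \ell \in \mathcal U_k(G)$. You instead extract both the upper bound on $\rho_k(G)$ and the lower bound on $\lambda_k(G)$ from the single length-count inequality $2(\ell-m) \le |B|-m \le \mathsf D(G)(\ell-m)$, where the bookkeeping with $m = \mathsf v_{1_G}(B)$ correctly handles the length-$1$ atoms and washes out because $\mathsf D(G) \ge 2$; and you realize the matching bounds with the explicit sequence $U^{[l]}\bdot(U^{-1})^{[l]}\bdot A^{[j]}$, exhibiting both factorization lengths directly. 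What you gain is that nothing external is needed: you do not use the interval structure of $\mathcal U_k(G)$ at all. What the paper's route buys is brevity once those results are in hand, and it illustrates how part~(2) follows formally from part~(1) and the general machinery.
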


\begin{proof}
1. \cite[Proposition 5.6]{Oh18a}.

\smallskip
2. Let $j, l \in \N_0$ such that $l\mathsf D (G) + j \ge 1$. Note that there is some $L \in \mathcal L (G)$ with $k, \lambda_k (G) \in L$, and it follows that
\[
  k \le \max{L} \le \rho (G) \min{L} = \rho (G) \lambda_k (G) \,.
\]
Hence we obtain that
\[
  2l + \frac{2j}{\mathsf D (G)} \,\, = \,\, \rho (G)^{-1} (l\mathsf D (G) + j) \,\, \le \,\, \lambda_{l\mathsf D (G) +j} \,.
\]
Since $2 \le \mathsf D (G)$, it follows by 1. that
\[
  \lambda_{2l+j} (G) \, \le \, 2l + j \, \le \, l \mathsf D (G) + j \, \le \, \rho_{2l} (G) + \rho_j (G) \, \le \, \rho_{2l + j} (G) \,,
\]
whence $l \mathsf D (G) + j \in \mathcal U_{2l+j} (G)$ (by \cite[Theorem 5.5.1]{Oh18a}), equivalently $2l + j \in \mathcal U_{l\mathsf D (G) + j} (G)$. Therefore
\[
  2l + \frac{2j}{\mathsf D (G)} \,\, \le \,\, \lambda_{l\mathsf D (G) +j} \,\, \le \,\, 2l + j \,.
\]
If $j = 0$, then $\lambda_{l\mathsf D (G)} (G) = 2l$.
\end{proof}

\smallskip
\begin{lemma} \label{5.2}~
Let $G$ be a finite group with $|G| \ge 3$. For every $j \in \N_{\ge 2}$, the following statements are equivalent{\rm \,:}
\begin{enumerate}
\item[(a)] There exists some $L \in \mathcal L (G)$ with $\{ 2, j \} \subset L$.

\smallskip
\item[(b)] $j \le \mathsf D (G)$.
\end{enumerate}
\end{lemma}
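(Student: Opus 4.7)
The plan is to recast condition (a) as the statement $j \in \mathcal U_2 (G)$ and then to pin down $\mathcal U_2 (G)$ exactly as the discrete interval $[2, \mathsf D (G)]$, using only facts already assembled in this section.

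For the direction (a) $\Rightarrow$ (b), if $\{2, j\} \subset L$ for some $L \in \mathcal L (G)$, then by definition $j \in \mathcal U_2 (G)$, whence $j \le \rho_2 (G)$. Specializing $k = 1$ in Lemma \ref{5.1}.1 gives $\rho_2 (G) = \mathsf D (G)$, and so $j \le \mathsf D (G)$.

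For the converse (b) $\Rightarrow$ (a), I would invoke three ingredients: first, $\mathcal U_2 (G)$ is a discrete interval by \cite[Theorem 5.5.1]{Oh18a}, as recalled in the paragraph preceding Lemma \ref{5.1}; second, its supremum equals $\rho_2 (G) = \mathsf D (G)$ by Lemma \ref{5.1}.1; and third, $2 \in \mathcal U_2 (G)$, which is witnessed for instance by $B = 1_G \bdot 1_G$ (whose only factorization is $1_G \cdot 1_G$, giving $\mathsf L (B) = \{2\}$), or alternatively by applying Lemma \ref{5.1}.2 with $l = 0$ and $j = 2$ to obtain $\lambda_2 (G) \le 2$. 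Combining these three facts gives $\mathcal U_2 (G) \supset [2, \mathsf D (G)]$, so the hypothesis $2 \le j \le \mathsf D (G)$ forces $j \in \mathcal U_2 (G)$, and unwinding the definition furnishes the desired $L \in \mathcal L (G)$ with $\{2, j\} \subset L$.

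The entire argument is essentially a bookkeeping exercise with $\rho_2 (G)$, $\lambda_2 (G)$, and the interval property of unions of sets of lengths, so no genuine obstacle presents itself; all the substantive work has already been done in Lemma \ref{5.1} and \cite{Oh18a}.
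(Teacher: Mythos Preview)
Your proof is correct, and the (a) $\Rightarrow$ (b) direction is identical to the paper's. For (b) $\Rightarrow$ (a), however, you take a genuinely different route. The paper gives an explicit construction: choose any atom $U = g_1 \bdot \ldots \bdot g_{\ell}$ with $\ell \ge j$ and $g_1 \cdots g_{\ell} = 1_G$, collapse its tail to form the length-$j$ atom $V = g_1 \bdot \ldots \bdot g_{j-1} \bdot (g_j \cdots g_{\ell})$, and observe that $\{2, j\} \subset \mathsf L(V \bdot V^{-1})$. This is entirely self-contained. Your argument instead invokes the interval structure of $\mathcal U_2(G)$ from \cite[Theorem 5.5.1]{Oh18a} together with the endpoint identifications $\lambda_2(G) = 2$ and $\rho_2(G) = \mathsf D(G)$, which is perfectly valid and arguably cleaner bookkeeping, but it trades an elementary hands-on construction for reliance on an external structural theorem. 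The paper's version has the advantage of exhibiting a concrete witness $V \bdot V^{-1}$, which is sometimes useful downstream; yours has the advantage of making transparent that the lemma is really just the statement $\mathcal U_2(G) = [2, \mathsf D(G)]$.
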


\begin{proof}
(a) $\Rightarrow$ (b) If $L \in \mathcal L (G)$ with $\{ 2, j \} \subset L$, then Lemma \ref{5.1}.1 implies that $j \le \sup{L} \le \rho_2 (G) = \mathsf D (G)$.

\smallskip
(b) $\Rightarrow$ (a) If $j \le \mathsf D (G)$, then there exists some $U \in \mathcal A (G)$ with $|U| = \ell \ge j$, say $U = g_1 \bdot \ldots \bdot g_{\ell}$ with $g_1g_2 \ldots g_{\ell} = 1_G$. Then $V = g_1 \bdot \ldots \bdot g_{j-1} \bdot (g_j \cdots g_{\ell}) \in \mathcal A (G)$, and $\{ 2, j \} \subset \mathsf L \big(V\bdot V^{-1}\big)$.
\end{proof}

\smallskip
\begin{proposition} \label{5.3}~
Let $G$ be a finite group with $|G| \ge 3$. For every $l \in \N_0$, we have
\begin{displaymath}
\lambda_{l\mathsf D (G) + j} (G)  = \left\{ \begin{array}{ll}
                                            2l     & \textrm{for $j = 0$}, \\
                                            2l + 1 & \textrm{for $j \in [1, \rho_{2l+1} (G) - l\mathsf D (G)]$},\\
                                            2l + 2 & \textrm{for $j \in [\rho_{2l+1} (G) - l\mathsf D (G) + 1, \mathsf D (G) -1]$},
                                            \end{array} \right.
\end{displaymath}
provided that $l\mathsf D (G) + j \ge 1$.
\end{proposition}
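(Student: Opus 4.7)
The plan is to reformulate the problem about $\lambda_n$ (with $n := l \mathsf D (G) + j$) as one about the intervals $\mathcal U_k (G) = [\lambda_k (G), \rho_k (G)]$ (they are intervals by Theorem 5.5.1 of \cite{Oh18a}) and then to pin down the endpoints using Lemma \ref{5.1} and Lemma \ref{5.2}. Explicitly,
\[
  \lambda_n (G) \, = \, \min \{ k \in \N \t \lambda_k (G) \le n \le \rho_k (G) \} \,.
\]

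The case $j = 0$ is precisely Lemma \ref{5.1}.2, so assume henceforth $j \ge 1$. Lemma \ref{5.1}.1 gives $\rho_k (G) \le k \mathsf D (G)/2 \le l \mathsf D (G) < n$ for every $k \le 2l$, whence $\lambda_n (G) \ge 2l+1$. Conversely, $\lambda_n (G) \le 2l+1$ holds precisely when $n \le \rho_{2l+1} (G)$, i.e., when $j \le \rho_{2l+1} (G) - l \mathsf D (G)$ (the complementary constraint $\lambda_{2l+1} (G) \le n$ is automatic because $\lambda_{2l+1} (G) \le 2l+1 \le n$; for $l \ge 1$ use $\mathsf D (G) \ge 3$, and for $l = 0$ one has $n = 1$ directly). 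This settles the second line of the formula.

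When $j \ge \rho_{2l+1} (G) - l \mathsf D (G) + 1$, the same interval argument forces $\lambda_n (G) \ge 2l+2$, and it remains to exhibit $X \in \mathcal B (G)$ with $\{ 2l+2, \, n \} \subset \mathsf L (X)$. The preliminary observation $\rho_{2l+1} (G) \ge l \mathsf D (G) + 1$ is obtained as follows: by Lemma \ref{5.1}.1 there is $B \in \mathcal B (G)$ with $\{ 2l, \, l \mathsf D (G) \} \subset \mathsf L (B)$ (e.g., $B = (A \bdot A^{-1})^{[l]}$ for any $A \in \mathcal A (G)$ with $|A| = \mathsf D (G)$, since $A \bdot A^{-1}$ factors as $A \bdot A^{-1}$ or as the product of the atoms $g_i \bdot g_i^{-1}$); attaching any atom $A' \in \mathcal A (G)$ then gives $\{ 2l+1, \, l \mathsf D (G) + 1 \} \subset \mathsf L (B \bdot A')$. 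Consequently, in this subcase $j \ge 2$, so Lemma \ref{5.2} supplies $C \in \mathcal B (G)$ with $\{ 2, j \} \subset \mathsf L (C)$ (the hypothesis $j \le \mathsf D (G)$ holds as $j \le \mathsf D (G) - 1$). Setting $X := B \bdot C$ for $l \ge 1$ (or $X := C$ for $l = 0$), we obtain
\[
  \mathsf L (X) \, \supseteq \, \mathsf L (B) + \mathsf L (C) \, \supseteq \, \{ 2l+2, \, l \mathsf D (G) + j \} \,,
\]
whence $n \in \mathcal U_{2l+2} (G)$ and $\lambda_n (G) = 2l+2$, as required.

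The substantive step is the inequality $\rho_{2l+1} (G) \ge l \mathsf D (G) + 1$, which simultaneously guarantees that the second subinterval is non-empty and that $j \ge 2$ in the third subcase so that Lemma \ref{5.2} can be invoked. Everything else is a routine combination of the interval property of $\mathcal U_k (G)$, Lemma \ref{5.1}, and the inequality $\mathsf L (X \bdot Y) \supseteq \mathsf L (X) + \mathsf L (Y)$.
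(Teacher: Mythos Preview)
Your proof is correct and follows essentially the same approach as the paper's: both rely on Lemma \ref{5.1} for the $j=0$ case and the lower bound $\lambda_n(G)\ge 2l+1$, on Lemma \ref{5.2} to produce $\{2,j\}\subset\mathsf L(C)$ for $j\ge 2$, on the interval property of $\mathcal U_k(G)$ from \cite{Oh18a}, and on the observation $\rho_{2l+1}(G)\ge l\mathsf D(G)+1$ to separate the cases. Your opening reformulation $\lambda_n(G)=\min\{k: n\in\mathcal U_k(G)\}$ is a tidy organizing device, but the substance of the argument is identical to the paper's.
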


\begin{proof}
Let $l \in \N_0$ and $j \in [0, \mathsf D (G) -1]$ such that $l\mathsf D (G) + j \ge 1$. For $j = 0$, the assertion follows from Lemma \ref{5.1}.2. Let $j \in [1,\mathsf D (G) -1]$. Then Lemma \ref{5.1}.2 implies that
\[
  2l + \frac{2j}{\mathsf D (G)} \,\, = \,\, \frac{l\mathsf D (G) +j}{\rho (G)} \,\, \le \,\, \lambda_{l\mathsf D (G) +j} (G) \,\, \le \,\, 2l + j \,.
\]
For the $j = 1$ case, note that $\rho_{2\ell+1} (G) \ge \rho_{2\ell} (G) + 1 = \ell \mathsf D (G) + 1$, so $j = 1$ forces the second of the three cases to hold, and thus we may assume that $j \ge 2$. Then Lemma \ref{5.2} implies that $\{ 2, j \} \subset \mathsf L (U)$ for some $U \in \mathcal B (G)$, whence $\lambda_j (G) = 2$. Thus we have
\[
  \lambda_{l\mathsf D (G) +j} (G) \,\, \le \,\, \lambda_{l\mathsf D (G)} (G) + \lambda_j (G) \,\, = \,\, 2l + 2 \,,
\]
and hence $\lambda_{l\mathsf D (G) +j} (G) \in [2l + 1, 2l + 2]$.

If $j \in [2, \rho_{2l+1} (G) -l\mathsf D (G)]$, then $l \ge 1$, and by \cite[Theorem 5.5.1]{Oh18a}, we obtain that $l\mathsf D (G) +j \in \mathcal U_{2l+1} (G)$, equivalently $2l+1 \in \mathcal U_{l\mathsf D (G) +j} (G)$. Therefore $\lambda_{l\mathsf D (G) +j} (G) \le 2l+1$ and thus $\lambda_{l\mathsf D (G) + j} = 2l + 1$.

If $j > \rho_{2l+1} (G) - l\mathsf D (G)$, then $l\mathsf D (G) + j > \rho_{2l+1} (G)$, and by \cite[Theorem 5.5.1]{Oh18a}, we obtain that $l\mathsf D (G) + j \notin \mathcal U_{2l+1} (G)$, and that $\lambda_{l\mathsf D (G) + j} (G) > 2l + 1$. Therefore $\lambda_{l\mathsf D (G) +j} (G) = 2l + 2$.
\end{proof}

\smallskip
\begin{theorem} \label{5.4}~
Let $G$ be a dihedral group of order $2n$, where $n \in \N_{\ge 3}$ is odd.
Then, for every $k \in \N_{\ge 2}$ and every $l \in \N_0$, we have $\mathcal U_k (G) = [\lambda_k (G), \rho_k (G)]$,
\begin{displaymath}
\rho_k (G) \, = \, kn \quad \und \quad \lambda_{2ln + j} (G) \,\, = \,\, \left\{ \begin{array}{ll}
                                                                      2l + j & \textrm{for $j \in [0,1]$}, \\
                                                                      2l + 2 & \textrm{for $j \ge 2$ and $l = 0$}, \\
                                                                      2l + 1 & \textrm{for $j \in [2, n]$ and $l \ge 1$},\\
                                                                      2l + 2 & \textrm{for $j \in [n+1, 2n-1]$ and $l \ge 1$},
                                                                     \end{array} \right.
\end{displaymath}
provided that $2ln + j \ge 1$.
\end{theorem}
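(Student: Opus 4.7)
The interval shape $\mathcal U_k(G) = [\lambda_k(G), \rho_k(G)]$ is a direct consequence of \cite[Theorem 5.5.1]{Oh18a}, which shows that every $\mathcal U_k(G)$ is a discrete interval; the task thereby reduces to determining the two endpoints separately.

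For $\rho_k(G)$, Lemma \ref{5.1}.1 already yields the universal upper bound $\rho_k(G) \le k\mathsf D(G)/2 = kn$ and the matching lower bound for even $k = 2m$ via $\rho_{2m}(G) = m\mathsf D(G) = 2mn$. For odd $k = 2m+1 \ge 3$ we must exhibit a witness, that is, a product-one sequence $W$ admitting factorizations of lengths $k$ and $kn$. The plan is to use the type (b) atoms supplied by Theorem \ref{4.1}: consider the three length-$2n$ minimal product-one sequences
\[
V_1 = \tau^{[n]} \bdot (\alpha\tau)^{[n]}, \quad V_2 = \tau^{[n]} \bdot (\alpha^{2}\tau)^{[n]}, \quad V_3 = (\alpha\tau)^{[n]} \bdot (\alpha^{2}\tau)^{[n]},
\]
which are valid since the required gcd conditions reduce to $\gcd(1,n) = \gcd(2,n) = 1$, both of which hold because $n$ is odd. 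Setting $W = V_1^{[k-2]} \bdot V_2 \bdot V_3$ yields a product-one sequence of length $2kn$ with an obvious factorization of length $k$; rewriting it as
\[
W = \tau^{[n(k-1)]} \bdot (\alpha\tau)^{[n(k-1)]} \bdot (\alpha^{2}\tau)^{[2n]}
\]
and observing that $n(k-1)$ is even (both $n$ and $k$ being odd) produces a factorization into the length-two atoms $\tau^{[2]}$, $(\alpha\tau)^{[2]}$, and $(\alpha^{2}\tau)^{[2]}$ of total length $n(k-1)+n = kn$, which gives $\rho_k(G) \ge kn$.

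For $\lambda_k(G)$, the plan is to apply Proposition \ref{5.3} with $\mathsf D(G) = 2n$. Writing $k = 2ln + j$ uniquely with $l \ge 0$ and $j \in [0, 2n-1]$, we substitute $\rho_{2l+1}(G) = (2l+1)n$ (from the previous paragraph) for $l \ge 1$, giving $\rho_{2l+1}(G) - l\mathsf D(G) = n$. Proposition \ref{5.3} then delivers $\lambda_k(G) = 2l$ for $j = 0$, $2l+1$ for $j \in [1, n]$, and $2l+2$ for $j \in [n+1, 2n-1]$, matching the stated formula. For $l = 0$ the values $\lambda_0 = 0$ and $\lambda_1 = 1$ are trivial, and for $j \in [2, 2n-1]$ Lemma \ref{5.2} furnishes $U \in \mathcal B(G)$ with $\{2, j\} \subset \mathsf L(U)$, whence $\lambda_j(G) = 2$.

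The principal difficulty is the odd-$k$ lower bound for $\rho_k(G)$. The abstract bounds of Lemma \ref{5.1}.1 alone are insufficient, and the lower bound is obtained only through the explicit construction above, which is made possible by Theorem \ref{4.1}(b) supplying a triple $V_1, V_2, V_3$ of length-$\mathsf D(G)$ atoms whose pairwise supports form a triangle on $\{\tau, \alpha\tau, \alpha^{2}\tau\}$. The parity hypothesis on $n$ is essential here: it is what forces $\gcd(2,n) = 1$ and therefore secures the atomicity of $V_2$.
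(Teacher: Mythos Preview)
Your proof is correct and follows essentially the same approach as the paper: the interval structure comes from \cite[Theorem 5.5.1]{Oh18a}, the $\lambda_k$ values follow from Proposition \ref{5.3} once $\rho_{2l+1}(G)=(2l+1)n$ is known, and the crucial odd-$k$ lower bound for $\rho_k$ is obtained from the same triple of type-(b) atoms $\tau^{[n]}\bdot(\alpha\tau)^{[n]}$, $\tau^{[n]}\bdot(\alpha^{2}\tau)^{[n]}$, $(\alpha\tau)^{[n]}\bdot(\alpha^{2}\tau)^{[n]}$ supplied by Theorem \ref{4.1}. The only cosmetic difference is that the paper verifies $\rho_3(G)\ge 3n$ and then invokes superadditivity $\rho_{2k+3}(G)\ge\rho_3(G)+\rho_{2k}(G)$, whereas you build the witness $V_1^{[k-2]}\bdot V_2\bdot V_3$ directly for each odd $k$; both arguments are equally valid.
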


\begin{proof}
We obtain that $\mathcal U_k (G) = [\lambda_k (G), \rho_k (G)]$ by \cite[Theorem 5.5.1]{Oh18a}. We prove the assertion on $\rho_k (G)$, and then the assertion on $\lambda_{2ln + j} (G)$ follows from Proposition \ref{5.3}.

Let $k \in \N$. If $k$ is even, the assertion follows from Lemma \ref{5.1}.1. For odd $k$, it is sufficient to show that $\rho_3 (G) \ge  3n$. Indeed Lemma \ref{5.1}.1 implies that
\[
  3n + 2kn \,\, \le \,\, \rho_3 (G) + \rho_{2k} (G) \,\, \le \,\, \rho_{2k+3} (G) \,\, \le \,\, \frac{(2k+3)2n}{2} \,\, = \,\, 3n + 2kn \,,
\]
and hence the assertion follows.

Since $n \in \N_{\ge 3}$ is odd, it follows by letting $G = \la \alpha, \tau \ra$ that
\[
  U \, = \, (\alpha\tau)^{[n]} \bdot \tau^{[n]} \,, \quad V \, = \, (\alpha^{2}\tau)^{[n]} \bdot (\alpha\tau)^{[n]} \,, \und W \, = \, (\alpha^{2}\tau)^{[n]} \bdot \tau^{[n]}
\]
are the minimal product-one sequences of length $\mathsf D (G)$ (Theorem \ref{4.1}). Thus we obtain that $\{ 3, 3n \} \subset \mathsf L (U\bdot V\bdot W)$, whence $\rho_3 (G) \ge 3n$.
\end{proof}

\smallskip
\begin{theorem} \label{5.5}~
Let $G$ be either a dihedral group $D_{2n}$ of order $2n$ or a dicyclic group $Q_{4m}$ of order $4m$, where $n \in \N_{\ge 4}$ is even and $m \in \N_{\ge 2}$. Then, for every $k \in \N$, we have
\[
  k \mathsf D (G) + 2 \,\, \overset{(a)}{\le} \,\, \rho_{2k+1} (G) \,\, \overset{(b)}{\le} \,\, k \mathsf D (G) + \frac{\mathsf D (G)}{2} -1 \,.
\]
In particular, if $G$ is isomorphic to $D_8$ or to  $Q_8$, then $\rho_{2k+1} (G) = k \mathsf D (G) + 2$ for every $k \in \N$.
\end{theorem}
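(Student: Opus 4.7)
The two inequalities naturally split the argument, and the ``In particular'' statement falls out immediately from the equality $\mathsf D(G)/2-1=2$ when $\mathsf D(G)=6$.

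For the lower bound (a), I will use the super-additivity $\rho_{a+b}(G)\ge\rho_a(G)+\rho_b(G)$ (which is immediate from concatenating a witness for $\rho_a$ with a witness for $\rho_b$). Combined with $\rho_2(G)=\mathsf D(G)$ from Lemma~\ref{5.1}(1), this reduces the task to the base case $k=1$, namely $\rho_3(G)\ge\mathsf D(G)+2$, because then $\rho_{2k+1}(G)\ge\rho_3(G)+(k-1)\rho_2(G)\ge(\mathsf D(G)+2)+(k-1)\mathsf D(G)=k\mathsf D(G)+2$. For the base case I will produce an explicit $B\in\mathcal B(G)$ with $\{3,\mathsf D(G)+2\}\subseteq\mathsf L(B)$. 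Starting from the canonical atom $U$ of length $\mathsf D(G)$ supplied by Theorems~\ref{4.2} and \ref{4.3} and its mirror $U'$ obtained by replacing the rotation generator $\alpha$ by $\alpha^{-1}$ (so $U'$ is again an atom of length $\mathsf D(G)$ via the alternative choice of generators), the sequence $U\bdot U'$ decomposes both as two atoms and, alternatively, as $\mathsf D(G)$ length-two atoms using inverse pairs $(\alpha\bdot\alpha^{-1})$ together with the doubled order-two blocks $\tau^{[2]}$ and $(\alpha^{n/2}\tau)^{[2]}$ in the dihedral case, resp.\ $(\tau\bdot\alpha^m\tau)^{[2]}$ in the dicyclic case. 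Adjoining a third canonical atom $V$ of length $\mathsf D(G)$ and verifying that two of $V$'s terms can be re-paired with residuals of $U\bdot U'$ to form two additional length-two atoms then yields $\{3,\mathsf D(G)+2\}\subseteq \mathsf L(U\bdot U'\bdot V)$.

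For the upper bound (b), suppose $B=W_1\bdot\ldots\bdot W_{2k+1}$ is an atomic factorization, so $|B|\le(2k+1)\mathsf D(G)$. In any other factorization $B=V_1\bdot\ldots\bdot V_M$, every $|V_j|\ge 1$, with equality only when $V_j=1_G$, and each copy of $1_G$ in $B$ must appear as its own atom in every factorization. This gives $M\le(|B|+\mathsf v_{1_G}(B))/2$ and hence the naive bound $M\le(2k+1)\mathsf D(G)/2=k\mathsf D(G)+\mathsf D(G)/2$. The improvement by $1$ is obtained from the observation that the extremal case forces $\mathsf v_{1_G}(B)=0$, $|B|=(2k+1)\mathsf D(G)$, and every $W_i$ to be an atom of length exactly $\mathsf D(G)$, hence of a canonical form classified in Theorems~\ref{4.2} and \ref{4.3}; the distinguished ``tail'' terms ($\tau$ and $\alpha^{n/2}\tau$ in the dihedral case, $\tau^{[2]}$ in the dicyclic case) present in each $W_i$ then create a parity/support obstruction that prevents $B$ from admitting a decomposition into $k\mathsf D(G)+\mathsf D(G)/2$ atoms of length exactly $2$, improving the bound to $M\le k\mathsf D(G)+\mathsf D(G)/2-1$.

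The ``In particular'' statement then follows at once, since $\mathsf D(D_8)=\mathsf D(Q_8)=6$ gives $\mathsf D(G)/2-1=2$, so the lower and upper bounds coincide at $k\mathsf D(G)+2=6k+2$. The main obstacle I expect is in the lower bound: choosing the auxiliary atom $V$ so that its interaction with $U\bdot U'$ produces exactly the two extra length-two atoms required, without inadvertently creating an unwanted shorter factorization. This hinges on a delicate combinatorial inspection of the canonical atoms supplied by Section~\ref{4}.
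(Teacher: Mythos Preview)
Your overall strategy for both bounds aligns with the paper's, but there are gaps in each direction.

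\textbf{Lower bound.} Your plan takes three atoms of length $\mathsf D(G)$, so $|B|=3\mathsf D(G)$, and then asks for a factorization into $\mathsf D(G)+2$ atoms. But $U\bdot U'$ already consumes all of its terms in the $\mathsf D(G)$ length-two blocks you describe, leaving no ``residuals'' for the terms of $V$ to pair with; any repairing would have to break existing pairs and then reassemble the leftover $\mathsf D(G)-2$ terms of $V$ as a single atom, which you never verify. The paper sidesteps this entirely: it keeps your $U$ and $U'$ (called $U$ and $V$ there) but takes the third atom $W$ to have length $4$, not $\mathsf D(G)$. Then $|U\bdot V\bdot W|=2\mathsf D(G)+4$ factors exactly as $\mathsf D(G)+2$ length-two atoms, with no leftover. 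The length-$4$ atoms are $W=\tau\bdot\alpha^{n/2}\tau\bdot\alpha\tau\bdot\alpha^{n/2+1}\tau$ in the dihedral case and $W=(\alpha^m\tau\bdot\alpha^{m+1}\tau)^{[2]}$ in the dicyclic case.

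\textbf{Upper bound.} Your reduction to ``every $W_i$ has length exactly $\mathsf D(G)$'' is only valid when $\mathsf D(G)$ is even (i.e.\ $n\equiv 0\pmod 4$ for $D_{2n}$, or $m$ even for $Q_{4m}$). When $\mathsf D(G)$ is odd, the contradiction hypothesis $\rho_{2k+1}(G)=\lfloor(2k+1)\mathsf D(G)/2\rfloor$ allows $|B|=(2k+1)\mathsf D(G)-1$, meaning one of the $2k{+}1$ atoms may have length $\mathsf D(G)-1$; such an atom is \emph{not} classified by Theorems~\ref{4.2}--\ref{4.3}, so your tail-term parity obstruction does not apply to it. The paper handles this second case separately and it is the harder one: using the invariant $\psi_g(S)=\mathsf v_g(S)-\mathsf v_{g^{-1}}(S)$ and the classification of the $2k$ long atoms, one shows that the short atom $U_{2k+1}$ must split as $T_1\bdot T_2$ with $T_1\in\mathcal F(\langle\alpha\rangle)$ and $T_2\in\mathcal F(G\setminus\langle\alpha\rangle)$ both product-one, contradicting $U_{2k+1}\in\mathcal A(G)$. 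Your sketch contains no mechanism for this case.
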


\begin{proof}
1. Let $n \in \N_{\ge 4}$ be even, and $G = \la \alpha, \tau \t \alpha^{n} = \tau^{2} = 1_G \und \tau\alpha = \alpha^{-1}\tau \ra$.
To show the inequality ($a$), we take three minimal product-one sequences
\[
  U \, = \, \alpha^{[n + \frac{n}{2} - 2]} \bdot \tau \bdot \alpha^{\frac{n}{2}}\tau \,, \quad V \, = \, \big(\alpha^{-1}\big)^{[n+\frac{n}{2}-2]} \bdot \alpha\tau \bdot \alpha^{\frac{n}{2}+1}\tau \,, \und W \, = \, \tau \bdot \alpha^{\frac{n}{2}}\tau \bdot \alpha\tau \bdot \alpha^{\frac{n}{2}+1}\tau
\]
of length $|U| = |V| = \mathsf D (G)$ (Theorem \ref{4.2}) and $|W| = 4$. Then it follows by $\{ 3, \mathsf D (G) + 2 \} \subset \mathsf L (U \bdot V \bdot W)$ that $\mathsf D (G) + 2 \le \rho_3 (G)$, whence we obtain that, for every $k \ge 2$,
\[
  k \mathsf D (G) + 2 \, = \, (k-1) \mathsf D (G) + \big( \mathsf D (G) + 2 \big) \, \le \, \rho_{2k-2} (G) + \rho_3 (G) \, \le \, \rho_{2k+1} (G) \,.
\]

To show the inequality ($b$), we assume to the contrary that $\rho_{2k+1} (G) = \blfloor \frac{(2k+1)\mathsf D (G)}{2} \brfloor$. Then there exist $U_1, \ldots, U_{2k+1} \in \mathcal A (G)$ with $|U_1| \ge \ldots \ge |U_{2k+1}|$ such that $\rho = \rho_{2k+1} (G) \in \mathsf L \big( U_1 \bdot \ldots \bdot U_{2k+1} \big)$. Hence we have that
\[
  U_1 \bdot \ldots \bdot U_{2k+1} \, = \, W_1 \bdot \ldots \bdot W_{\rho} \,,
\]
where $W_1, \ldots, W_{\rho} \in \mathcal A (G)$ with $|W_1| \le \ldots \le |W_{\rho}|$.
Let $H_0 = \la \alpha \ra \setminus \{ 1_G, \alpha^{\frac{n}{2}} \}$. For every $g \in H_0$ and every sequence $S \in \mathcal F (G)$, we define
\[
  \psi_g (S) \, = \, \mathsf v_g (S) - \mathsf v_{g^{-1}} (S) \,.
\]
Then, for every $g \in H_0$, we have $|\psi_g (T)| \le |T|$ and $|\psi_g (W)| = 0$ for sequences $T \in \mathcal F (G)$ and $W \in \mathcal A (G)$ with $|W| = 2$.

\medskip
\noindent
{\bf CASE 1.} \, $|U_1| = \ldots = |U_{2k+1}| = \mathsf D (G)$.
\smallskip

Then we obtain that either $|W_1| = \ldots = |W_{\rho}| = 2$, or else $|W_1| = \ldots = |W_{\rho-1}| = 2$ and $|W_{\rho}| = 3$. Since $2k + 1$ is odd, it follows by Theorem \ref{4.2} that there exists $g_0 \in H_0$ with $\ord(g_0)=n$ such that the absolute value $| \psi_{g_0} (U_1 \bdot \ldots \bdot U_{2k+1}) |$ is $t(\frac{3n}{2} - 2)$ for some $t \in \N$. Since $\psi_{g_0} (W_i) = 0$ for all $i \in [1,\rho-1]$, we obtain that
\[
  4 \, \le \, \Big( \frac{3n}{2} - 2 \Big) \, \le \, | \psi_{g_0} (U_1 \bdot \ldots \bdot U_{2k+1}) | \, = \, | \psi_{g_0} (W_1 \bdot \ldots \bdot W_{\rho}) | \, \le \, | \psi_{g_0} (W_1 \bdot \ldots \bdot W_{\rho - 1}) | + | \psi_{g_0} (W_{\rho}) | \, \le \, 3 \,,
\]
a contradiction.

\medskip
\noindent
{\bf CASE 2.} \, $|U_1| = \ldots = |U_{2k}| = \mathsf D (G)$ and $|U_{2k+1}| = \mathsf D (G) - 1$.
\smallskip

Then we obtain that $|W_1| = \ldots = |W_{\rho}| = 2$ and hence
 \[
  \psi_g (U_1 \bdot \ldots \bdot U_{2k}) + \psi_g (U_{2k+1}) \, = \, \psi_g (U_1 \bdot \ldots \bdot U_{2k+1}) \, = \, \psi_g (W_1 \bdot \ldots \bdot W_{\rho}) \, = \, 0
\]
for every $g \in H_0$. Let $U_{2k+1} = T_1 \bdot T_2$, where $T_1 \in \mathcal F \big( \la \alpha \ra \big)$ and $T_2 \in \mathcal F \big( G \setminus \la \alpha \ra \big)$. If $|T_1| = 0$, then it follows by Proposition \ref{3.2} that $\frac{3n}{2} - 1 = |U_{2k+1}| = |T_2| \le n$, contradicting that $n \ge 4$. If $|T_2| = 0$, then $\mathsf D \big( \la \alpha \ra \big) = n$ ensures that $\frac{3n}{2} - 1 = |U_{2k+1}| = |T_1| \le n$, again a contradiction. Thus $T_1$ and $T_2$ are both non-trivial sequences, and we show that they are product-one sequences to get a contradiction.

First, we prove that $T_1$ is a product-one sequence. Note that $\psi_g (U_{2k+1}) = \psi_g (T_1)$ for all $g \in H_0$. If there exists $g_0 \in H_0$ such that $\psi_{g_0} (T_1) \neq 0$, then $| \psi_{g_0} (U_1 \bdot \ldots \bdot U_{2k}) | = | \psi_{g_0} (T_1) | \ge 1$. Thus Theorem \ref{4.2} ensures that $| \psi_{g_0} (U_1 \bdot \ldots \bdot U_{2k}) | = t(\frac{3n}{2} - 2)$ for some $t \in \N$. Since $|T_2| \ge 2$, it follows that
\[
  \frac{3n}{2} - 1 \, = \, |U_{2k+1}| \, = \, |T_2| + |T_1| \, \ge \, 2 + |\psi_{g_0} (T_1)| \, = \, 2 + t \Big( \frac{3n}{2} - 2 \Big) \, \ge \, \frac{3n}{2} \,,
\]
a contradiction. Thus $\psi_g (U_{2k+1}) = \psi_g (T_1) = 0$ for all $g \in H_0$. Since $\alpha^{\frac{n}{2}} \in \mathsf Z (G)$, we have $\mathsf v_{\alpha^{\frac{n}{2}}} (U) \le 1$ for any $U \in \mathcal A (G)$ with $|U|\ge 3$. Hence Theorem \ref{4.2} ensures that $\alpha^{\frac{n}{2}} \notin \supp(U_i)$ for all $i \in [1,2k]$, and hence $\mathsf v_{\alpha^{\frac{n}{2}}} (U_1 \bdot \ldots \bdot U_{2k+1}) = \mathsf v_{\alpha^{\frac{n}{2}}} (U_{2k+1}) \le 1$. Since $\mathsf v_{\alpha^{\frac{n}{2}}} (W_1 \bdot \ldots \bdot W_{\rho})$ must be even, we obtain $\mathsf v_{\alpha^{\frac{n}{2}}} (U_{2k+1}) = 0$, and therefore $T_1 = \prod^{\bullet}_{i \in [1,|T_1|/2]} (g_i \bdot g^{-1}_i) \in \mathcal B \big( H_0 \big)$.

Next, we show that $T_2$ is a product-one sequence. Let $U_1 \bdot \ldots \bdot U_{2k} = Z_1 \bdot Z_2$, where $Z_1 \in \mathcal F \big( \la \alpha \ra \big)$ and $Z_2 \in \mathcal F \big( G \setminus \la \alpha \ra \big)$. Then Theorem \ref{4.2} implies that
\[
  Z_2 \, = \, V_1 \bdot \ldots \bdot V_{2k} \,,
\]
where for each $i \in [1,2k]$, $V_i = \alpha^{r_i}\tau \bdot \alpha^{\frac{n}{2}+r_i}\tau$ for some $r_i \in [0,n-1]$.
Choose $I \subset [1,2k]$ to be maximal such that $\prod^{\bullet}_{i \in I} V_i$ is a product of minimal product-one sequences of length $2$. Then both $|I|$ and $| [1,2k] \setminus I |$ are even, and thus $Z'_2 = \prod^{\bullet}_{j \in [1,2k] \setminus I} V_j$ is a product-one sequence.

Since $T_1 \bdot Z_1$ is a product of minimal product-one sequences of length $2$, it follows that $T_2 \bdot Z_2$ is also a product of minimal product-one sequences of length $2$. Let $T'_2$ be a subsequence of $T_2$ obtained by deleting all minimal product-one subsequences of length $2$. Then $T'_2 \bdot Z'_2$ is again a product of minimal product-one sequences of length $2$.
Since $T'_2$ and $Z'_2$ are both square-free sequences, we obtain that $T'_2 = Z'_2$ is a product-one sequence, whence $T_2 = \big( T_2 \bdot (T'_2)^{[-1]} \big) \bdot T'_2 \in \mathcal B (G)$.

\smallskip
2. Let $m \ge 2$, and $G = \la \alpha, \tau \t \alpha^{2m} = 1_G, \, \tau^{2} = \alpha^{m}, \und \tau\alpha = \alpha^{-1}\tau \ra$. To show the inequality ($a$), we take three minimal product-one sequences
\[
  U \, = \, \alpha^{[3m-2]} \bdot \tau^{[2]} \,, \quad V \, = \, (\alpha^{-1})^{[3m-2]} \bdot (\alpha\tau)^{[2]} \,, \und W \, = \, (\alpha^{m}\tau \bdot \alpha^{m+1}\tau)^{[2]}
\]
of length $|U| = |V| = \mathsf D (G)$ (Theorem \ref{4.3}) and $|W| = 4$. Then it follows by $\{ 3, \mathsf D (G) + 2 \} \subset \mathsf L (U \bdot V \bdot W)$ that $\mathsf D (G) + 2 \le \rho_3 (G)$, whence we obtain that, for every $k \ge 2$,
\[
  k \mathsf D (G) + 2 \, = \, (k-1) \mathsf D (G) + \big( \mathsf D (G) + 2 \big) \, \le \, \rho_{2k-2} (G) + \rho_3 (G) \, \le \, \rho_{2k+1} (G) \,.
\]

To show the inequality ($b$), we assume to the contrary that $\rho_{2k+1} (G) = \blfloor \frac{(2k+1)\mathsf D (G)}{2} \brfloor$. Then there exist $U_1, \ldots, U_{2k+1} \in \mathcal A (G)$ with $|U_1| \ge \ldots \ge |U_{2k+1}|$ such that $\rho = \rho_{2k+1} (G) \in \mathsf L \big( U_1 \bdot \ldots \bdot U_{2k+1} \big)$. Hence we have that
\[
  U_1 \bdot \ldots \bdot U_{2k+1} \, = \, W_1 \bdot \ldots \bdot W_{\rho} \,,
\]
where $W_1, \ldots, W_{\rho} \in \mathcal A (G)$ with $|W_1| \le \ldots \le |W_{\rho}|$.
Let $H_0 = \la \alpha \ra \setminus \{ 1_G, \alpha^{m} \}$. For every $g \in H_0$ and every sequence $S \in \mathcal F (G)$, we define
\[
  \psi_g (S) \, = \, \mathsf v_g (S) - \mathsf v_{g^{-1}} (S) \,.
\]
Then, for every $g \in H_0$, we have $|\psi_g (T)| \le |T|$ and $|\psi_g (W)| = 0$ for sequences $T \in \mathcal F (G)$ and $W \in \mathcal A (G)$ with $|W| = 2$.

\medskip
\noindent
{\bf CASE 1.} \, $|U_1| = \ldots = |U_{2k+1}| = \mathsf D (G)$.
\smallskip

Then we obtain that either $|W_1| = \ldots = |W_{\rho}| = 2$, or else $|W_1| = \ldots = |W_{\rho-1}| = 2$ and $|W_{\rho}| = 3$. Since $2k+1$ is odd, it follows by Theorem \ref{4.3} that there exists $g_0 \in H_0$ with $\ord(g_0) = 2m$ such that the absolute value $| \psi_{g_0} (U_1 \bdot \ldots \bdot U_{2k+1}) |$ is $t(3m - 2)$ for some $t \in \N$. Since $\psi_{g_0} (W_i) = 0$ for all $i \in [1,\rho-1]$, we obtain that
\[
  4 \, \le \, 3m - 2 \, \le \, | \psi_{g_0} (U_1 \bdot \ldots \bdot U_{2k+1}) | \, = \, | \psi_{g_0} (W_1 \bdot \ldots \bdot W_{\rho}) | \, \le \, | \psi_{g_0} (W_1 \bdot \ldots \bdot W_{\rho - 1}) | + | \psi_{g_0} (W_{\rho}) | \, \le \, 3 \,,
\]
a contradiction.

\medskip
\noindent
{\bf CASE 2.} \, $|U_1| = \ldots = |U_{2k}| = \mathsf D (G)$ and $|U_{2k+1}| = \mathsf D (G) - 1$.
\smallskip

Then we obtain that $|W_1| = \ldots = |W_{\rho}| = 2$, and hence
\[
  \psi_g (U_1 \bdot \ldots \bdot U_{2k}) + \psi_g (U_{2k+1}) \, = \, \psi_g (U_1 \bdot \ldots \bdot U_{2k+1}) \, = \, \psi_g (W_1 \bdot \ldots \bdot W_{\rho}) \, = \, 0
\]
for every $g \in H_0$. Let $U_{2k+1} = T_1 \bdot T_2$, where $T_1 \in \mathcal F \big( \la \alpha \ra \big)$ and $T_2 \in \mathcal F \big( G \setminus \la \alpha \ra \big)$. If $|T_2| = 0$, then $\mathsf D \big( \la \alpha \ra \big) = 2m$ ensures that $3m - 1 = |U_{2k+1}| = |T_1| \le 2m$, a contradiction to $m \ge 2$. Thus $T_2$ is a non-trivial sequence. We show that $T_1$ and $T_2$ are both product-one sequences, and it will be shown that $T_2 \notin \mathcal A (G)$ when $|T_1| = 0$.

First, we prove that $T_1$ is a product-one sequence. Note that $\psi_g (U_{2k+1}) = \psi_g (T_1)$ for all $g \in H_0$. If there exists $g_0 \in H_0$ such that $\psi_{g_0} (T_1) \neq 0$, then $| \psi_{g_0} (U_1 \bdot \ldots \bdot U_{2k}) | = |\psi_{g_0} (T_1)| \ge 1$. Thus Theorem \ref{4.3} ensures that $|\psi_{g_0} (U_1 \bdot \ldots \bdot U_{2k})| = t(3m - 2)$ for some $t \in \N$. Since $|T_2| \ge 2$, it follows that
\[
  3m - 1 \, = \, |U_{2k+1}| \, = \, |T_2| + |T_1| \, \ge \, 2 + |\psi_{g_0} (T_1)| \, = \, 2 + t(3m - 2) \, \ge \, 3m \,,
\]
a contradiction. Thus $\psi_g (U_{2k+1}) = \psi_g (T_1) = 0$ for all $g \in H_0$. Since $\alpha^{m} \in \mathsf Z (G)$, we have $\mathsf v_{\alpha^{m}} (U) \le 1$ for any $U \in \mathcal A (G)$ with $|U| \ge 3$. Hence Theorem \ref{4.3} ensures that $\alpha^{m} \notin \supp(U_i)$ for all $i \in [1,2k]$, and thus $\mathsf v_{\alpha^{m}} (U_1 \bdot \ldots \bdot U_{2k+1}) = \mathsf v_{\alpha^{m}} (U_{2k+1}) \le 1$. Since $\mathsf v_{\alpha^{m}} (W_1 \bdot \ldots \bdot W_{\rho})$ must be even, we obtain $\mathsf v_{\alpha^{m}} (U_{2k+1}) = 0$, and therefore $T_1 = \prod^{\bullet}_{i \in [1,|T_1|/2]} (g_i \bdot g^{-1}_i) \in \mathcal B (H_0)$.

Next, we show that $T_2$ is a product-one sequence, which is not a minimal product-one sequence when $|T_1| = 0$. Let $U_1 \bdot \ldots \bdot U_{2k} = Z_1 \bdot Z_2$, where $Z_1 \in \mathcal F \big( \la \alpha \ra \big)$ and $Z_2 \in \mathcal F \big( G \setminus \la \alpha \ra \big)$. Then Theorem \ref{4.3} implies that
\[
  Z_2 \, = \, V_1 \bdot \ldots \bdot V_{2k} \,,
\]
where for each $i \in [1,2k]$, $V_i = (\alpha^{r_i}\tau)^{[2]}$ for some $r_i \in [0,2m-1]$. Choose $I \subset [1,2k]$ to be maximal such that $\prod^{\bullet}_{i \in I} V_i$ is a product of minimal product-one sequences of length $2$. Then both $|I|$ and $| [1,2k] \setminus I |$ are even, and thus $Z'_2 = \prod^{\bullet}_{j \in [1,2k] \setminus I} V_j$ is a product-one sequence, which is in fact a product of product-one subsequences of length at most $4$.

Since $T_1 \bdot Z_1$ is a product of minimal product-one sequences of length $2$, it follows that $T_2 \bdot Z_2$ is also a product of minimal product-one sequences of length $2$. Let $T'_2$ be a subsequence of $T_2$ obtained by deleting all minimal product-one subsequences of length $2$. Then $T'_2 \bdot Z'_2$ is again a product of minimal product-one sequences of length $2$. Since both $T'_2$ and $Z'_2$ have no product-one subsequences of length $2$ and $\alpha^{m} \in \mathsf Z (G)$, it follows that $1_G \in \pi(Z'_2) = \pi(T'_2)$, whence $T_2 = \big( T_2 \bdot (T'_2)^{[-1]} \big) \bdot T'_2 \in \mathcal B (G)$. 
To conclude the proof, we may assume that $|T_1| = 0$. Then $U_{2k+1} = T_2$, and it follows that either that $T'_2$ is trivial, or that $U_{2k+1} = T'_2$. In the former case, $U_{2k+1}$ is a product of product-one subsequences of length $4$ (as this is the case for $Z'_2$ with the terms of $Z'_2$ and $T'_2$ pairing up), so $U_{2k+1} \in \mathcal A (G)$ forces $3m - 1 = |U_{2k+1}| \le 4$, contradicting that $m \ge 2$. In the latter case, $U_{2k+1}$ is a product of product-one sequences of length $2$ by definition of $T'_2$, whence $U_{2k+1} \in \mathcal A (G)$ forces $3m - 1 = |U_{2k+1}| \le 2$, again a contradiction.
\end{proof}


\providecommand{\bysame}{\leavevmode\hbox to3em{\hrulefill}\thinspace}
\providecommand{\MR}{\relax\ifhmode\unskip\space\fi MR }
\providecommand{\MRhref}[2]{%
  \href{http://www.ams.org/mathscinet-getitem?mr=#1}{#2}
}
\providecommand{\href}[2]{#2}

\end{document}